\newtheorem{theorem}{Theorem}
\newtheorem{lemma}{Lemma}
\newtheorem{definition}{Definition}
\newtheorem{proposition}{Proposition}
\newtheorem{corollary}{Corollary}
\newcommand{\co}{{\hspace{0.25mm}:\hspace{0.25mm}}}
\begin{document}
{\selectlanguage{english}
\binoppenalty = 10000 %
\relpenalty   = 10000 %

\pagestyle{headings} \makeatletter
\renewcommand{\@evenhead}{\raisebox{0pt}[\headheight][0pt]{\vbox{\hbox to\textwidth{\thepage\hfill \strut {\small Grigory. K. Olkhovikov}}\hrule}}}
\renewcommand{\@oddhead}{\raisebox{0pt}[\headheight][0pt]{\vbox{\hbox to\textwidth{{Completeness for explicit jstit logic}\hfill \strut\thepage}\hrule}}}
\makeatother

\title{Explicit justification stit logic: a completeness result}
\author{Grigory K. Olkhovikov\\Ruhr University, Bochum \\
Department of Philosophy II; Room NAFO 02/299 \\
Universit\"atsstra{\ss}e 150 \\
D--44780 Bochum, Germany \\
Tel.: +4915123707030\\
Email: grigory.olkhovikov@\{rub.de, gmail.com\}}
\date{}
\maketitle
\begin{quote}
\textbf{Abstract}. We consider the explicit fragment of the basic
justification stit logic introduced in \cite{OLWA}. We define a
Hilbert-style axiomatic system for this logic and show that this
system is strongly complete relative to the intended semantics.
\end{quote}

\begin{quote}
stit logic, justification logic, completeness, compactness
\end{quote}

\section{Introduction}
Basic justification stit (or jstit, for short) logic was
introduced in \cite{OLWA} as an environment for analysis of
doxastic actions related to proving activity within a somewhat
idealized community of agents. This logic combines expressive
means of stit logic by N. Belnap et al. \cite{belnap2001facing}
with those of justification logic by S. Artemov et al.
\cite{ArtemovN05}. The two latter logics provide for the pure
agency side and the pure proof ontology side of the proving
activity, respectively, so that it is assumed that doing something
is in effect seeing to it that something is the case, and that
every actual proof can be understood as a realization of some
proof polynomial from justification logic. The only missing
element in this picture is then the link between the two
components, i.e. how agents can see to it that a proof is
realized. Such a realization may come in different forms,
researchers may, for example, exchange emails or put the proofs
they have found on a common whiteboard. In basic jstit logic this
rather common situation is idealized in that only public proving
activity of agents is taken into account. In other words, taking
up the whiteboard metaphor, the agents in question can only
participate in proving activity by putting their proofs on the
common whiteboard for everyone to see, and not by sending one
another private messages or scribbling in their private notebooks.

In order to represent the proving activity of agents under the
above-described set of assumptions, basic jstit logic features a
set of four new modalities which were called in \cite{OLWA}
\emph{proving modalities}. Proving modalities capture four
different modes in which one can speak about proving activity of
an agent. The idea is that one gets a right classification of such
modes if one intersects the distinction between agentive and
factual (aka moment-determinate) events developed in stit logic
with the distinction between explicit and implicit modes of
knowledge which is central to justification logic. The first
distinction, when applied to proofs, corresponds to a well-known
philosophical discussion of proofs-as-objects vs proofs-as-acts.
One refers to a proof-as-act when one says that agent $j$ proves
some proposition $A$, but one refers to a proof-as-object when
saying that $A$ was proved. While doing that, one can either
simply say that $A$ was proved, or add that $A$ was proved by some
proof $t$; and the difference between these two modes of speaking
is exactly the difference between implicit and explicit reference
to proofs. The resulting classification of proving modalities
looks then as follows:

\begin{center}
 \begin{tabular}{|c||c|c|}
 \hline
  & Agentive & Moment-determinate \\
 \hline\hline
 Explicit & $j$ proves $A$ by $t$ & $A$ has been proven by $t$ \\
 & $Prove(j, t, A)$& $Proven(t,A)$\\
 \hline
 Implicit & $j$ proves $A$ & $A$ has been proven \\
 &$Prove(j, A)$& $Proven(A)$\\
 \hline
\end{tabular}
\end{center}

In \cite{OLWA} the semantics of these modalities was presented and
informally motivated in some detail. However, axiomatizing basic
jstit logic proved to be an uphill task. The first partial success
was achieved in \cite{OLimpl} where an axiomatization of implicit
fragment of basic jstit logic (obtained by omitting the two
explicit proving modalities of the above table) was presented and
shown to be complete. In the present paper we focus on this
omitted set of proving modalities and look into what happens when
one omits the implicit proving modalities from the basic jstit
logic and keeps the explicit ones. The resulting system, which
will be called here the \emph{explicit jstit logic}, complements,
in a sense, the implicit jstit logic to the full set of proving
modalities. In this paper we will axiomatize this logic and thus
complement the main result of \cite{OLimpl} with a similar result
for the explicit proving modalities.

The layout of the rest of the paper is then as follows. In Section
\ref{basic} we define the language and the semantics of the logic
at hand. We then connect explicit jstit logic to JA-STIT, the stit
logic of justification announcements. JA-STIT was the subject of
some of our earlier papers (see \cite{OLexpl-gen}) and is a proper
extension of explicit jstit logic in terms of expressive power.
The proof of the main result of the current paper displays some
very close parallels to the completeness proof for JA-STIT given
in \cite{OLexpl-gen}, to the point that we can re-use a dozen of
technical lemmas proven in \cite{OLexpl-gen} without altering one
letter in their respective proofs. We also give some facts about
expressivity of explicit jstit logic, namely, we mention the
failure finite model properties and show that this logic, just as
JA-STIT, can tell the difference between the general class of its
intended models and the subclass of models based on discrete time.

The strongly complete axiomatization for explicit jstit logic is
then presented in Section \ref{axioms}. We immediately show this
system to be sound w.r.t. the semantics introduced in Section
\ref{basic}, and we end the section with a proof for a number of
theorems of the system and a note on an alternative axiomatization
of the same set of theorems.

Section \ref{canonicalmodel} then contains the bulk of technical
work necessary for the completeness theorem. It gives a stepwise
construction and adequacy check for all the numerous components of
the canonical model and ends with a proof of a truth lemma. This
section displays the highest degree of dependency on lemmas proved
in \cite{OLexpl-gen} and we give a table connecting the lemmas of
this section to the results of \cite{OLexpl-gen}. Section
\ref{main} then wraps up, giving a concise proof of the
completeness result, drawing some conclusions and drafting
directions for future work.

In what follows we will be assuming, due to space limitations, a
basic acquaintance with both stit logic and justification logic.
We recommend to peruse \cite[Ch. 2]{horty2001agency} for a quick
introduction to the basics of stit logic, and
\cite{sep-logic-justification} for the same w.r.t. justification
logic.

\section{Basic definitions and notation}\label{basic}

\subsection{Language}

We fix some preliminaries. First we choose a finite set $Ag$
disjoint from all the other sets to be defined below. Individual
agents from this set will be denoted by letters $i$ and $j$. Then
we fix countably infinite sets $PVar$ of proof variables (denoted
by $x,y,z$) and $PConst$ of proof constants (denoted by $c,d$).
When needed, subscripts and superscripts will be used with the
above notations or any other notations to be introduced in this
paper. Set $Pol$ of proof polynomials is then defined by the
following BNF:
$$
t := x \mid c \mid s + t \mid s \times t \mid !t,
$$
with $x \in PVar$, $c \in PConst$, and $s,t$ ranging over elements
of $Pol$. In the above definition $+$ stands for the \emph{sum} of
proofs, $\times$ denotes \emph{application} of its left argument
to the right one, and $!$ denotes the so-called
\emph{proof-checker}, so that $!t$ checks the correctness of proof
$t$.

In order to define the set $Form^{Ag}$ of formulas we fix a
countably infinite set $Var$ of propositional variables to be
denoted by letters $p,q$. Formulas themselves will be denoted by
letters $A,B,C,D$, and the definition of $Form^{Ag}$ is supplied
by the following BNF:
\begin{align*}
A := p \mid A \wedge B \mid \neg A \mid [j]A \mid \Box A \mid t\co
A \mid KA \mid Prove(j,t,A) \mid Proven(t,A),
\end{align*}
with $p \in Var$, $j \in Ag$ and $t \in Pol$.

It is clear from the above definition of $Form^{Ag}$ that we are
considering a version of modal propositional language. As for the
informal interpretations of modalities, $[j]A$ is the so-called
cstit action modality and $\Box$ is the historical necessity
modality, both modalities are borrowed from stit logic. The next
two modalities, $KA$ and $t\co A$, come from justification logic
and the latter is interpreted as ``$t$ proves $A$'', whereas the
former is the strong epistemic modality ``$A$ is known''.

We assume $\Diamond$ and $\langle j \rangle$ as notations for the
dual modalities of $\Box$ and $[j]$, respectively. As usual,
$\omega$ will denote the set of natural numbers including $0$,
ordered in the natural way.

\subsection{Semantics}

For the language at hand, we assume the following semantics. A
justification stit (jstit, for short) model for a given agent
community $Ag$ is a structure of the form:
$$
\mathcal{M} = \langle Tree, \unlhd, Choice, Act, R, R_e,
\mathcal{E}, V\rangle
$$
such that:
\begin{enumerate}
\item $Tree$ is a non-empty set. Elements of $Tree$ are called
\emph{moments}.

\item $\unlhd$ is a partial order on $Tree$ for which a temporal
interpretation is assumed. We will also freely use notations like
$\unrhd$, $\lhd$, and $\rhd$ to denote the inverse relation and
the irreflexive companions.\footnote{A more common notation $\leq$
is not convenient for us since we also widely use $\leq$ in this
paper to denote the natural order relation between elements of
$\omega$.}

\item $Hist$ is a set of maximal chains in $Tree$ w.r.t. $\unlhd$.
Since $Hist$ is completely determined by $Tree$ and $\unlhd$, it
is not included into the structure of model as a separate
component. Elements of $Hist$ are called \emph{histories}. The set
of histories containing a given moment $m$ will be denoted $H_m$.
The following set:
$$
MH(\mathcal{M}) = \{ (m,h)\mid m \in Tree,\, h \in H_m \},
$$
called the set of \emph{moment-history pairs}, will be used to
evaluate formulas of the above language.

\item $Choice$ is a function mapping $Tree \times Ag$ into
$2^{2^{Hist}}$ in such a way that for any given $j \in Ag$ and $m
\in Tree$ we have as $Choice(m,j)$ (to be denoted as $Choice^m_j$
below) a partition of $H_m$. For a given $h \in H_m$ we will
denote by $Choice^m_j(h)$ the element of partition $Choice^m_j$
containing $h$.

\item $Act$ is a function mapping $MH(\mathcal{M})$ into
$2^{Pol}$.

\item $R$ and $R_e$ are two pre-order on $Tree$ giving two
versions of epistemic accessibility relation. They are assumed to
be connected by inclusion $R \subseteq R_e$.

\item $\mathcal{E}$ is a function mapping $Tree \times Pol$ into
$2^{Form^{Ag}}$ called \emph{admissible evidence function}.

\item $V$ is an evaluation function, mapping the set $Var$ into
$2^{MH(\mathcal{M})}$.
\end{enumerate}

A structure of the above described type is admitted as a jstit
model iff it satisfies the following list of additional
constraints. In order to facilitate their exposition, we introduce
a couple of useful notations first. For a given $m \in Tree$ and
any given $h, g \in H_m$ we stipulate that:
$$
Act_m := \bigcap_{h \in H_m}Act(m,h);\qquad\quad Act_{(m,h,j)} :=
\bigcap_{g \in Choice^m_j(h)}Act(m,g);
$$
and:
$$
h \approx_m g \Leftrightarrow (\exists m' \rhd m)(h, g \in
H_{m'}).
$$
Whenever we have $h \approx_m g$, we say that $h$ and $g$ are
\emph{undivided} at $m$.

We then demand satisfaction of the following constraints by the
jstit models:

\begin{enumerate}
\item Historical connection:
$$
(\forall m,m_1 \in Tree)(\exists m_2 \in Tree)(m_2 \unlhd m \wedge
m_2 \unlhd m_1).
$$

\item No backward branching:
$$
(\forall m,m_1,m_2 \in Tree)((m_1 \unlhd m \wedge m_2 \unlhd m)
\Rightarrow (m_1 \unlhd m_2 \vee m_2 \unlhd m_1)).
$$

\item No choice between undivided histories:
$$
(\forall m \in Tree)(\forall h,h' \in H_m)(h \approx_m h'
\Rightarrow Choice^m_j(h) = Choice^m_j(h'))
$$
for every $j \in Ag$.

\item Independence of agents:
$$
(\forall m\in Tree)(\forall f:Ag \to 2^{H_m})((\forall j \in
Ag)(f(j) \in Choice^m_j) \Rightarrow \bigcap_{j \in Ag}f(j) \neq
\emptyset).
$$

\item Monotonicity of evidence:
$$
(\forall t \in Pol)(\forall m,m' \in Tree)(R_e(m,m') \Rightarrow
\mathcal{E}(m,t) \subseteq \mathcal{E}(m',t)).
$$

\item Evidence closure properties. For arbitrary $m \in Tree$,
$s,t \in Pol$ and $A, B \in Form^{Ag}$ it is assumed that:
\begin{enumerate}
\item $A \to B \in \mathcal{E}(m,s) \wedge A \in \mathcal{E}(m,t)
\Rightarrow B \in \mathcal{E}(m,s\times t)$;

\item $\mathcal{E}(m,s) \cup \mathcal{E}(m,t) \subseteq
\mathcal{E}(m,s + t)$. \item $A \in \mathcal{E}(m,t) \Rightarrow
t:A \in \mathcal{E}(m,!t)$;
\end{enumerate}

\item Expansion of presented proofs:
$$
(\forall m,m' \in Tree)(m' \lhd m \Rightarrow \forall h \in H_m
(Act(m',h) \subseteq Act(m,h))).
$$

\item No new proofs guaranteed:
$$
(\forall m \in Tree)(Act_m \subseteq \bigcup_{m' \lhd m, h \in
H_m}(Act(m',h))).
$$

\item Presenting a new proof makes histories divide:
$$
(\forall m \in Tree)(\forall h,h' \in H_m)(h \approx_m h'
\Rightarrow (Act(m,h) = Act(m,h'))).
$$

\item Future always matters:
$$
\unlhd \subseteq R.
$$

\item Presented proofs are epistemically transparent:
$$
(\forall m,m' \in Tree)(R_e(m,m') \Rightarrow (Act_m \subseteq
Act_{m'})).
$$
\end{enumerate}

We offer some intuitive explanation for the above-defined notion
of jstit model. Due to space limitations, we only explain the
intuitions behind jstit models very briefly, and we urge the
reader to consult \cite[Section 3]{OLWA} for a more comprehensive
explanations, whenever needed.

The components like $Tree$, $\unlhd$, $Choice$ and $V$ are
inherited from stit logic, whereas $R$, $R_e$ and $\mathcal{E}$
come from justification logic. The only new component is $Act$
which represents the above-mentioned common pool of proofs
demonstrated to the community at any given moment under a given
history. When interpreting $Act$, we invoke the classical stit
distinction between dynamic (agentive) and static
(moment-determinate) entities, assuming that the presence of a
given proof polynomial $t$ on the community whiteboard only
becomes an accomplished fact at $m$ when $t$ is present in
$Act(m,h)$ for \emph{every} $h \in H_m$. On the other hand, if $t$
is in $Act(m,h)$ only for \emph{some} $h \in H_m$ this means that
$t$ is rather in a dynamic state of \emph{being presented}, rather
than being present, to the community.

The numbered list of semantical constraints above then just builds
on these intuitions. Constraints $1$--$4$ are borrowed from stit
logic, constraints $5$ and $6$ are inherited from justification
logic. Constraint $7$ just says that nothing gets erased from the
whiteboard, constraint $8$ says a new proof cannot spring into
existence as a static (i.e. moment-determinate) feature of the
environment out of nothing, but rather has to come as a result (or
a by-product) of a previous activity. Constraint $9$ is just a
corollary to constraint $3$ in the richer environment of jstit
models, constraint $10$ says that the possible future of the given
moment is always epistemically relevant in this moment, and
constraint $11$ says that the community knows everything that has
firmly made its way onto the whiteboard.

Right away we establish some elementary facts about jstit models
to be used in what follows:
\begin{lemma}\label{technical}
Let $\mathcal{M} = \langle Tree, \unlhd, Choice, Act, R, R_e,
\mathcal{E}, V\rangle$ be a jstit model. Then:

\begin{enumerate}
\item $(\forall m \in Tree)(\forall h \in H_m)(\{ m_1 \in Tree
\mid m_1 \unlhd m \} \subseteq h)$;

\item $(\forall m \in Tree)(\forall h,g \in H_m)(h \neq g
\Rightarrow (\exists m'' \rhd m)(h \in H_{m''}))$;

\item $(\forall m, m' \in Tree)(m \unlhd m' \Rightarrow H_{m'}
\subseteq H_m)$.
\end{enumerate}
\end{lemma}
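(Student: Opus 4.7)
The three claims all rest on the interaction between maximality of histories as chains and the no-backward-branching condition, so the plan is to prove (1) first, then derive (3) as an immediate corollary, and finally use both to prove (2).

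For (1), the strategy is a standard chain-maximality argument. Fix $m \in Tree$ and $h \in H_m$, and suppose toward contradiction that $m_1 \unlhd m$ lies outside $h$. Since $h$ is maximal among chains under $\unlhd$, if $m_1$ were comparable with every element of $h$ then $h \cup \{m_1\}$ would still be a chain properly extending $h$, which is impossible; hence some $m' \in h$ is $\unlhd$-incomparable with $m_1$. But $m, m' \in h$ gives $m' \unlhd m$ or $m \unlhd m'$. In the first case, $m_1$ and $m'$ both lie below $m$, so the no-backward-branching constraint forces them to be comparable. In the second case, transitivity gives $m_1 \unlhd m'$. Either way we contradict incomparability, so $m_1 \in h$.

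For (3), given $m \unlhd m'$ and any $h \in H_{m'}$, part (1) applied at $m'$ yields $m \in h$, i.e.\ $h \in H_m$, so $H_{m'} \subseteq H_m$.

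For (2), assume $h, g \in H_m$ with $h \neq g$. Pick any $m' \in h \setminus g$ (swapping the roles of $h$ and $g$ if needed; the conclusion we need is about $h$, so we can run the argument on whichever side has a witness after establishing the relevant $m'' \rhd m$). Since $m, m' \in h$ they are $\unlhd$-comparable and distinct (as $m \in g$ but $m' \notin g$). If $m' \unlhd m$, then part (1) applied to $g \in H_m$ forces $m' \in g$, contradicting the choice of $m'$. Hence $m \lhd m'$, and since $m' \in h$ we have $h \in H_{m'}$, so $m'' := m'$ witnesses the claim. The main subtlety to watch is ensuring the witness lies on the correct history $h$ rather than on $g$; taking the difference in the direction $h \setminus g$ (nonempty because $h \neq g$ combined with part (1) would otherwise force $h = g$ via symmetric inclusion, but more directly just by $h \neq g$ and considering both differences) handles this cleanly. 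No deep obstacle is expected; the only delicate point is invoking maximality of $h$ correctly in part (1), which is where the bulk of the argument lives.
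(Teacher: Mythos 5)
Your proof is correct and follows essentially the same route as the paper's: Part 1 by chain maximality combined with no backward branching, Part 3 as an instance of Part 1, and Part 2 by using Part 1 to show that an element of $h \setminus g$ must lie strictly $\rhd m$ (the paper phrases this step contrapositively, assuming no such $m''$ exists and deriving $h \subseteq g$). The one point to state more carefully is why $h \setminus g \neq \emptyset$: this follows from the maximality of $h$ as a chain (if $h \subseteq g$ then $h = g$), not merely from $h \neq g$ or from ``considering both differences,'' since the conclusion is asymmetric in $h$ and $g$ and you genuinely need the witness on the $h$ side.
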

\begin{proof}
(Part 1). We clearly have $m \in h$. Consider an arbitrary $m_1
\lhd m$. Then $h \cup \{m_1\}$ must be an $\unlhd$-chain. Indeed,
if $m' \in h$ then either $m \unlhd m'$ or $m'\lhd m$. In the
former case we get $m_1 \unlhd m'$ by transitivity of $\unlhd$, in
the latter case we get $m_1 \unlhd m' \vee m' \unlhd m_1$ by the
absence of backward branching. But since $h$ is a maximal chain,
this means that we must have $m_1 \in h$.

(Part 2). To obtain a contradiction, assume that $h,g \in H_m$ are
different, but we have:
\begin{equation}\label{E:eq}
(\forall m'' \rhd m)(h \notin H_{m''}).
\end{equation}
Given that every two elements of $h$ must be $\unlhd$-comparable,
this means that $h \subseteq \{ m_1 \in Tree \mid m_1 \unlhd m \}$
and, by Part 1, that  $h = \{ m_1 \in Tree \mid m_1 \unlhd m \}$.
Note that Part 1 also entails that $g \supseteq \{ m_1 \in Tree
\mid m_1 \unlhd m \}$, so that in this case we must have $g
\supseteq h$. We can have neither $g \supset h$, since this would
violate the maximality of $h$, nor $g = h$, since this is in
contradiction with our assumption. Therefore, \eqref{E:eq} must be
false.

(Part 3). Immediately by the absence of backward branching.
\end{proof}

For the members of $Form^{Ag}$, we will assume the following
inductively defined satisfaction relation. For every jstit model
$\mathcal{M} = \langle Tree, \unlhd, Choice, Act, R, R_e,
\mathcal{E}, V\rangle$ and for every $(m,h) \in MH(\mathcal{M})$
we stipulate that:
\begin{align*}
&\mathcal{M}, m, h \models p \Leftrightarrow (m,h) \in
V(p);\\
&\mathcal{M}, m, h \models [j]A \Leftrightarrow (\forall h'
\in Choice^m_j(h))(\mathcal{M}, m, h' \models A);\\
&\mathcal{M}, m, h \models \Box A \Leftrightarrow (\forall h'
\in H_m)(\mathcal{M}, m, h' \models A);\\
&\mathcal{M}, m, h \models KA \Leftrightarrow \forall m'\forall
h'(R(m,m') \& h' \in H_{m'} \Rightarrow \mathcal{M}, m', h'
\models A);\\
&\mathcal{M}, m, h \models t\co A \Leftrightarrow A \in
\mathcal{E}(m,t) \& (\forall m' \in Tree)(\forall h' \in H_{m'})
(R_e(m,m') \Rightarrow \mathcal{M}, m', h'
\models A);\\
&\mathcal{M}, m, h \models Prove(j, t, A) \Leftrightarrow t \in
Act_{(m,h,j)} \& \mathcal{M},m,h \models t\co A \& t \notin
Act_m;\\
&\mathcal{M}, m, h \models Proven(t, A) \Leftrightarrow t \in
Act_m \& \mathcal{M}, m, h \models t\co A
\end{align*}
In the above clauses we assume that $p \in Var$; we also assume
standard clauses for Boolean connectives.

Explicit jstit logic is closely connected to JA-STIT, the stit
logic of justification announcement. In JA-STIT the two proving
modalities of explicit jstit logic are replaced with a single
modality $Et$ for $t \in Pol$, with the following semantics:
$$
\mathcal{M}, m, h \models Et \Leftrightarrow t \in Act(m,h).
$$

Since JA-STIT is interpreted over the same class of models as
basic jstit logic, it turns out that one can retrieve the explicit
proving modalities in JA-STIT using the following definitions:
$$
Prove(j,t,A) =_{df} [j]Et \wedge \Diamond\neg Et \wedge t\co
A;\qquad\quad Proven(t,A) =_{df} \Box Et \wedge t\co A.
$$
On the other hand, it is easy to show that $Et$ cannot be defined
in explicit jstit logic, so that JA-STIT is its proper extension.
Despite the difference in expressive powers, it was possible to
borrow many constructions and lemmas for the main result of this
paper directly from the completeness proof for JA-STIT without any
modifications at all.

We observe that even though all elements of $Form^{Ag}$ are
interpreted over moment-history pairs, for some of them their
evaluations are obviously independent from the history component:
\begin{lemma}\label{determinate}
For every agent community $Ag$, every $A \in Form^{Ag}$ and every
$t \in Pol$, all of the formulas $\Box A$, $KA$, $t\co A$ and
$Proven(t, A)$ are moment-determinate, that is to say, if $\alpha
\in \{ \Box A, KA, t\co A,Proven(A) \}$, then for an arbitrary
jstit model

\noindent$\mathcal{M} = \langle Tree, \unlhd, Choice, Act, R,R_e,
\mathcal{E}, V\rangle$, arbitrary $m \in Tree$ and $h, h' \in H_m$
we have:
$$
\mathcal{M},m,h \models \alpha \Leftrightarrow \mathcal{M},m,h'
\models \alpha.
$$
Also, Boolean combinations of these formulas are
moment-determinate.
\end{lemma}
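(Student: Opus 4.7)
The plan is essentially to inspect the satisfaction clauses and note that, after rewriting, none of them mentions the history parameter $h$ on the right-hand side.

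First I would handle the four atomic cases in sequence. For $\Box A$ the right-hand side quantifies over all $h' \in H_m$, so $h$ itself does not appear; hence the truth value depends only on $m$. For $KA$, the right-hand side is $\forall m' \forall h'\,(R(m,m')\, \&\, h'\in H_{m'} \Rightarrow \mathcal{M},m',h'\models A)$, which again involves only $m$ (through $R$). For $t\co A$, the clause is the conjunction of $A \in \mathcal{E}(m,t)$ and a universal statement over $R_e$-successors of $m$; neither conjunct mentions $h$. So in all three cases moment-determinacy is immediate from the definition.

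For $Proven(t,A)$ a tiny extra step is needed. The clause says $t \in Act_m\, \&\, \mathcal{M},m,h \models t\co A$. The first conjunct, $Act_m = \bigcap_{h\in H_m} Act(m,h)$, is defined purely in terms of $m$. The second conjunct has just been shown to be moment-determinate. So for $h,h' \in H_m$ we get $\mathcal{M},m,h\models Proven(t,A) \iff \mathcal{M},m,h'\models Proven(t,A)$.

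Finally, the closure under Boolean combinations is a routine induction on the structure of the combination: if $\alpha$ and $\beta$ are both moment-determinate at $m$, then so are $\neg\alpha$ and $\alpha\wedge\beta$, since negation and conjunction commute with the biconditional across histories in $H_m$. I do not expect any real obstacle here; the statement is essentially a direct reading of the semantic clauses, and the only mild subtlety is that $Proven(t,A)$ must be handled after $t\co A$ so that one can appeal to the previously established case.
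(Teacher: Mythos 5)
Your proposal is correct and follows essentially the same route as the paper's own proof: observe that the satisfaction clauses for $\Box A$, $KA$, and $t\co A$ contain no free occurrence of $h$, then reduce $Proven(t,A)$ to the already-handled $t\co A$ case (the $Act_m$ conjunct being history-independent by definition), and close under Boolean combinations. No gaps.
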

\begin{proof}
For $\alpha \in \{ \Box A, KA, t\co A\}$ it suffices to note that
their respective satisfaction conditions at a given $(m,h) \in
MH(\mathcal{M})$ in a given $\mathcal{M}$ have no free occurrences
of $h$. When we turn, further, to the corresponding condition for
$Proven(t, A)$, the only free occurrence of $h$ will be within the
context $\mathcal{M}, m, h \models t\co A$ which was shown to be
moment-determinate.

Of course, Boolean combinations of moment-determinate formulas
must be moment-determinate, too.
\end{proof}

It follows from Lemma \ref{determinate} that we might as well omit
the histories when discussing satisfaction of such formulas and
write $\mathcal{M}, m \models KA$ instead of $\mathcal{M}, m, h
\models KA$, etc.

One can in principle simplify the above semantics by introducing
the additional constraint that $R_e \subseteq R$. This leads to a
collapse of the two epistemic accessibility relation into one.
Therefore, we will call jstit models satisfying $R_e \subseteq R$
\emph{unirelational jstit models}. It is known that such a
simplification in the context of pure justification logic does not
affect the set of theorems (see, e.g. \cite{fittingreport} and
\cite[Comment 6.5]{ArtemovN05}), and we have shown in
\cite{OLexpl-gen} that this is also the case for JA-STIT. The main
result of this paper will show that also in this respect the
explicit jstit logic follows the suit. In fact, the canonical
model to be constructed in our completeness proof below is
unirelational. In view of this, we offer some comments as to the
simplifications of semantics available in the unirelational
setting.

We observe that one can equivalently define a unirelational jstit
model as a structure $\mathcal{M} = \langle Tree, \unlhd, Choice,
Act, R, \mathcal{E}, V\rangle$ satisfying all the constraints for
the jstit models, except that in the numbered constraints one
substitutes $R$ for $R_e$. Also, in the context of unirelational
jstit models, it is possible to simplify the satisfation clause
for $t\co A$ as follows:
$$
    \mathcal{M}, m, h \models t\co A \Leftrightarrow A \in
\mathcal{E}(m,t) \& \mathcal{M}, m, h \models KA.
$$

\subsection{Concluding remarks}
Before we start with the task of axiomatizing the explicit jstit
logic, we briefly mention some facts about its expressive powers
which are relevant to our chosen format of completeness proof.
Firstly, it is worth noting that under the presented semantics
some satisfiable formulas cannot be satisfied over finite models,
or even over infinite models where all histories are finite. The
argument for this is the same as in implicit fragment of basic
jstit logic, for which this claim was proved in \cite{OLimpl}
using $K(\Diamond p \wedge \Diamond\neg p)$ as an example of a
formula which is satisfiable over jstit models but not over jstit
models with finite histories. This already rules out some methods
of proving completeness like filtration method.

Second, we mention that, just like JA-STIT, explicit jstit logic
has enough expressive power to tell the difference between the
general class of jstit models and its subclass of jstit models
based on discrete time. To be more precise, we define that a jstit
model $\mathcal{M}$ is \emph{based on discrete time} iff every
chain in $Hist(\mathcal{M})$ is isomorphic to an initial segment
of $\omega$, the set of natural numbers. Then it can be shown
that:

\begin{proposition}\label{proposition}
Let $Ag$ be an agent community. The subset of
$Form^{Ag}$-validities over the class of (unirelational) jstit
models for $Ag$ is a \textbf{proper} subset of the set of
$Form^{Ag}$-validities over the class of (unirelational) jstit
models for the same community based on discrete time.
\end{proposition}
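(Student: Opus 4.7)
The plan is to split the statement into the routine inclusion and the nontrivial properness claim. For the inclusion, every jstit model based on discrete time is in particular a jstit model, so any $A \in Form^{Ag}$ valid over all (unirelational) jstit models is automatically valid over the discrete-time subclass; this direction requires nothing beyond unpacking definitions.

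The substance of the proposition is the strictness of the inclusion, for which I would exhibit a separating formula $\varphi \in Form^{Ag}$ together with a refuting jstit model. The natural way to design $\varphi$ is to exploit the interaction between the dynamic $Prove(j,t,A)$ and the static $Proven(t,A)$ in combination with the semantic constraints $7$, $8$, and $11$: these ensure that a proof polynomial $t$ can enter the moment-determinate pool $Act_m$ only as the outcome of some earlier act of presenting. In a model based on discrete time every moment at which $Proven(t,A)$ holds has a unique immediate $\unlhd$-predecessor on each history passing through it, and this predecessor must already carry the corresponding act of $Prove$; this is what lets one force a well-foundedness-like statement --- roughly, ``if $t$ ever becomes proven then it becomes proven for the first time somewhere'' --- to hold throughout the discrete-time subclass. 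The technical task is to encode such a minimality principle using only $\Box$, $\Diamond$, $K$, $Prove$ and $Proven$; I expect the object-language formulation to be an implication whose antecedent is $\Diamond Proven(t,A)$ and whose consequent asserts the existence of a witnessing moment immediately preceded by an act of $Prove(j,t,A)$, following the blueprint of the analogous JA-STIT formula in \cite{OLexpl-gen}.

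To refute the chosen $\varphi$ in the full class, I would construct a jstit model containing a moment $m_\infty$ on some history $h$ with no immediate $\unlhd$-predecessor along $h$, and arrange $Act$ so that $t$ enters $Act_{m_\infty}$ only at the limit, without ever stabilising at any strictly earlier $\lhd$-point. The main obstacle here is that \emph{expansion of presented proofs} and \emph{no new proofs guaranteed} pull in opposite directions at a limit point, so verifying that all eleven jstit constraints are simultaneously respected --- and in particular that the $Act$-values on all histories through $m_\infty$ cohere with the prescribed $Choice$ partitions --- will demand the same care as the JA-STIT counterexample in \cite{OLexpl-gen}. Once the refuting model is in place, the moment-history pair $(m_\infty,h)$ witnesses that $\varphi$ fails on the general class, while the discrete-time validity of $\varphi$ established in the previous step rules out any such refutation within the discrete-time subclass, yielding the proper inclusion.
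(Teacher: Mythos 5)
Your overall strategy is the only reasonable one and coincides with the paper's: the inclusion is trivial, and properness requires a single separating formula together with a dense-time refuting model. You also correctly identify the relevant constraints (expansion of presented proofs, no new proofs guaranteed, epistemic transparency) and the $Prove$/$Proven$ interaction as the engine. But the two objects that actually carry the proof are never produced, and the shapes you describe for them would not work. The language of explicit jstit logic has no temporal operators: $\Diamond$ is historical possibility at the \emph{same} moment, and the only way to reach other moments is through $K$ (via $\unlhd\subseteq R$), which is a universal modality. So a consequent that ``asserts the existence of a witnessing moment immediately preceded by an act of $Prove(j,t,A)$'' is simply not expressible; neither ``some future moment'' nor ``immediate predecessor'' can be written down. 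The formula the paper uses is
$K(\neg Proven(x,p) \vee Proven(y,q)) \to (\neg Prove(j,x,p) \vee (y\co q \to (Proven(y,q) \vee Prove(j,y,q))))$,
a purely present-tense implication with \emph{two} proof polynomials; discreteness enters only in the semantic argument for its validity, not in its syntax. That argument also runs in the opposite temporal direction from the one you sketch: one passes to the immediate \emph{successor} $m'$ of the falsifying moment $m$ along a suitable history, observes that $x$ has become proven at $m'$ (by undividedness and expansion of presented proofs), uses the $K$-antecedent to conclude $Proven(y,q)$ at $m'$, and then pulls $y$ back below $m'$ via no new proofs guaranteed --- which forces $y \in Act(m,h')$ and contradicts the failure of $Prove(j,y,q)$ at $m$. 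Your backward-looking version additionally relies on the claim that the immediate predecessor of a moment where $Proven(t,A)$ holds ``must already carry the corresponding act of $Prove$''; the constraints do not give this, since $t \in Act(m'',g)$ for some $g$ does not yield $t \in Act_{(m'',g,j)}$ for any agent $j$.

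Correspondingly, your refuting model is oriented the wrong way: you place the density \emph{below} a limit moment at which $t$ first becomes proven, whereas the paper's counterexample places it \emph{above} the falsifying moment. Concretely, the paper takes $Tree = \{a,-1\} \cup \{r \in \mathbb{R} \mid 0 \leq r < 1\}$ with two histories branching at $0$; along the real branch $x$ and $y$ are both in $Act(r,h_2)$ for every $r>0$ but $y \notin Act(0,h_2)$, so $y$ becomes proven at each $r>0$ with no first such moment, and no new proofs guaranteed is satisfied by the witnesses $r/2 \lhd r$. At $(0,h_2)$ the agent proves $p$ by $x$ without proving $q$ by $y$, while the $K$-antecedent holds globally because every moment has either $Act_m=\emptyset$ or $Act_m=\{x,y\}$. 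Your proposed limit-from-below construction faces a real obstruction you do not resolve: for $t$ to lie in $Act(m',h)$ at cofinally many $m' \lhd m_\infty$ without lying in any $Act_{m'}$, every such $m'$ must be a branching point (by the presenting-a-new-proof-makes-histories-divide constraint), which is a substantially more delicate structure than the paper's two-history model and is not needed. As it stands the proposal is a plausible research plan, not a proof: the separating formula, the validity argument over discrete time, and the refuting model all remain to be supplied.
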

\begin{proof}
We fix an arbitrary agent community $Ag$. We clearly have the
subset relation. As for the properness part, consider the
following formula:
\begin{align*}
A := K(\neg Proven(x,p) \vee Proven(y,q)) \to (&\neg Prove(j,x,p)
\vee \\
&\vee(y\co q \to (Proven(y,q) \vee Prove(j,y,q))),
\end{align*}
 with $x, y \in PVar$, $p \in Var$, and $j \in Ag$.
 We show that $A$ is not valid over the class of all unirelational jstit models (hence not valid
over the class of all jstit models either). Consider the following
unirelational model $\mathcal{M} = \langle Tree, \unlhd, Choice,
Act, R, \mathcal{E}, V\rangle$ for the community $Ag$:
\begin{itemize}
\item $Tree = \{ a, -1 \} \cup \{ r \in \mathbb{R} \mid 0 \leq r <
1 \}$;

\item $\unlhd = \{ (0, a), (-1,a), (a,a) \} \cup \{ (r, r') \mid
r, r' \in \mathbb{R} \cap Tree, r \leq r' \}$;

\item $R = \unlhd$;

\item $\mathcal{E}(m, t) = Form^{Ag}$, for all $m \in Tree$ and $t
\in Pol$.

\item $V(p) = V(q) = MH(\mathcal{M})$, $V(p') = \emptyset$ for all
$p' \in Var \setminus \{ p,q \}$.
\end{itemize}
It is straightforward to see that the above-defined components of
$\mathcal{M}$ satisfy all the constraints imposed on normal jstit
models except possibly those involving $Choice$ and
$Act$.\footnote{It is also easy to see that $\mathcal{M}$ is
$\mathcal{CS}$-normal for any possible constant specification
$\mathcal{CS}$ as defined in the next section. Therefore,
Proposition \ref{proposition} persists when one restricts
attention to a class of jstit models normal relative to a constant
specification.} Note that, among other things, we will have, under
the above settings, that:
\begin{equation}\label{E:prop1}
    \mathcal{M}, m \models x\co p \wedge y\co q
\end{equation}
for all $m \in Tree$. Before we go on and define the remaining
components, let us pause a bit and reflect on the structure of
histories in the model $\mathcal{M}$ that is being defined. We
only have two histories in it, one is $h_1 = \{ -1, 0, a \}$ and
the other is $h_2 = \{ -1 \} \cup \{ r \in \mathbb{R} \mid 0 \leq
r < 1 \}$. So we define:
\begin{align*}
    Choice^m_i = \left\{%
\begin{array}{ll}
    H_m, & \hbox{if $i \neq j$ or $m \neq 0$;} \\
    \{\{h_1\},\{h_2\}\}, & \hbox{if $i = j$ and $m = 0$.} \\
\end{array}%
\right.
\end{align*}
\begin{align*}
    Act(m,h) = \left\{%
\begin{array}{ll}
    \{ x,y \}, & \hbox{if $m \in \mathbb{R}$ and $m > 0$;} \\
    \{x \}, & \hbox{if $m = 0$ and $h = h_2$;} \\
    \emptyset, & \hbox{otherwise.} \\
\end{array}%
\right.
\end{align*}
Again, most of the constraints on jstit models are now easily seen
to be satisfied. The no new proofs guaranteed constraint is
perhaps less straightforward, so we consider it in some detail. We
have, on the one hand, $Act_m = \emptyset$, whenever $m \in \{ -1,
0, a \}$ so neither of these moments can falsify the constraint.
The only remaining option is that $m \in \{ r \in \mathbb{R} \mid
0 < r < 1 \}$, say $m = r$. But then the only history passing
through $r$ is $h_2$ and we have, on the other hand, $\frac{r}{2}
\in Tree$, $\frac{r}{2} < r$, and $Act(\frac{r}{2}, h_2) =
Act(r,h_2) = Act_r$ so that the no new proofs guaranteed
constraint is again verified.

Now, consider $0 \in Tree$. The set of its epistemic alternatives
is $Tree \setminus \{ -1 \}$. We have all of the following:
$Choice^0_j(h_2) = \{h_2\}$, $x \in Act(0,h_2)$,  $H_0 = \{ h_1,
h_2\}$, $x \notin Act(0,h_1)$, and $y \notin Act(0,h_2)$. In
virtue of all this and by \eqref{E:prop1}, we obtain that:
\begin{equation}\label{E:prop2}
\mathcal{M}, 0, h_2 \models  Prove(j,x,p) \wedge y\co q \wedge
\neg Prove(j,y,q) \wedge \neg Proven(y,q).
\end{equation}
On the other hand, if $m \in Tree$ and $h \in H_m$, then either $m
\in \{ r \in \mathbb{R} \mid 0 < r < 1 \}$ or $m$ is outside of
this set. In the former case we have $H_m = \{h_2 \}$, whence
$Act_m = \{x,y\}$ so that, by \eqref{E:prop1}, we get:
$$
\mathcal{M}, m,h \models  Proven(y,q),
$$
for all $h \in H_m$. In the latter case we have $Act_m =
\emptyset$, which means that we must also have:
$$
\mathcal{M}, m,h \models  \neg Proven(x,p),
$$
for all $h \in H_m$. So, in any case, the formula $\neg
Proven(x,p) \vee Proven(y,q)$ gets to be satisfied throughout all
of the moment-history pairs in $\mathcal{M}$, which further means
that:
$$
\mathcal{M}, 0, h_2 \models  K(\neg Prove(j,x,p) \vee
Prove(j,y,q))
$$
is satisfied. Adding the latter with \eqref{E:prop2}, we see that
$(0,h_2)$ falsifies $A$ in $\mathcal{M}$.

On the other hand, $A$ is valid in the class of jstit models based
on discrete time (hence also over unirelational jstit models based
on discrete time). In order to show this, we will assume its
invalidity and obtain a contradiction. Indeed, let

\noindent$\mathcal{M} = \langle Tree, \unlhd, Choice, Act, R,R_e,
\mathcal{E}, V\rangle$ be a jstit model based on discrete time
such that $\mathcal{M}, m, h \not\models A$.  Then we will have
both
\begin{equation}\label{E:discr1}
\mathcal{M}, m, h \models  K(\neg Proven(x,p) \vee Proven(y,q)),
\end{equation}
and
\begin{equation}\label{E:discr2}
\mathcal{M}, m, h \models Prove(j,x,p) \wedge y\co q \wedge \neg
Prove(j,y,q) \wedge \neg Proven(y,q).
\end{equation}
Note that the failure of $Proven(y,q)$ combined with satisfaction
of $y\co q$ shows that we cannot have $y \in Act_m$. On the other
hand, the failure of $Prove(j,y,q)$ at $(m,h)$ leaves us with the
following options:
$$
(\mathcal{M}, m, h \not\models y\co q) \vee y \in Act_m \vee
(\exists g \in Choice^m_j(h))(y \notin Act(m,g)).
$$
Thus we know that for some $h' \in Choice^m_j(h)$ we will have $y
\notin Act(m,h')$. Also, note that for any such $h'$ we will have
$Choice^m_j(h') = Choice^m_j(h)$. Adding this up with the
satisfaction of $Prove(j,x,p)$ at $(m,h)$, we get that one can
choose an $h' \in Choice^m_j(h)$ in such a way that the following
holds:
\begin{equation}\label{E:discr2-2}
y \notin Act(m,h') \& (\mathcal{M}, m, h' \models Prove(j,x,p)
\wedge y\co q).
\end{equation}
By Lemma \ref{determinate} and  \eqref{E:discr1}, we also know
that for such $h'$ we will have:
\begin{equation}\label{E:discr1-1}
\mathcal{M}, m, h' \models  K(\neg Proven(x,p) \vee Proven(y,q)),
\end{equation}
Next, we observe that since $(m,h')$ satisfies $Prove(j,x,p)$, we
know that $x \in Act(m,h')$ and also that there is a $g \in H_m$
such that $x \notin Act(m,g)$. This shows that we must have $h'
\neq g$, and, by Lemma \ref{technical}.2, this means $m$ must have
$\lhd$-successors along $h'$. Since $\mathcal{M}$ is based on
discrete time, consider embedding $f$ of $h'$ into an initial
segment of $\omega$. Suppose that $f(m) = n$. We have established
that $m$ is not the $\unlhd$-last moment along $h'$, so there must
be an $m' \in h'$ such that $f(m') = n + 1$. By the embedding
properties of $f$, this means that $m \lhd m'$ and for no $m'' \in
Tree$ it is true that $m \lhd m'' \lhd m'$. By the future always
matters constraint, we know that $R(m,m')$, therefore, by
\eqref{E:discr1-1} we must have:
\begin{equation}\label{E:discr5}
\mathcal{M}, m', h' \models \neg Proven(x,p) \vee Proven(y,q).
\end{equation}
On the other hand, let $g \in H_{m'}$ be arbitrary. Then, by Lemma
\ref{technical}.3, $g \in H_m$, and, moreover, $g \approx_m h'$.
Therefore, by the presenting a new proof makes histories divide
constraint, we must have $Act(m, g) = Act(m,h')$. By
\eqref{E:discr2-2} we know that $x \in Act(m,h')$, which means
that also  $x \in Act(m,g)$. Since $g \in H_{m'}$ was chosen
arbitrarily, the latter means that $x \in \bigcap_{g \in
H_{m'}}(Act(m,g))$, and, by the expansion of presented proofs
constraint, $x \in Act_{m'}$. Further, it follows from
\eqref{E:discr2-2} that:
\begin{equation}\label{E:discr6}
\mathcal{M}, m, h' \models x\co p.
\end{equation}
Given that $R \subseteq R_e$, we must have $R_e(m,m')$, whence by
the monotonicity of evidence and the pre-order properties of $R_e$
we further obtain that:
\begin{equation}\label{E:discr7}
\mathcal{M}, m', h' \models x\co p.
\end{equation}
Since we know that $x \in Act_{m'}$, \eqref{E:discr7} immediately
leads to:
\begin{equation}\label{E:discr9}
\mathcal{M}, m', h' \models Proven(x,p).
\end{equation}
Whence, in view of \eqref{E:discr5}, it follows that
\begin{equation}\label{E:discr10}
\mathcal{M}, m', h' \models Proven(y,q).
\end{equation}
The latter means that $y \in Act_{m'}$, and by the no new proofs
guaranteed constraint, it follows that for some $g \in H_{m'}$ and
some $m'' \in g$ such that $m'' \lhd m'$, we must have $y \in
Act(m'',g)$. Now, if $m'' \lhd m'$, then $m'' \unlhd m$, since
$m'$ was chosen as the immediate $\lhd$-successor of $m$ along
$h'$. The latter means, by the expansion of presented proofs, that
$y \in Act(m,g)$. Since, as we have shown above, $g \approx_m h'$,
this means, by the presenting a new proof makes histories divide
constraint, that $Act(m, g) = Act(m,h')$ and, further, that $y \in
Act(m,h')$. The latter is in obvious contradiction with
\eqref{E:discr2-2}.

The obtained contradiction shows that $A$ is valid over the class
of jstit models based on discrete time, so that it must also be
valid over its unirelational subclass.
\end{proof}

\section{Axiomatic system and soundness}\label{axioms}

Throughout this section, and the next one, we are going to let
$Ag$ serve as a constant denoting arbitrary but fixed agent
community. We consider the Hilbert-style axiomatic system $\Pi$
with the following set of axiomatic schemes:
\begin{align}
&\textup{A full set of axioms for classical propositional
logic}\label{A0}\tag{\text{A0}}\\
&\textup{$S5$ axioms for $\Box$ and $[j]$ for every $j \in
Ag$}\label{A1}\tag{\text{A1}}\\
&\Box A \to [j]A \textup{ for every }j \in Ag\label{A2}\tag{\text{A2}}\\
&(\Diamond[j_1]A_1 \wedge\ldots \wedge \Diamond[j_n]A_n) \to
\Diamond([j_1]A_1 \wedge\ldots \wedge[j_n]A_n)\label{A3}\tag{\text{A3}}\\
&(s\co(A \to B) \to (t\co A \to (s\times t)\co
B)\label{A4}\tag{\text{A4}}\\
&t\co A \to (!t\co(t\co A) \wedge KA)\label{A5}\tag{\text{A5}}\\
&(s\co A \vee t\co A) \to (s+t)\co A\label{A6}\tag{\text{A6}}\\
&\textup{$S4$ axioms for $K$}\label{A7}\tag{\text{A7}}\\
&KA \to \Box K\Box A\label{A8}\tag{\text{A8}}\\
&Prove(j, t, A) \to (\neg Proven(t, A) \wedge [j]Prove(j, t, A) \wedge \neg\Box Prove(j,t,A)\wedge t\co A)\label{B9}\tag{\text{B9}}\\
&(Prove(j, t, A) \wedge t\co B) \to Prove(j, t, B)\label{B10}\tag{\text{B10}}\\
&Proven(t,A) \to (KProven(t,A) \wedge t\co A)\label{B11}\tag{\text{B11}}\\
&(Proven(t, A) \wedge t\co B) \to Proven(t, B)\label{B12}\tag{\text{B12}}\\
&\neg Prove(j,t,A) \to \langle j \rangle (\bigwedge_{i \in Ag}\neg
Prove(i,t,A))\label{B13}\tag{\text{B13}}
\end{align}

The assumption is that in \eqref{A3} $j_1,\ldots, j_n$ are
pairwise different.

To this set of axiom schemes we add the following rules of
inference:
\begin{align}
&\textup{From }A, A \to B \textup{ infer } B;\label{R1}\tag{\text{R1}}\\
&\textup{From }A\textup{ infer }KA;\label{R2}\tag{\text{R2}}\\
&\textup{From }KA \to (\neg Proven(t_1, B_1) \vee\ldots \vee\neg
Proven(t_n, B_n))\notag\\
&\textup{      infer } KA \to (\bigwedge_{j \in Ag}\neg
Prove(j,t_1, B_1) \vee\ldots \vee \bigwedge_{j \in Ag}\neg
Prove(j,t_n, B_n)).\label{S4}\tag{\text{S4}}
\end{align}
The different notation styles present in the above sets of axioms
and inference rules are meant to underscore that the axioms
\eqref{A0}--\eqref{A8} and rules \eqref{R1}, \eqref{R2} are shared
by $\Pi$ with other axiomatizations for logics combining
justification and stit modalities, including the axiomatization of
the implicit jstit logic given in \cite{OLimpl} and the
axiomatization of JA-STIT given in \cite{OLexpl-gen}.

A standard way to obtain extensions of $\Pi$ is by adding to it
\emph{constant specifications}, which basically ensure that one
has enough pre-assigned proofs for the axioms of this system. More
precisely, a constant specification is a set $\mathcal{CS}$ such
that:
\begin{itemize}
\item $\mathcal{CS} \subseteq \{ c_n\co\ldots c_1\co A\mid
c_1,\ldots,c_n \in PConst, A\textup{ an instance of
}\eqref{A0}-\eqref{A8}, \eqref{B9}-\eqref{B13} \}$;

\item Whenever $c_{n+1}\co c_n\co\ldots c_1\co A \in
\mathcal{CS}$, then also $c_n\co\ldots c_1\co A \in \mathcal{CS}$.
\end{itemize}
A given constant specification can be added to $\Pi$ by appending
the following inference rule \eqref{RCS} to its set of rules:
\begin{align}
\textup{If }c_n\co\ldots c_1\co A \in \mathcal{CS},\textup{ infer
} c_n\co\ldots c_1\co A.\label{RCS}\tag{\text{$R_{\mathcal{CS}}$}}
\end{align}
The resulting axiomatic system is then called $\Pi(\mathcal{CS})$.
Note that $\emptyset$ is clearly one example of constant
specification and that we have $\Pi(\emptyset) = \Pi$. Whenever
$\mathcal{CS} \neq \emptyset$, the system $\Pi(\mathcal{CS})$ ends
up proving some formulas which are not valid over the class of
jstit models. Nevertheless, restriction on jstit models which
comes with a commitment to a given $\mathcal{CS}$ is relatively
straightforward to describe. We say that a jstit model
$\mathcal{M}$ is $\mathcal{CS}$\emph{-normal} iff it is true that:
\begin{align*}
(\forall c \in PConst)(\forall m \in Tree)(\{ A \mid c\co A\in
\mathcal{CS} \} \subseteq \mathcal{E}(m,c)),
\end{align*}
where $\mathcal{E}$ is the $\mathcal{M}$'s admissible evidence
function. Again, it is easy to see that the class of
$\emptyset$-normal jstit models is just the whole class of jstit
models so that the representation $\Pi(\emptyset) = \Pi$ does not
place any additional restrictions on the class of intended models
of $\Pi$.

For a given constant specification $\mathcal{CS}$, we define that
a \emph{proof} in $\Pi(\mathcal{CS})$ as a finite sequence of
formulas such that every formula in it is either an instance of
one of the schemes \eqref{A0}--\eqref{A8}, \eqref{B9}--\eqref{B12}
or is obtained from earlier elements of the sequence by one of the
inference rules \eqref{R1},\eqref{R2}, \eqref{RCS}, \eqref{S4}. A
proof is a proof of its last formula. If an $A \in Form^{Ag}$ is
provable in $\Pi(\mathcal{CS})$, we will write
$\vdash_{\mathcal{CS}} A$. We say that $\Gamma \subseteq
Form^{Ag}$ is $\mathcal{CS}$-\emph{inconsistent} iff for some
$A_1,\ldots,A_n \in \Gamma$ we have $\vdash_{\mathcal{CS}} (A_1
\wedge\ldots \wedge A_n) \to \bot$, and we say that $\Gamma$ is
$\mathcal{CS}$-consistent iff it is not
$\mathcal{CS}$-inconsistent. $\Gamma$ is
$\mathcal{CS}$-\emph{maxiconsistent} iff it is
$\mathcal{CS}$-consistent and no $\mathcal{CS}$-consistent subset
of $Form^{Ag}$ properly extends $\Gamma$.

We observe that this definition allows for the standard operations
with consistent and maxiconsistent sets. Namely, every
$\mathcal{CS}$-consistent set $\Gamma$ can be extended to a
$\mathcal{CS}$-maxiconsistent set $\Delta \supseteq \Gamma$, and
$\mathcal{CS}$-maxiconsistent sets are regular relative to the
propositional connectives in that for every
$\mathcal{CS}$-maxiconsistent set $\Gamma$ and all $A,B \in
Form^{Ag}$ all of the following holds:
\begin{itemize}
\item Exactly one element of $\{A, \neg A \}$ is in $\Gamma$.

\item $A \vee B \in \Gamma$ iff $(A \in \Gamma$ or $B \in
\Gamma)$.

\item If $A, (A \to B) \in \Gamma$, then $B \in \Gamma$.

\item $A \wedge B \in \Gamma$ iff $(A \in \Gamma$ and $B \in
\Gamma)$.
\end{itemize}

Our goal is now to obtain, for any given constant specification
$\mathcal{CS}$, a strong completeness theorem for
$\Pi(\mathcal{CS})$, and we start by establishing some soundness
claims:

\begin{theorem}\label{soundness}
Let $\mathcal{CS}$ be an arbitrary constant specification and let
$A \in Form^{Ag}$ be such that $\vdash_{\mathcal{CS}} A$. Then $A$
is valid over the class of $\mathcal{CS}$-normal jstit models.
\end{theorem}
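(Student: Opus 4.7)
The plan is a routine induction on the length of a proof in $\Pi(\mathcal{CS})$ in an arbitrary $\mathcal{CS}$-normal jstit model $\mathcal{M}$, evaluated at an arbitrary $(m,h) \in MH(\mathcal{M})$. Since the rules \eqref{R1}, \eqref{R2}, \eqref{RCS} preserve validity by routine reasoning (\eqref{R2} because the satisfaction of $KA$ depends only on $R$-reachable moments, which is vacuous when $A$ is valid everywhere; \eqref{RCS} directly by the definition of $\mathcal{CS}$-normality together with the evidence closure clauses used to propagate proofs through the prefixes $c_n{:}\dots{:}c_1$), the real content lies in checking the axioms and the nonstandard rule \eqref{S4}.

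For the axioms, \eqref{A0}--\eqref{A2} and \eqref{A7}--\eqref{A8} follow by standard Kripke-style arguments from the partition properties of $Choice^m_j$, the pre-order properties of $R$, and the inclusion $\unlhd \subseteq R$ (the latter being crucial for \eqref{A8}). Axiom \eqref{A3} is a direct application of the independence of agents constraint. Axioms \eqref{A4}--\eqref{A6} follow from the three evidence closure properties combined with the pre-order properties of $R_e$ and monotonicity of evidence; the second conjunct of \eqref{A5} in particular uses the global ``$\models A$ on all $R_e$-reachable $(m',h')$'' part of the $t{:}A$ clause together with $R \subseteq R_e$. The proving-modality axioms \eqref{B9}--\eqref{B12} unfold directly from the satisfaction clauses for $Prove$ and $Proven$, using moment-determinacy of $t{:}A$, Lemma \ref{technical}.2 for the $\neg\Box Prove(j,t,A)$ conjunct in \eqref{B9} (which demands a history on which $t$ is not yet presented), and ``presented proofs are epistemically transparent'' for the $KProven(t,A)$ conjunct in \eqref{B11}. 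For \eqref{B13}, I split on why $\neg Prove(j,t,A)$ holds at $(m,h)$: if $t \in Act_m$ or $t{:}A$ fails, then moment-determinacy kills $Prove(i,t,A)$ for every $i$ along every $h' \in Choice^m_j(h)$; otherwise some $h' \in Choice^m_j(h)$ witnesses $t \notin Act(m,h')$, and this single $h'$ defeats $Prove(i,t,A)$ for every agent $i$.

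The main obstacle is rule \eqref{S4}. Assume the premise is valid and, for contradiction, take $(m,h)$ satisfying $KA$ and, for every index $i \le n$, some agent $j_i$ with $Prove(j_i,t_i,B_i)$ at $(m,h)$. The plan is to manufacture a single moment $m' \rhd m$ at which all $Proven(t_i,B_i)$ hold and at which $KA$ still holds, thereby contradicting the premise. Using independence of agents, I pick any $h^* \in \bigcap_{j \in Ag} Choice^m_j(h)$; then $t_i \in Act(m,h^*)$ for every $i$, while $t_i \notin Act_m$ supplies, via Lemma \ref{technical}.2, some $m' \rhd m$ with $h^* \in H_{m'}$. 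Every $g \in H_{m'}$ satisfies $g \approx_m h^*$, so ``presenting a new proof makes histories divide'' gives $Act(m,g)=Act(m,h^*) \ni t_i$, and then expansion of presented proofs promotes this to $t_i \in Act(m',g)$; taking the intersection over $g \in H_{m'}$ yields $t_i \in Act_{m'}$. Combined with preservation of $t_i{:}B_i$ from $m$ to $m'$ (via $\unlhd \subseteq R \subseteq R_e$, monotonicity of evidence, and transitivity of $R_e$), this gives $Proven(t_i,B_i)$ at $m'$ for every $i$. Finally, $KA$ at $(m,h)$ together with $R(m,m')$ and the S4 properties of $K$ from \eqref{A7} transfers to $KA$ at $(m',h^*)$, so the premise forces some $\neg Proven(t_i,B_i)$ to hold there, contradicting what we just derived.
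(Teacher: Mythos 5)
Your proposal is correct and follows essentially the same route as the paper's proof: direct verification of the axioms from the semantic constraints, and, for \eqref{S4}, the same contradiction argument that pushes the witnesses $t_1,\ldots,t_n$ forward to a strictly later moment $m'$ on the given history (obtained from Lemma \ref{technical}.2), where undividedness at $m$ together with the ``presenting a new proof makes histories divide'' and ``expansion of presented proofs'' constraints forces $t_i \in Act_{m'}$ and hence $Proven(t_i,B_i)$, while $KA$ persists by transitivity of $R$. Two cosmetic remarks: the appeal to independence of agents is unnecessary, since $h$ itself already lies in every $Choice^m_{j_i}(h)$; and the soundness of \eqref{RCS} rests on the downward-closure clause in the definition of constant specifications together with $\mathcal{CS}$-normality (an induction on the length of the constant prefix), not on the evidence closure properties of $\mathcal{E}$.
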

\begin{proof}
Given the above notion of proof, it is sufficient to show that
every instance of \eqref{A0}--\eqref{A8}, \eqref{B9}--\eqref{B12}
is valid over the class of $\mathcal{CS}$-normal jstit models and
that every application of rules \eqref{R1}, \eqref{R2},
\eqref{RCS}, and \eqref{S4} to formulas which are valid over the
class of $\mathcal{CS}$-normal jstit models yields a formula which
is valid over the class of $\mathcal{CS}$-normal jstit models.

First, note that if $\mathcal{M} = \langle Tree, \unlhd, Choice,
Act, R, R_e, \mathcal{E}, V\rangle$ is a normal jstit model, then
$\langle Tree, \unlhd, Choice, V\rangle$ is a model of stit logic.
Therefore, axioms \eqref{A0}--\eqref{A3}, which were copy-pasted
from the standard axiomatization of \emph{dstit}
logic\footnote{See, e.g. \cite[Ch. 17]{belnap2001facing}, although
$\Pi$ uses a simpler format closer to that given in \cite[Section
2.3]{balbiani}.} must be valid. Second, note that if $\mathcal{M}
= \langle Tree, \unlhd, Choice, Act, R, R_e, \mathcal{E},
V\rangle$ is a normal jstit model, then $\mathcal{M} = \langle
Tree, R, R_e, \mathcal{E}, V\rangle$ is what is called in
\cite[Section 6]{ArtemovN05} a justification model\footnote{The
format for the variable assignment $V$ is slightly different, but
this is of no consequence for the present setting.}. This means
that also all of the \eqref{A4}--\eqref{A7} must be valid, given
that all of them were borrowed from the standard axiomatization of
justification logic. The validity of other axioms will be
motivated below in some detail. In what follows, $\mathcal{M} =
\langle Tree, \unlhd, Choice, Act, R, R_e,\mathcal{E}, V\rangle$
will always stand for an arbitrary $\mathcal{CS}$-normal jstit
model, and $(m,h)$ for an arbitrary element of $MH(\mathcal{M})$.

As for \eqref{A8}, assume for \emph{reductio} that $\mathcal{M},
m,h \models KA \wedge \Diamond K \Diamond\neg A$. Then (using
Lemma \ref{determinate} to omit the histories) $\mathcal{M}, m
\models KA$ and also $\mathcal{M}, m \models K \Diamond\neg A$. By
reflexivity of $R$, it follows that $\Diamond\neg A$ will be
satisfied at $m$ in $\mathcal{M}$. The latter means that, for some
$h' \in H_m$, $A$ must fail at $(m,h')$ and therefore, again by
reflexivity of $R$, $KA$ must fail at $m$ in $\mathcal{M}$, a
contradiction.

Consider \eqref{B9} and assume that $\mathcal{M}, m,h \models
Prove(j,t,A)$. Then $t \notin Act_m$, which immediately implies
that:
\begin{equation}\label{E:cs1}
\mathcal{M}, m,h \not\models Proven(t,A).
\end{equation}
Next, we must have, just by the satisfaction of $Prove(j,t,A)$,
that:
\begin{equation}\label{E:cs2}
\mathcal{M}, m \models t\co A.
\end{equation}
Further, note that for every $g \in Choice^m_j(h)$ we will have
$Choice^m_j(g) = Choice^m_j(h)$, so that for every such $g$ we
will have $t \in Act_{(m,g,j)}$. Adding this up with \eqref{E:cs2}
and the fact that $t \notin Act_m$, we get that $Prove(j,t,A)$ is
satisfied at $(m,g)$ for every $g \in Choice^m_j(h)$, or, in other
words, that we have:
\begin{equation}\label{E:cs3}
\mathcal{M}, m,h \models [j]Prove(j,t,A).
\end{equation}
Finally, given $t \notin Act_m$, consider $h' \in H_m$ such that
$t \notin Act(m,h')$. Given that $h' \in Choice^m_j(h')$, we know
that $t \notin Act_{(m,h',j)}$, which means that $Prove(j,t,A)$
fails at $(m,h')$ so that,in view of $h' \in H_m$, we get:
\begin{equation}\label{E:cs4}
\mathcal{M}, m,h \models \neg\Box Prove(j,t,A).
\end{equation}
Summing up \eqref{E:cs1}--\eqref{E:cs4}, we see that \eqref{B9} is
satisfied at $(m,h)$.

As for \eqref{B10}, assume that $Prove(j,t,A) \wedge t\co B$ is
satisfied at $(m,h)$. By the satisfaction of the first conjunct we
get that  $t \in Act_{(m,h,j)} \& t \notin Act_m$, which, together
with the satisfaction of $t\co B$, yields that $\mathcal{M}, m,h
\models Prove(j,t,B)$.

Next we consider \eqref{B11}. Assuming that $\mathcal{M}, m,h
\models Proven(t,A)$, we immediately get that:
\begin{equation}\label{E:cs5}
t \in Act_m
\end{equation}
and that:
\begin{equation}\label{E:cs6}
\mathcal{M}, m\models t\co A.
\end{equation}
Assume that $m' \in Tree$ is such that $R(m,m')$. Then we must
have:
\begin{equation}\label{E:cs7}
\mathcal{M}, m'\models t\co A
\end{equation}
by \eqref{E:cs5} and $R \subseteq R_e$, and we must also have:
\begin{equation}\label{E:cs8}
t \in Act_{m'}
\end{equation}
by the epistemic transparency of presented proofs. Thus we will
have $M, m' \models Proven(t,A)$ for an arbitrary $R$-successor
$m'$ of $m$, which means, by definition, that we will have:
\begin{equation}\label{E:cs7}
\mathcal{M}, m\models KProven(t,A).
\end{equation}
Taken together, \eqref{E:cs6} and \eqref{E:cs7} show that
\eqref{B11} is satisfied at $(m,h)$.

As for \eqref{B12}, assume that $Proven(t,A) \wedge t\co B$ is
satisfied at $m$. By the satisfaction of the first conjunct we get
that  $t \in Act_m$, which together with the satisfaction of $t\co
B$ yields that $\mathcal{M}, m,h \models Proven(t,B)$.

The last axiomatic scheme is \eqref{B13}. Assume that
$Prove(j,t,A)$ fails at $(m,h)$. This can happen for different
reasons, therefore, we have to distinguish between three cases:

\emph{Case 1}. $\mathcal{M}, m,h \not\models t\co A$. Then, by the
validity of \eqref{B9} we must have

\noindent$\mathcal{M}, m,h \models \bigwedge_{i \in Ag}\neg
Prove(i,t,A)$. Given that $h \in Choice^m_j(h)$, we further obtain
that $\mathcal{M}, m,h \models \langle j\rangle\bigwedge_{i \in
Ag}\neg Prove(i,t,A)$, and the axiom is satisfied.

\emph{Case 2}. $\mathcal{M}, m,h \models t\co A$ and $t \in
Act_m$. Then we must have $\mathcal{M}, m,h \models Proven(t,A)$
and, again by \eqref{B9}, we must have $\mathcal{M}, m,h \models
\bigwedge_{i \in Ag}\neg Prove(i,t,A)$. Reasoning as in Case 1, we
again see that the axiom is satisfied.

\emph{Case 3}. $\mathcal{M}, m,h \models t\co A$ and $t \notin
Act_m$. But then, given the failure of $Prove(j,t,A)$ at $(m,h)$,
there must be some $h' \in Choice^m_j(h)$ such that $t \notin
Act(m,h')$. Notice that we will have then $h' \in Choice^m_i(h')$
for every $i \in Ag$. Therefore, for every $i \in Ag$ we will have
$t \notin Act_{(m,h',i)}$, whence it follows that $\mathcal{M},
m,h' \models \bigwedge_{i \in Ag}\neg Prove(i,t,A)$. Since $h' \in
Choice^m_j(h)$, this further means that $\mathcal{M}, m,h \models
\langle j\rangle\bigwedge_{i \in Ag}\neg Prove(i,t,A)$, and the
axiom is satisfied.

 Taking up the rules of inference, we immediately see that
 \eqref{R1} and \eqref{R2} can only return
 $\mathcal{CS}$-validities when given another
 $\mathcal{CS}$-validities as premises. As for \eqref{RCS}, assume
 that $B = c_{n+1}\co c_n\co\ldots c_1\co A \in \mathcal{CS}$. We
 argue by induction on $n \geq 0$.

 \emph{Basis}. If $n = 0$ then $B = c_1\co A \in \mathcal{CS}$.
 Since $\mathcal{M}$ is $\mathcal{CS}$-normal, this means that $A
 \in \mathcal{E}(m,c_1)$. Also $A$ must be an instance of one of
 the above axiomatic schemes which were all shown to be
 $\mathcal{CS}$-validities above, which means that $A$ must hold
 at every moment-history pair in $\mathcal{M}$, including the
 pairs where the moment is some $R_e$-successor of $m$. Therefore,
 we must have $\mathcal{M}, m,h \models c_1\co A = B$.

 \emph{Induction step}. $n = k + 1$. Then $B = c_{k+2}\co c_{k+1}\co\ldots c_1\co A \in \mathcal{CS}$.
 By definition of constant specifications, we will also have then
  $c_{k+1}\co\ldots c_1\co A \in \mathcal{CS}$. By induction
  hypothesis, we know that  $c_{k+1}\co\ldots c_1\co A$ is a
  $\mathcal{CS}$-validity, hence must hold in every moment-history
  pair of $\mathcal{M}$, including those
 pairs where the moment is some $R_e$-successor of $m$. By
 $\mathcal{CS}$-normality of $\mathcal{M}$ we also know that  $c_{k+1}\co\ldots c_1\co A \in
 \mathcal{E}(m,c_{k+2})$, which shows that $\mathcal{M}, m,h \models
 B$.

The hardest part is to show the soundness of the rule \eqref{S4}.
Assume that

\noindent$KA \to (\neg Proven(t_1, B_1) \vee\ldots \vee\neg
Proven(t_n, B_n))$ is valid over $\mathcal{CS}$-normal jstit
models, and assume also that for some $\mathcal{CS}$-normal jstit
model $\mathcal{M} = \langle Tree, \unlhd, Choice, Act, R, R_e,
\mathcal{E}, V\rangle$ and for some $(m,h)\in MH(\mathcal{M})$ we
have:
\begin{equation}\label{E:e1}
\mathcal{M}, m, h \models KA \wedge (\bigvee_{j \in Ag}
Prove(j,t_1, B_1) \wedge\ldots \wedge \bigvee_{j \in Ag}
Prove(j,t_n, B_n)).
\end{equation}
Then we can choose $j_1,\ldots,j_n$ in such a way that we have:
\begin{equation}\label{E:e2}
\mathcal{M}, m, h \models KA \wedge (Prove(j_1,t_1, B_1)
\wedge\ldots \wedge Prove(j_n,t_n, B_n)).
\end{equation}
By the definition of satisfaction relation, we obtain that:
\begin{equation}\label{E:e3}
\mathcal{M}, m, h \models t_1\co B_1 \wedge\ldots \wedge t_n\co
B_n.
\end{equation}
The latter basically means two things:
\begin{equation}\label{E:e4}
 B_1 \in \mathcal{E}(m,t_1),\ldots, B_n \in \mathcal{E}(m,t_n).
\end{equation}
and
\begin{equation}\label{E:e5}
 (\forall m_0 \in Tree)(\forall h_0 \in H_{m_0})(R_e(m,m_0) \Rightarrow \mathcal{M},m_0,h_0 \models B_1\wedge\ldots\wedge B_n).
\end{equation}
On the other hand, we obtain from \eqref{E:e2}, also by the
definition of satisfaction relation and $h \in Choice^m_j(h)$,
that:
\begin{equation}\label{E:e5-1}
t_1,\ldots, t_n \in Act(m,h).
\end{equation}
We also know that we can choose a $g \in H_m$ such that $t_1
\notin Act(m,g)$. This means that $h \neq g$. By Lemma
\ref{technical}.2, it follows that we can choose an $m' \in h$
such that $m' \rhd m$. So we choose such an $m'$. By Lemma
\ref{technical}.3 $H_{m'} \subseteq H_m$, and, moreover, every
history in $H_{m'}$ is undivided from $h$ at $m$. By the
presenting a new proof makes histories divide constraint, this
means that:
\begin{equation}\label{E:e6}
(\forall g \in H_{m'})(Act(m,g) = Act(m,h)).
\end{equation}
By \eqref{E:e5-1} and \eqref{E:e6}, this means that:
\begin{equation}\label{E:e7}
t_1,\ldots, t_n \in \bigcap_{g \in H_{m'}}Act(m,g).
\end{equation}
Note that it follows from $m' \rhd m$ and the expansion of
presented proofs constraint that $\bigcap_{g \in H_{m'}}Act(m,g)
\subseteq Act_{m'}$, so that we must have, by \eqref{E:e7}, that:
\begin{equation}\label{E:e8}
t_1,\ldots, t_n \in Act_{m'}.
\end{equation}
Next, it follows from \eqref{E:e4} by the monotonicity of evidence
that:
\begin{equation}\label{E:e9}
 B_1 \in \mathcal{E}(m',t_1),\ldots, B_n \in \mathcal{E}(m',t_n),
\end{equation}
and it follows from $m' \rhd m$ by the future always matters
constraint and the inclusion $R \subseteq R_e$ that $R_e(m,m')$.
From the latter fact we get, by \eqref{E:e5} and transitivity of
$R_e$ that:
\begin{equation}\label{E:e10}
 (\forall m_0 \in Tree)(\forall h_0 \in H_{m_0})(R_e(m',m_0) \Rightarrow \mathcal{M},m_0,h_0 \models B_1\wedge\ldots\wedge B_n).
\end{equation}
In their turn, \eqref{E:e9} and \eqref{E:e10} yield that:
\begin{equation}\label{E:e3}
\mathcal{M}, m' \models t_1\co B_1 \wedge\ldots \wedge t_n\co B_n,
\end{equation}
by the definition of satisfaction relation. Adding this up with
\eqref{E:e8} we get that:
\begin{equation}\label{E:e11}
\mathcal{M}, m', h \models Proven(t_1, B_1) \wedge\ldots \wedge
Prove(t_n, B_n).
\end{equation}
Finally, by $m' \rhd m$ and the future always matters constraint,
we get that $R(m,m')$, whence, by transitivity of $R$ and
\eqref{E:e2}, we obtain that:
\begin{equation}\label{E:e12}
\mathcal{M}, m', h \models KA.
\end{equation}
Taken together, \eqref{E:e11} and \eqref{E:e12} contradict the
assumed validity of

\noindent$KA \to (\neg Proven(t_1, B_1) \vee\ldots \vee\neg
Proven(t_n, B_n))$, which shows that \eqref{E:e1} cannot be true
for any moment-history pair in any $\mathcal{CS}$-normal jstit
model.
\end{proof}

Before treating completeness, we make some elementary observations
about provability in the systems of the form $\Pi(\mathcal{CS})$.
We first state some theorems and derivable rules of $\Pi$.
\begin{lemma}\label{p-theorems}
Let $A \in Form^{Ag}$, $j, i_1,\ldots, i_n \in Ag$ and $t \in
Pol$. Then all of the following theorems and derived rules are
provable in $\Pi$:
\begin{align}
&KA \to \Box A\label{T0}\tag{\text{T0}}\\
&\textup{From }A\textup{ infer }\Box A\label{R'1}\tag{\text{R'1}}\\
&\textup{From }A\textup{ infer }[j]A\label{R'2}\tag{\text{R'2}}\\
&t\co A \to Kt\co A\label{T1}\tag{\text{T1}}\\
&t\co A \to \Box t\co A\label{T2}\tag{\text{T2}}\\
&KA \to \Box KA\label{T3}\tag{\text{T3}}\\
&Proven(t,A) \to \Box Proven(t,A)\label{T4}\tag{\text{T4}}\\
&\neg\Box(Prove(i_1,t,A)\vee\ldots\vee
Prove(i_n,t,A))\label{T5}\tag{\text{T5}}
\end{align}
\end{lemma}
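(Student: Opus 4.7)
The plan is to derive everything from \eqref{T0}, which itself falls out of \eqref{A8} combined with the $S5/S4$ machinery. Most of the remaining items then reduce to finding a $K$-analogue of the required implication and composing with \eqref{T0}; only \eqref{T5} calls for genuine modal manipulation.

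First I would prove \eqref{T0}: \eqref{A8} gives $KA \to \Box K\Box A$, and the $T$-axiom for $K$ from \eqref{A7} gives $K\Box A \to \Box A$, so $\Box$-necessitation (part of the $S5$ package in \eqref{A1}) together with the $K$-axiom for $\Box$ yields $\Box K\Box A \to \Box\Box A$, and the $T$-axiom for $\Box$ closes the chain. Then \eqref{R'1} follows from \eqref{R2} and \eqref{T0}, and \eqref{R'2} from \eqref{R'1} combined with \eqref{A2}. For \eqref{T1}, I take the instance of \eqref{A5} with inputs $t$, $A$, whose left conjunct gives $t\co A \to {!t}\co(t\co A)$, together with the instance with inputs $!t$, $t\co A$, whose right conjunct gives ${!t}\co(t\co A) \to K(t\co A)$; chaining these yields $t\co A \to Kt\co A$, and composing with \eqref{T0} gives \eqref{T2}. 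Item \eqref{T3} is the composition of the $4$-axiom for $K$ from \eqref{A7} (which gives $KA \to KKA$) with the \eqref{T0}-instance $KKA \to \Box KA$, and \eqref{T4} is the parallel composition of \eqref{B11} with the \eqref{T0}-instance $KProven(t,A) \to \Box Proven(t,A)$.

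The main obstacle is \eqref{T5}, for which no single axiom suffices. Here I first establish the auxiliary implication
\begin{equation*}
(\star)\quad \neg Prove(i_1,t,A) \to \Diamond\bigwedge_{k=1}^n\neg Prove(i_k,t,A)
\end{equation*}
by chaining \eqref{B13} with the dual $\langle i_1\rangle\phi \to \Diamond\phi$ of \eqref{A2} and then dropping the conjuncts indexed outside $\{i_1,\ldots,i_n\}$. The complementary case is driven by \eqref{B9}, which supplies $Prove(i_1,t,A) \to \Diamond\neg Prove(i_1,t,A)$; to close it I need to upgrade this $\Diamond$ one step further, by invoking the standard derivable monotonicity of $\Diamond$ (a consequence of \eqref{R'1} and the $K$-axiom for $\Box$) applied to $(\star)$, and then collapsing $\Diamond\Diamond$ to $\Diamond$ via the $4$-axiom of \eqref{A1}. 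A propositional case split on $Prove(i_1,t,A)$ against its negation then merges both branches into \eqref{T5}; this pushing of $(\star)$ through a $\Diamond$ is the only place where the argument departs from direct axiom-chaining.
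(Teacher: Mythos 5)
Your derivations of \eqref{R'1}--\eqref{T4} coincide with the paper's, and your treatment of \eqref{T5} is a correct variant of it: the paper first proves the case $n=1$ from \eqref{B9} together with the contraposed T-axiom for $\Box$, and then reduces the general case to it by deriving $\Box(\bigvee_{k=1}^n Prove(i_k,t,A)) \to \Box Prove(i_1,t,A)$ from \eqref{B13} and \eqref{A2}, whereas you derive $\Diamond\bigwedge_{k=1}^n\neg Prove(i_k,t,A)$ outright by a case split on $Prove(i_1,t,A)$, using \eqref{B13} in one branch and \eqref{B9} plus the $\Diamond\Diamond$-collapse in the other. Both routes rest on the same two axioms and the same propositional case split, and yours goes through once the derived rules it needs ($\Diamond$-monotonicity, replacement under $\Box$) are available. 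The one genuine problem is in your proof of \eqref{T0}, on which everything else depends. You invoke ``$\Box$-necessitation (part of the $S5$ package in \eqref{A1})'' to pass from $K\Box A \to \Box A$ to $\Box K\Box A \to \Box\Box A$. But \eqref{A1} supplies only axiom \emph{schemes}; the primitive rules of $\Pi$ are \eqref{R1}, \eqref{R2}, \eqref{RCS} and \eqref{S4}, and necessitation for $\Box$ is precisely the derived rule \eqref{R'1} --- which you, like the paper, obtain \emph{from} \eqref{T0} via \eqref{R2}. As written, your argument for \eqref{T0} is therefore circular.

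The repair is immediate and is the paper's own route: do not box the implication $K\Box A \to \Box A$ at all. Instead, apply the T-axiom for $\Box$ from \eqref{A1} directly to the consequent of \eqref{A8} to get $\Box K\Box A \to K\Box A$, and the T-axiom for $K$ from \eqref{A7} to get $K\Box A \to \Box A$; chaining these with $KA \to \Box K\Box A$ uses only \eqref{R1} and classical logic. With \eqref{T0} so established, \eqref{R'1} and \eqref{R'2} become available exactly as you describe, and the $\Diamond$-manipulations in your \eqref{T5} argument are then legitimate.
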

\begin{proof}
\eqref{T0}. We use the transitivity of implication w.r.t. the
following set of formulas:
\begin{align}
    &KA \to \Box K\Box A\label{E:z3} &&\text{(by \eqref{A8})}\\
    &\Box K\Box A \to K\Box A\label{E:z4} &&\text{(by \eqref{A1})}\\
    &K\Box A \to \Box A\label{E:z5} &&\text{(by \eqref{A7})}
\end{align}
\eqref{R'1}. From $A$ we infer $KA$ by \eqref{R2} and then use
\eqref{T0} and modus ponens to get $\Box A$.

\eqref{R'2}. From $A$ we infer $\Box A$ by \eqref{R'1} and then
apply \eqref{A2} and modus ponens.

We pause to note that by \eqref{R'1} and \eqref{R'2} we know that
every modality in the set $\{\Box\} \cup \{ [j] \mid j \in Ag \}$
is an S5-modality.

\eqref{T1}. We have both $t\co A \to !t\co(t\co A)$ and
$!t\co(t\co A) \to Kt\co A$ by \eqref{A5} so that we get
\eqref{T1} by transitivity of implication.

\eqref{T2}. By \eqref{T1} and \eqref{T0}.

\eqref{T3}. By $KA \to KKA$ (a part of \eqref{A7}) and \eqref{T0}.

\eqref{T4}. By \eqref{B11} and \eqref{T0}.

\eqref{T5}. We first prove the theorem for $n = 1$. In this case,
note that we have $\Box Prove(i_1,t,A) \to Prove(i_1,t,A)$ by
\eqref{A1} whence by contraposition we get

\noindent$\neg Prove(i_1,t,A) \to \neg\Box Prove(i_1,t,A)$. We
also have $Prove(i_1,t,A) \to \neg\Box Prove(i_1,t,A)$ by
\eqref{B9}. By classical propositional logic we get then:
$$
(Prove(i_1,t,A) \vee \neg Prove(i_1,t,A)) \to \neg\Box
Prove(i_1,t,A),
$$
and, further $\neg\Box Prove(i_1,t,A)$, as desired. We now turn to
the general case and sketch the derivation as follows:
\begin{align}
    &\neg Prove(i_1,t,A) \to \langle i_1\rangle(\bigwedge_{j \in
    Ag}\neg Prove(j,t,A))\label{E:z6} &&\text{(by \eqref{B13})}\\
    &\langle i_1\rangle(\bigwedge_{j \in Ag}\neg Prove(j,t,A))
    \to \langle i_1\rangle(\bigwedge^n_{k = 1}\neg Prove(i_k,t,A))\label{E:z7} &&\text{($[i_1]$ is S5)}\\
    &\neg Prove(i_1,t,A) \to\langle i_1\rangle(\bigwedge^n_{k = 1}\neg Prove(i_k,t,A))\label{E:z8} &&\text{(by \eqref{E:z6} and \eqref{E:z7})}\\
    &[i_1](\bigvee^n_{k = 1}Prove(i_k,t,A)) \to Prove(i_1,t,A)\label{E:z9} &&\text{(by \eqref{E:z8}, contrap.)}\\
    &\Box(\bigvee^n_{k = 1}Prove(i_k,t,A)) \to Prove(i_1,t,A)\label{E:z10} &&\text{(by \eqref{E:z9}, \eqref{A2})}\\
    &\Box(\bigvee^n_{k = 1}Prove(i_k,t,A)) \to \Box Prove(i_1,t,A)\label{E:z11} &&\text{($\Box$ is S5)}
\end{align}
From \eqref{E:z11}, \eqref{T5} follows by the case for $n = 1$ and
classical propositional logic.
\end{proof}
Our second point is that the rule \eqref{S4} can be substituted by
an infinite array of axiomatic schemes without affecting the set
of provable formulas, which gives us, in effect, an alternative
axiomatization for the systems of the form $\Pi(\mathcal{CS})$.
More precisely, the following lemma holds:
\begin{lemma}\label{equivalent}
Let $\mathcal{CS}$ be a constant specification. Consider the
following axiomatic scheme:
\begin{align}
    &K(\neg Proven(t_1,B_1) \vee\ldots\vee \neg Proven(t_n,B_n)) \to\notag\\
    &\qquad\qquad\qquad\qquad\to(\bigwedge_{j \in Ag}\neg
Prove(j,t_1, B_1) \vee\ldots \vee \bigwedge_{j \in Ag}\neg
Prove(j,t_n, B_n))\label{AS}\tag{\text{$A_{S4}$}}
\end{align}
for arbitrary $t_1,\ldots, t_n \in Pol$ and $B_1,\ldots, B_n \in
Form^{Ag}$. Let $\Pi'(\mathcal{CS})$ be the axiomatic system
obtained from $\Pi(\mathcal{CS})$ by replacing \eqref{S4} with
\eqref{AS}. Then, for every $A \in Form^{Ag}$, it is true that $A$
is provable in $\Pi(\mathcal{CS})$ iff $A$ is provable in
$\Pi'(\mathcal{CS})$
\end{lemma}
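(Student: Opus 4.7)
The plan is to prove the two inclusions between the theorem sets of $\Pi(\mathcal{CS})$ and $\Pi'(\mathcal{CS})$ separately, in each direction showing that the distinguishing feature of one system can be simulated inside the other. Since all other axioms and rules coincide, it suffices (by induction on the length of a proof) to show (i) that rule \eqref{S4} is admissible in $\Pi'(\mathcal{CS})$, and (ii) that every instance of scheme \eqref{AS} is provable in $\Pi(\mathcal{CS})$.

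For (i), suppose that in $\Pi'(\mathcal{CS})$ we have already derived the premise $KA \to (\neg Proven(t_1,B_1) \vee \ldots \vee \neg Proven(t_n,B_n))$. I would apply \eqref{R2} to this formula, then use the normality of $K$ (which is present in both systems, since it comes from \eqref{A7} and \eqref{R2}) to distribute $K$ across the implication, obtaining $KKA \to K(\neg Proven(t_1,B_1) \vee \ldots \vee \neg Proven(t_n,B_n))$. Composing this with the \eqref{A7} theorem $KA \to KKA$ gives $KA \to K(\neg Proven(t_1,B_1) \vee \ldots \vee \neg Proven(t_n,B_n))$. Then the appropriate instance of \eqref{AS} yields the desired conclusion of \eqref{S4} by transitivity of implication.

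For (ii), I would start from the $T$-axiom instance $K(\neg Proven(t_1,B_1) \vee \ldots \vee \neg Proven(t_n,B_n)) \to (\neg Proven(t_1,B_1) \vee \ldots \vee \neg Proven(t_n,B_n))$, which is part of \eqref{A7}. This is exactly a formula of the shape required as a premise of \eqref{S4}, with $A$ taken to be the disjunction $\neg Proven(t_1,B_1) \vee \ldots \vee \neg Proven(t_n,B_n)$. Applying \eqref{S4} to it produces precisely the corresponding instance of \eqref{AS}.

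Both steps are essentially routine once one notices the symmetry between the premise format of \eqref{S4} and the antecedent of \eqref{AS}; the mild obstacle is just keeping track of the fact that $K$ satisfies both necessitation and the $4$-axiom, so that the premise of \eqref{S4} can be ``boxed up'' into the antecedent of \eqref{AS} without strengthening the hypothesis. No appeal to semantics is needed, and the constant specification $\mathcal{CS}$ plays no role beyond contributing the rule \eqref{RCS}, which is shared by both systems.
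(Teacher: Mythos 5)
Your proposal is correct and follows essentially the same route as the paper: the \eqref{AS}-to-$\Pi(\mathcal{CS})$ direction is obtained by applying \eqref{S4} to the \eqref{A7} instance $KB \to B$ with $B$ the disjunction of negated $Proven$-formulas, and the admissibility of \eqref{S4} in $\Pi'(\mathcal{CS})$ is obtained by necessitating the premise, distributing $K$, composing with $KA \to KKA$, and then chaining with the relevant instance of \eqref{AS}. Nothing further is needed.
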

\begin{proof} ($\Leftarrow$) Assume that $t_1,\ldots, t_n \in Pol$ and $B_1,\ldots, B_n \in
Form^{Ag}$. Then, setting:
$$
B:=\neg Proven(t_1,B_1) \vee\ldots\vee \neg Proven(t_n,B_n),
$$
it suffices to note that the respective instance of \eqref{AS} is
provable in $\Pi(\mathcal{CS})$ by one application of \eqref{S4}
to $KB \to B$, which itself is an axiom by \eqref{A7}.

($\Rightarrow$). Assume that $t_1,\ldots, t_n \in Pol$ and
$B_1,\ldots, B_n \in Form^{Ag}$. Then, let $B$ be as above and
set:
$$
C := \bigwedge_{j \in Ag}\neg Prove(j,t_1, B_1) \vee\ldots \vee
\bigwedge_{j \in Ag}\neg Prove(j,t_n, B_n).
$$
Then note that every transition from $KD \to B$ to $KD \to C$
according to \eqref{S4} can be replaced with a proof in
$\Pi'(\mathcal{CS})$ sketched below:
\begin{align}
    &KD \to B\label{E:pr1} &&\textup{(premise)}\\
    &K(KD \to B)\label{E:pr2} &&\textup{(by \eqref{E:pr1} and \eqref{R2})}\\
    &KD \to KB\label{E:pr3} &&\textup{(by \eqref{E:pr2}, \eqref{A7}, and \eqref{R1})}\\
    &KB \to C\label{E:pr4} &&\textup{(by \eqref{AS})}\\
    &KD \to C\label{E:pr5} &&\textup{(by \eqref{E:pr3}, \eqref{E:pr4}, and \eqref{A0})}
\end{align}
\end{proof}

We are now prepared to formulate our main result:

\begin{theorem}\label{completeness} Let $\Gamma \subseteq
Form^{Ag}$ and let $\mathcal{CS}$ be a constant specification.
Then $\Gamma$ is $\mathcal{CS}$-consistent iff it is satisfiable
in a $\mathcal{CS}$-normal jstit model iff it is satisfiable in a
unirelational $\mathcal{CS}$-normal jstit model.
\end{theorem}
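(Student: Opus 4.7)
The right-to-left directions of the equivalence follow immediately from Theorem \ref{soundness}: if $\Gamma$ were $\mathcal{CS}$-inconsistent, some finite conjunction $A_1\wedge\ldots\wedge A_n\to\bot$ with $A_i\in\Gamma$ would be $\mathcal{CS}$-provable and hence valid over the class of $\mathcal{CS}$-normal jstit models, contradicting satisfiability of $\Gamma$ in such a model (and \emph{a fortiori} in a unirelational one). The substantive direction is to show that every $\mathcal{CS}$-consistent $\Gamma$ is satisfiable in a unirelational $\mathcal{CS}$-normal jstit model; since the unirelational class is a subclass of the full $\mathcal{CS}$-normal class, this also settles the remaining subsumption.

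The plan is a canonical-model argument, closely following the JA-STIT construction from \cite{OLexpl-gen}, which Section \ref{canonicalmodel} will carry out in detail. First, extend $\Gamma$ to a $\mathcal{CS}$-maxiconsistent set $\Gamma_0$. Then build a unirelational canonical model $\mathcal{M}^c$ whose moments are suitably structured tuples (a maxiconsistent set of moment-determinate formulas together with auxiliary data coding the $Choice$-partitions of every agent, the histories passing through the moment, and the proofs presented along each such history), and whose order $\unlhd$ is defined by a canonical extension relation. Set $R=R_e$ by $R(m,m')$ iff $\{A\mid KA\in m\}\subseteq m'$; $\mathcal{E}(m,t)=\{A\mid t\co A\in m\}$; define $Choice^m_j$ via $[j]$-congruence on histories at $m$; define $V$ canonically on propositional atoms; and define $Act(m,h)$ to collect those $t\in Pol$ for which either $Proven(t,A)\in m$ or $Prove(j,t,A)$ belongs to the history-dependent component of $(m,h)$ for some agent $j$ and some $A$.

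Verification of the jstit constraints then breaks into familiar and less familiar parts. Constraints $1$--$4$ come from the stit skeleton and are driven by \eqref{A1}--\eqref{A3}; constraints $5$, $6$, $10$, $11$ come from the justification skeleton and are driven by \eqref{A4}--\eqref{A8} together with \eqref{T0} and \eqref{B11}; constraints $7$, $9$ follow from the definition of $Act$ combined with \eqref{B9}, \eqref{B10}, \eqref{B12}. The single hard obstacle is constraint $8$, the \emph{no new proofs guaranteed} clause: it demands that every $t\in Act_m$ already appears in some $Act(m',h)$ at a strict past $m'\lhd m$. This is exactly what the non-standard rule \eqref{S4} is designed to enforce, and the construction must be staged so that introducing a new polynomial $t$ into $Act_m$ (equivalently, inserting some $Proven(t,A)$ into the label of $m$) is always accompanied by an instance of $\bigvee_j Prove(j,t,A)$ at a chosen predecessor of $m$. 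The correct bookkeeping to achieve this, avoiding contradiction with the $K(\bigvee_i\neg Proven(t_i,B_i))$ formulas forced into the ancestors' labels, is the crux of the proof and will rely on Lemma \ref{equivalent} so that \eqref{AS} can be used as the working form of \eqref{S4} throughout the construction.

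Finally, prove a truth lemma by induction on $A\in Form^{Ag}$. Boolean and $\Box,[j],K$ cases are standard Hintikka-style arguments using the axioms and rules listed (together with \eqref{A3} for the required independence of agents in the $\Diamond[j]$ case). The $t\co A$ case uses the unirelational simplification noted after Lemma \ref{determinate}, together with the definition of $\mathcal{E}$ and the evidence closure axioms \eqref{A4}--\eqref{A6}. The $Proven$ and $Prove$ cases unpack the definition of $Act$ and appeal to \eqref{B9}--\eqref{B13}, with \eqref{B13} used crucially to make the $Prove$-case match the quantification over $Choice^m_j(h)$. Applying the truth lemma at the moment-history pair whose label extends $\Gamma_0$ yields satisfaction of $\Gamma$ in the unirelational $\mathcal{CS}$-normal jstit model $\mathcal{M}^c$, completing the proof.
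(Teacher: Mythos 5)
Your overall architecture coincides with the paper's: the right-to-left implications are exactly Corollary \ref{c-soundness}, and the substantive direction is proved by extending $\Gamma$ to a $\mathcal{CS}$-maxiconsistent set, building a unirelational canonical model, proving a truth lemma, and reading off satisfaction of $\Gamma$ at the moment--history pair determined by that set (Lemma \ref{sqcap} plus Lemma \ref{truth}). Up to that level of description you and the paper agree.

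The gap is in the one place you yourself call the crux. The mechanism you propose for the \emph{no new proofs guaranteed} constraint --- arranging that whenever some $Proven(t,A)$ enters the label of $m$, an instance of $\bigvee_{j}Prove(j,t,A)$ is planted at a chosen predecessor --- cannot be carried out: a maxiconsistent set may contain $Proven(t,A)$ together with $\bigwedge_{j}\neg Prove(j,t,A)$, and nothing in $\Pi(\mathcal{CS})$ lets you consistently push a $Prove$-formula into the past of such a set; $Proven$ propagates only \emph{forward}, via \eqref{B11}. The paper dissolves the problem structurally rather than proof-theoretically: moments are pairs (core, height), the moments sharing a core form an infinite strictly $\lhd$-descending chain as the height grows, and $Act$ at positive heights is defined from $Proven$-membership alone; hence every standard $m$ with $t\in Act_m$ has a strict predecessor $m'$ with the very same end sets, so $t\in Act(m',h)$ comes for free (Lemma \ref{act}, via Lemma \ref{act-technical}.1), with no appeal to \eqref{S4}. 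Consequently you have also mislocated the rule \eqref{S4}: in the paper it works in the opposite temporal direction, namely in Lemma \ref{elementcontinuation}, where it is needed to show that $\{KA\mid KA\in\Gamma_n\}\cup\{Proven(t,A)\mid\exists j\,(Prove(j,t,A)\in\Gamma_n)\}$ is consistent, so that the finite sequences of maxiconsistent sets out of which moments are built can always be extended forward; without that, histories cannot be constructed at all. Your sketch leaves this step, which is where the non-standard rule actually earns its keep, unaccounted for.
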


One part of the completeness results we have, of course, right
away, as a consequence of Theorem \ref{soundness}:

\begin{corollary}\label{c-soundness}
Let $\Gamma \subseteq Form^{Ag}$ and let $\mathcal{CS}$ be a
constant specification. If $\Gamma$ is satisfiable in a
$\mathcal{CS}$-normal (unirelational) jstit model, then $\Gamma$
is $\mathcal{CS}$-consistent.
\end{corollary}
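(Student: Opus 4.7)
The proof is an immediate consequence of the soundness theorem, so the plan is just to unpack the definitions of $\mathcal{CS}$-consistency and apply Theorem \ref{soundness} contrapositively. First I would assume that $\Gamma$ is satisfied at some $(m,h) \in MH(\mathcal{M})$ in a $\mathcal{CS}$-normal jstit model $\mathcal{M}$, and then argue by \emph{reductio}: if $\Gamma$ were $\mathcal{CS}$-inconsistent, then by the definition of $\mathcal{CS}$-inconsistency one could pick finitely many $A_1,\ldots,A_n \in \Gamma$ with $\vdash_{\mathcal{CS}} (A_1 \wedge \ldots \wedge A_n) \to \bot$.

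Next I would invoke Theorem \ref{soundness} to conclude that the formula $(A_1 \wedge \ldots \wedge A_n) \to \bot$ is valid over the class of $\mathcal{CS}$-normal jstit models, hence in particular satisfied at $(m,h)$ in $\mathcal{M}$. But by assumption every $A_i$ is satisfied at $(m,h)$, so modus ponens at the semantic level forces $\bot$ to be satisfied at $(m,h)$, which is impossible. This gives the desired contradiction and shows that $\Gamma$ must be $\mathcal{CS}$-consistent.

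For the unirelational clause there is nothing extra to do: any unirelational $\mathcal{CS}$-normal jstit model is in particular a $\mathcal{CS}$-normal jstit model (it merely satisfies the additional constraint $R_e \subseteq R$), so satisfiability of $\Gamma$ in a unirelational $\mathcal{CS}$-normal model implies satisfiability in a $\mathcal{CS}$-normal model, and the argument of the previous paragraph applies verbatim. There is no real obstacle here; the only thing to be mildly careful about is that $\mathcal{CS}$-inconsistency was defined via a conjunction $A_1 \wedge \ldots \wedge A_n \to \bot$ rather than via a syntactic derivation of $\bot$ from $\Gamma$, so I would be explicit that the finite conjunction of elements of $\Gamma$ is itself satisfied at $(m,h)$, which is immediate from the clause for $\wedge$ in the semantics.
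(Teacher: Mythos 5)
Your proposal is correct and follows essentially the same argument as the paper: assume satisfaction at some $(m,h)$, suppose for \emph{reductio} that some finite conjunction $A_1 \wedge\ldots\wedge A_n$ of members of $\Gamma$ provably implies $\bot$, apply Theorem \ref{soundness} to get $\mathcal{M}, m, h \models (A_1 \wedge\ldots \wedge A_n) \to \bot$, and derive the impossible $\mathcal{M}, m, h \models \bot$. Your explicit remark that the unirelational case needs no separate treatment matches the paper's parenthetical ``either unirelational or not,'' so the two proofs coincide in substance.
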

\begin{proof}
Let $\Gamma \subseteq Form^{Ag}$ be satisfiable in a
$\mathcal{CS}$-normal jstit model $\mathcal{M}$, either
unirelational or not. Then for some $(m,h) \in MH(\mathcal{M})$ we
have $\mathcal{M}, m, h \models \Gamma$. If $\Gamma$ were
$\mathcal{CS}$-inconsistent this would mean that for some
$A_1,\ldots,A_n \in \Gamma$ we would have $\vdash_{\mathcal{CS}}
(A_1 \wedge\ldots \wedge A_n) \to \bot$. By Theorem
\ref{soundness}, this would mean that:
$$
\mathcal{M}, m, h \models (A_1 \wedge\ldots \wedge A_n) \to \bot,
$$
whence clearly $\mathcal{M}, m, h \models \bot$, which is
impossible. Therefore, $\Gamma$ must be $\mathcal{CS}$-consistent.
\end{proof}

\section{The canonical model}\label{canonicalmodel}

We begin by fixing an arbitrary constant specification
$\mathcal{CS}$ throughout the present section. The main aim of the
section is to prepare the proof of the inverse of Corollary
\ref{c-soundness}. The method used is a variant of the canonical
model technique, but, due to the complexity of the case, we do not
define our model in one sweeping definition. Rather, we proceed
piecewise, defining elements of the model one by one, and checking
the relevant constraints as soon, as we have got enough parts of
the model in place. The last subsection proves the truth lemma for
the defined model. As we have already indicated, the model to be
built will be a normal unirelational jstit model, so that $R_e$
will be omitted, or, equivalently, assumed to coincide with $R$.
It should also be noted that our definitions of stit- and
justifications-related components of the canonical model are
borrowed to the last letter from the construction of the canonical
model for JA-STIT given in \cite{OLexpl-gen}. Even though the
basic building blocks for our current case are somewhat different
from those used for JA-STIT case, this does not affect the proofs
of the respective lemmas in the least. Therefore, we omit the
proofs of almost every lemma claimed in subsection \ref{borrowed}
and replace them with the following table bringing the lemmas in
question into correspondence with the respective lemmas of
\cite{OLexpl-gen} so that the reader may look up the proofs
whenever this is called for.

\begin{center}
 \begin{tabular}{|c|c|}
 \hline
  Numbering given in subsection \ref{borrowed}  & Reference to \cite{OLexpl-gen} \\
 \hline
 Lemma \ref{leq} &  Lemma 4\\
 Lemma \ref{injective}& Lemma 5\\
Lemma \ref{structure} &  Lemma 9\\
 Lemma \ref{sqcap}& Lemma 10\\
 Lemma \ref{choice}& Lemma 12\\
 Lemma \ref{r}& Lemma 14\\
 Lemma \ref{e}& Lemma 15\\
 \hline
\end{tabular}
\end{center}

The canonical model to be constructed below will be a
$\mathcal{CS}$-normal jstit model named
$\mathcal{M}^{Ag}_{\mathcal{CS}}$. The ultimate building blocks of
$\mathcal{M}^{Ag}_{\mathcal{CS}}$ we will call \emph{elements}.
Before going on with the definition of
$\mathcal{M}^{Ag}_{\mathcal{CS}}$, we define what these elements
are and explore some of their properties.

\begin{definition}\label{element}
An \emph{element} is a sequence of the form
$(\Gamma_1,\ldots,\Gamma_n)$ for some $n\in \omega$ with $n \geq
1$ such that:
\begin{itemize}
\item For every $k \leq n$, $\Gamma_k$ is a
$\mathcal{CS}$-maxiconsistent subset of $Form^{Ag}$;

\item For every $k < n$, for all $A \in Form^{Ag}$, if $KA \in
\Gamma_k$, then $KA \in \Gamma_{k + 1}$;

\item For every $k < n$, for all $t \in Pol$, $A \in Form^{Ag}$,
and $j \in Ag$, if $ Prove(j,t,A) \in \Gamma_k$, then $Proven(t,A)
\in \Gamma_{k + 1}$.
\end{itemize}
\end{definition}

We prove the following lemma:
\begin{lemma}\label{elementcontinuation}
Whenever $(\Gamma_1,\ldots,\Gamma_n)$ is an element, there exists
a $\Gamma_{n + 1} \subseteq Form^{Ag}$ such that the sequence
$(\Gamma_1,\ldots,\Gamma_{n + 1})$ is also an element.
\end{lemma}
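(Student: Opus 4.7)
The plan is to construct $\Gamma_{n+1}$ by taking the obvious ``forced'' set
\[
\Gamma^*_{n+1} := \{ KA \mid KA \in \Gamma_n \} \cup \{ Proven(t,A) \mid \exists j \in Ag\colon Prove(j,t,A) \in \Gamma_n \},
\]
showing that it is $\mathcal{CS}$-consistent, and then extending it to a $\mathcal{CS}$-maxiconsistent set by the usual Lindenbaum construction. Once this is done, the three clauses of Definition \ref{element} hold for the extended sequence $(\Gamma_1,\ldots,\Gamma_n,\Gamma_{n+1})$ by construction (maxiconsistency from Lindenbaum, and the $K$- and $Prove/Proven$-transfer conditions because the relevant formulas already sit in $\Gamma^*_{n+1} \subseteq \Gamma_{n+1}$).

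The only real content is $\mathcal{CS}$-consistency of $\Gamma^*_{n+1}$. I would argue by contradiction: if $\Gamma^*_{n+1}$ is inconsistent, then by compactness of the proof calculus some finite subset $\{KA_1,\ldots,KA_m\} \cup \{Proven(t_1,B_1),\ldots,Proven(t_k,B_k)\}$ of it is inconsistent, where each $KA_i \in \Gamma_n$ and each $Prove(j_\ell,t_\ell,B_\ell) \in \Gamma_n$ for suitable agents $j_\ell$. Classically this gives
\[
\vdash_{\mathcal{CS}} (KA_1 \wedge \ldots \wedge KA_m) \to (\neg Proven(t_1,B_1) \vee \ldots \vee \neg Proven(t_k,B_k)).
\]
Using the standard normal-modal-logic fact that $K$ distributes over conjunction (provable from \eqref{A7} and \eqref{R2}), the antecedent is equivalent to $KA$ with $A := A_1 \wedge \ldots \wedge A_m$. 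Hence we obtain a theorem of the form
\[
\vdash_{\mathcal{CS}} KA \to (\neg Proven(t_1,B_1) \vee \ldots \vee \neg Proven(t_k,B_k)),
\]
which is exactly the premise of the inference rule \eqref{S4}. Applying \eqref{S4} yields
\[
\vdash_{\mathcal{CS}} KA \to \Big(\bigwedge_{j \in Ag}\neg Prove(j,t_1,B_1) \vee \ldots \vee \bigwedge_{j \in Ag}\neg Prove(j,t_k,B_k)\Big).
\]
Since $KA \in \Gamma_n$ and $\Gamma_n$ is $\mathcal{CS}$-maxiconsistent, the consequent is in $\Gamma_n$, so for some $\ell$ we have $\bigwedge_{j \in Ag} \neg Prove(j,t_\ell,B_\ell) \in \Gamma_n$, and in particular $\neg Prove(j_\ell,t_\ell,B_\ell) \in \Gamma_n$. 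This contradicts $Prove(j_\ell,t_\ell,B_\ell) \in \Gamma_n$ together with maxiconsistency.

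The main obstacle is precisely arranging the argument so that rule \eqref{S4} is applicable: one has to massage the finitely many hypothetical members of $\Gamma^*_{n+1}$ into the exact syntactic shape $KA \to (\neg Proven(t_1,B_1)\vee\ldots\vee\neg Proven(t_k,B_k))$ that \eqref{S4} expects. The normal-modal-logic bundling of several $KA_i$'s into a single $K(A_1\wedge\ldots\wedge A_m)$ is the key technical step that makes this possible; the rest of the verification is routine.
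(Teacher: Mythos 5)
Your proposal is correct and follows essentially the same route as the paper: the same ``forced'' set of $K$- and $Proven$-formulas, the same consistency argument via bundling the $KA_i$ into a single $K(A_1\wedge\ldots\wedge A_m)$ so that rule \eqref{S4} applies, and the same Lindenbaum extension. The only cosmetic difference is that the paper first disposes of the degenerate case where the putative inconsistency involves no $Proven$-formulas (trivial, since the $K$-part is a subset of $\Gamma_n$), a case your argument should note explicitly since \eqref{S4} needs at least one $Proven$-disjunct to fire.
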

\begin{proof}
Assume $(\Gamma_1,\ldots,\Gamma_n)$ is an element and consider the
following set:
$$
\Delta := \{ KA \mid KA \in \Gamma_n \} \cup \{ Proven(t,A) \mid
(\exists j \in Ag)(Prove(j,t,A) \in \Gamma_n) \}.
$$

We show that $\Delta$ is $\mathcal{CS}$-consistent. Of course, the
set $\{ KA \mid KA \in \Gamma_n \}$ is $\mathcal{CS}$-consistent
since it is a subset of $\Gamma_n$ and the latter is assumed to be
$\mathcal{CS}$-consistent. Further, if $\Delta$ is
$\mathcal{CS}$-inconsistent, then, wlog, for some $B_1,\ldots,
B_k, C_ 1,\ldots, C_l \in Form^{Ag}$, $t_1,\ldots, t_l \in Ag$,
and $j_1, \ldots, j_l \in Ag$ such that $KB_1,\ldots, KB_k$ and
$Prove(j_1,t_1, C_1),\ldots, Prove(j_l,t_l,C_l)$ are in $\Gamma_n$
we will have:
$$
\vdash_\mathcal{CS}(KB_1\wedge\ldots \wedge KB_r) \to (\neg
Proven(t_1,C_1) \vee\ldots \vee \neg Proven(t_l,C_l)),
$$
whence, by \eqref{A7}:
$$
\vdash_\mathcal{CS} K(B_1\wedge\ldots \wedge B_r) \to (\neg
Prove(t_1,C_1) \vee\ldots \vee \neg Proven(t_l,C_l)),
$$
and further, by \eqref{S4}:
$$
\vdash_\mathcal{CS} K(B_1\wedge\ldots \wedge B_r) \to
(\bigwedge_{j \in Ag}\neg Prove(j,t_1, C_1) \vee\ldots \vee
\bigwedge_{j \in Ag}\neg Prove(j,t_l, C_l)),
$$
whence, by \eqref{A0} and \eqref{R1} we get that:
$$
\vdash_\mathcal{CS} K(B_1\wedge\ldots \wedge B_r) \to (\neg
Prove(j_1,t_1, C_1) \vee\ldots \vee \neg Prove(j_l,t_l, C_l)).
$$
The latter formula shows that $\Gamma_n$ is
$\mathcal{CS}$-inconsistent which contradicts the assumption that
$(\Gamma_1,\ldots,\Gamma_n)$ is an element.

Therefore, $\Delta$ must be $\mathcal{CS}$-consistent, and is also
extendable to a $\mathcal{CS}$-maxiconsistent $\Gamma_{n + 1}$. By
the choice of $\Delta$, this means that
$(\Gamma_1,\ldots,\Gamma_n, \Gamma_{n + 1})$ must be an element.
\end{proof}
The structure of elements will be important in what follows. If
$\xi = (\Gamma_1,\ldots, \Gamma_n)$ is an element and an element
$\tau$ is of the form $(\Gamma_1,\ldots, \Gamma_{k})$ with $k <
n$, we say that $\tau$ is a \emph{proper} initial segment of
$\xi$. Moreover, if $k = n -1$, then $\tau$ is the \emph{greatest}
proper initial segment of $\xi$. We define $n$ to be  the
\emph{length} of $\xi$. Furthermore, we define that $\Gamma_n$ is
the end element of $\xi$ and write $\Gamma_n = end(\xi)$.

We now define the canonical model using elements as our building
blocks. We start by defining the following relation $\equiv$:

\begin{align*}
(\Gamma_1,\ldots, \Gamma_n, \Gamma_{n + 1}) \equiv
(\Delta_1,\ldots,& \Delta_n, \Delta_{n + 1}) \Leftrightarrow
(\Gamma_1 = \Delta_1 \& \ldots \& \Gamma_n = \Delta_n
\&\\
 &\& (\forall A \in Form^{Ag})(\Box A \in \Gamma_{n + 1} \Rightarrow A
\in \Delta_{n + 1}).
\end{align*}

It is routine to check that $\equiv$ is an equivalence relation
given that $\Box$ is an S5 modality. The notation
$[(\Gamma_1,\ldots, \Gamma_n)]_\equiv$ will denote the
$\equiv$-equivalence class generated by $(\Gamma_1,\ldots,
\Gamma_n)$. Since all the elements inside a given
$\equiv$-equvalence class are of the same length, we may extend
the notion of length to these classes setting that the length of
$[(\Gamma_1,\ldots, \Gamma_n)]_\equiv$ also equals $n$.

The next two lemmas will be repeatedly used in what follows:
\begin{lemma}\label{element}
Let $(\Gamma_1,\ldots,\Gamma_n, \Gamma)$ be an element, let
$\Delta \subseteq Form^{Ag}$ be $\mathcal{CS}$-maxiconsistent and
let:
$$
\{ \Box A \mid \Box A \in \Gamma \} \subseteq \Delta.
$$
Then $(\Gamma_1,\ldots,\Gamma_n, \Delta)$ is also an element, and,
moreover:
$$
(\Gamma_1,\ldots,\Gamma_n, \Gamma) \equiv
(\Gamma_1,\ldots,\Gamma_n, \Delta).
$$
\end{lemma}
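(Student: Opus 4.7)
The plan is to verify the two assertions separately, using the hypothesis that $\Box$-formulas in $\Gamma$ transfer to $\Delta$ and leveraging theorems of $\Pi$ established in Lemma \ref{p-theorems}.

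First, to show $(\Gamma_1,\ldots,\Gamma_n,\Delta)$ is an element, I would check the three conditions of Definition \ref{element}. The first condition (maxiconsistency of each component) is immediate, since the $\Gamma_k$ are inherited from the original element and $\Delta$ is maxiconsistent by hypothesis. The second and third conditions for indices $k < n$ are likewise inherited, since only $\Gamma_{n+1}$ has been replaced. Thus, the only thing to check is the propagation from $\Gamma_n$ to $\Delta$: namely, that $KA \in \Gamma_n$ implies $KA \in \Delta$, and $Prove(j,t,A) \in \Gamma_n$ implies $Proven(t,A) \in \Delta$.

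For the $K$-propagation: since $(\Gamma_1,\ldots,\Gamma_n,\Gamma)$ is an element, $KA \in \Gamma_n$ entails $KA \in \Gamma$. By theorem \eqref{T3}, maxiconsistency of $\Gamma$ gives $\Box KA \in \Gamma$, hence $\Box KA \in \Delta$ by the hypothesis, and then $KA \in \Delta$ by the T-axiom for $\Box$ (a part of \eqref{A1}) together with the maxiconsistency of $\Delta$. Completely analogously, for the $Prove$-to-$Proven$ clause, if $Prove(j,t,A) \in \Gamma_n$, then $Proven(t,A) \in \Gamma$ (as $(\Gamma_1,\ldots,\Gamma_n,\Gamma)$ is an element); by theorem \eqref{T4} we get $\Box Proven(t,A) \in \Gamma$, then $\Box Proven(t,A) \in \Delta$ by hypothesis, and finally $Proven(t,A) \in \Delta$ via the T-axiom.

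Second, to establish the equivalence $(\Gamma_1,\ldots,\Gamma_n,\Gamma) \equiv (\Gamma_1,\ldots,\Gamma_n,\Delta)$, the equality of the first $n$ components is trivial, and it only remains to verify that $\Box A \in \Gamma$ implies $A \in \Delta$ for arbitrary $A \in Form^{Ag}$. But this is immediate: $\Box A \in \Gamma$ gives $\Box A \in \Delta$ by the hypothesis, and then $A \in \Delta$ follows from the T-axiom for $\Box$ and the maxiconsistency of $\Delta$. No technical obstacle is anticipated; the only delicate point is remembering that the hypothesis transfers only the $\Box$-prefixed formulas, so one has to invoke \eqref{T3} and \eqref{T4} to reduce $K$- and $Proven$-statements to their $\Box$-wrapped versions before the hypothesis can be applied.
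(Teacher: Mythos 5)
Your proof is correct and follows essentially the same route as the paper: use the element property to push $KA$ and $Proven(t,A)$ into $\Gamma$, box them via \eqref{T3} and \eqref{T4} so the hypothesis applies, and then unbox in $\Delta$ with the T-axiom from \eqref{A1}; the equivalence claim is handled identically. (Your citation of \eqref{T3} for $KA \to \Box KA$ is in fact the right one; the paper's proof cites \eqref{T2} at that point, which appears to be a typo.)
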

\begin{proof}
We first show that $(\Gamma_1,\ldots,\Gamma_n, \Delta)$ is an
element. Indeed, if $KA \in \Gamma_n$, then $KA \in \Gamma$ by
definition of an element. But then $\Box KA \in \Gamma$ by
\eqref{T2} and $\mathcal{CS}$-maxiconsistency of $\Gamma$, whence
$\Box KA \in \Delta$. By \eqref{A1} and
$\mathcal{CS}$-maxiconsistency of $\Delta$ we get then $KA \in
\Delta$. Similarly, if $Prove(j,t,A) \in \Gamma_n$, then
$Proven(t,A) \in \Gamma$ by definition of an element. But then
$\Box Proven(t,A) \in \Gamma$ by \eqref{T4} and
$\mathcal{CS}$-maxiconsistency of $\Gamma$, whence $\Box
Proven(t,A) \in \Delta$.

Given the inclusion $\{ \Box A \mid \Box A \in \Gamma \} \subseteq
\Delta$, the other part of the lemma is straightforward.
\end{proof}
\begin{lemma}\label{element-new}
Let $\xi$ be an element and let $A \in Form^{Ag}$. Then the
following statements hold:
\begin{enumerate}
\item $A \in \bigcap_{\tau \equiv \xi}end(\tau) \Leftrightarrow
\Box A \in end(\xi) \Leftrightarrow \Box A \in \bigcap_{\tau
\equiv \xi}end(\tau)$.

\item If $\vdash_\mathcal{CS} A \to \Box A$, then $A \in end(\xi)
\Leftrightarrow A \in \bigcap_{\tau \equiv \xi}end(\tau)
\Leftrightarrow \Box A \in \bigcap_{\tau \equiv \xi}end(\tau)$.
\end{enumerate}
\end{lemma}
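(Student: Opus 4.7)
My plan is to prove the two parts in order, with part 2 being essentially a corollary of part 1.

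For part 1, I would establish a cycle of three implications:
\emph{(a)} $A \in \bigcap_{\tau \equiv \xi}end(\tau) \Rightarrow \Box A \in end(\xi)$;
\emph{(b)} $\Box A \in end(\xi) \Rightarrow \Box A \in \bigcap_{\tau \equiv \xi}end(\tau)$;
\emph{(c)} $\Box A \in \bigcap_{\tau \equiv \xi}end(\tau) \Rightarrow A \in \bigcap_{\tau \equiv \xi}end(\tau)$.

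Implication \emph{(c)} is immediate from the T-axiom for $\Box$ (part of \eqref{A1}) together with $\mathcal{CS}$-maxiconsistency of each $end(\tau)$. Implication \emph{(b)} uses the 4-axiom for $\Box$: if $\Box A \in end(\xi)$, then $\Box\Box A \in end(\xi)$, so by the definition of $\equiv$ we have $\Box A \in end(\tau)$ for every $\tau\equiv\xi$.

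The main work is in \emph{(a)}, and this is the step I expect to be the main obstacle. I will prove its contrapositive: suppose $\Box A \notin end(\xi)$; then $\neg\Box A \in end(\xi)$ by $\mathcal{CS}$-maxiconsistency. I will build a witness $\tau \equiv \xi$ with $A \notin end(\tau)$. Write $\xi = (\Gamma_1,\ldots,\Gamma_{n+1})$ and consider the set
\[
\Sigma := \{\, B \mid \Box B \in \Gamma_{n+1}\,\} \cup \{\neg A\}.
\]
A standard argument using the K- and necessitation-style deduction for $\Box$ (here provided by \eqref{R'1} and \eqref{A1}) shows $\Sigma$ is $\mathcal{CS}$-consistent: a finite inconsistency $\Box B_1\wedge\cdots\wedge\Box B_k\to A$ would give $\vdash_\mathcal{CS}\Box B_1\wedge\cdots\wedge\Box B_k\to\Box A$ by the S5-properties noted in Lemma \ref{p-theorems}, contradicting $\neg\Box A\in\Gamma_{n+1}$. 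Extend $\Sigma$ to a $\mathcal{CS}$-maxiconsistent $\Delta$; the set of $\Box$-formulas in $\Gamma_{n+1}$ is contained in $\Delta$ (using $\vdash_\mathcal{CS}\Box B\to\Box\Box B$), so Lemma \ref{element} applies and $\tau := (\Gamma_1,\ldots,\Gamma_n,\Delta)$ is an element with $\tau\equiv\xi$. Since $\neg A\in\Delta = end(\tau)$, we have $A\notin end(\tau)$, completing \emph{(a)}.

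For part 2, assume $\vdash_\mathcal{CS} A \to \Box A$. Together with \eqref{A1} this gives $\vdash_\mathcal{CS} A \leftrightarrow \Box A$, so by $\mathcal{CS}$-maxiconsistency of the relevant sets, $A$ and $\Box A$ belong to the same ones, and the equivalences follow immediately from part 1.
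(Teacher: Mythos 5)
Your proposal is correct and follows essentially the same route as the paper: the key step in both is the contradiction/contrapositive argument that builds a witness element $(\Gamma_1,\ldots,\Gamma_n,\Delta)$ from a consistent set of the form $\{\,\text{boxed part of the end set}\,\}\cup\{\neg A\}$ via Lemma \ref{element}, with Part 2 reduced to Part 1 by $\vdash_{\mathcal{CS}} A \leftrightarrow \Box A$. The only differences are cosmetic (you arrange the three-way cycle slightly differently and your $\Sigma$ stores the unboxed formulas, recovering the boxed ones via the 4-axiom, and your displayed ``finite inconsistency'' should read $B_1\wedge\cdots\wedge B_k\to A$ to match your definition of $\Sigma$ — but the argument goes through either way).
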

\begin{proof}
(Part 1) If $\Box A \in \bigcap_{\tau \equiv \xi}end(\tau)$, then,
of course, $\Box A \in end(\xi)$, whence, by definition of
$\equiv$ we get that $A \in \bigcap_{\tau \equiv \xi}end(\tau)$.
On the other hand, if $A \in \bigcap_{\tau \equiv \xi}end(\tau)$,
then choose an arbitrary $\tau$ such that $\tau \equiv \xi$. If
$\Box A \in end(\tau)$, then we are done. Otherwise, we can obtain
a contradiction as follows. Consider the set:
$$
\Gamma = \{ \Box B \mid \Box B \in end(\tau) \} \cup \{\neg A\}.
$$
If $\Gamma$ were $\mathcal{CS}$-inconsistent, then we would have:
$$
\vdash_\mathcal{CS}(\Box B_1 \wedge\ldots\wedge \Box B_n) \to A
$$
for some $\Box B_1,\ldots, \Box B_n \in end(\tau)$, whence by S5
reasoning for $\Box$ we would also have that:
$$
\vdash_\mathcal{CS}(\Box B_1 \wedge\ldots\wedge \Box B_n) \to \Box
A.
$$
By $\mathcal{CS}$-maxiconsistency of $end(\tau)$ it would follow
then that $\Box A \in end(\tau)$, contrary to our assumption.
Therefore, $\Gamma$ is $\mathcal{CS}$-consistent and we can extend
it to a $\mathcal{CS}$-maxiconsistent $\Delta$. Consider the inner
structure of $\tau$. We must have $\tau =
(\Gamma_1,\ldots,\Gamma_n, end(\tau))$ for appropriate $n \geq 0$
and $\Gamma_1,\ldots,\Gamma_n \subseteq Form^{Ag}$. But then, by
Lemma \ref{element}, we must also have that
$(\Gamma_1,\ldots,\Gamma_n, \Delta)$ is an element, and, moreover,
that $(\Gamma_1,\ldots,\Gamma_n, \Delta) \equiv \tau \equiv \xi$.
But then, note that by $\Gamma \subseteq \Delta$ we have $\neg A
\in \Delta$, whence by $\mathcal{CS}$-consistency $A \notin
\Delta$ in contradiction with our assumption that $A \in
\bigcap_{\tau \equiv \xi}end(\tau)$. This contradiction shows that
for no $\tau \equiv \xi$ can we have $\Box A \notin end(\tau)$ so
that we must end up having $\Box A \in \bigcap_{\tau \equiv
\xi}end(\tau)$.

(Part 2). If $\vdash_\mathcal{CS} A \to \Box A$ then, by
\eqref{A1} and $\mathcal{CS}$-maxiconsistency of $end(\xi)$, we
will have $\Box A \in end(\xi) \Leftrightarrow A \in end(\xi)$,
and the rest follows by Part 1.
\end{proof}

We now proceed to definitions of components for the canonical
model.

\subsection{Stit and justification components}\label{borrowed}

The first two components of the canonical model
$\mathcal{M}^{Ag}_{\mathcal{CS}}$ are as follows:

\begin{itemize}
    \item $Tree = \{ \dag \} \cup \{ ([\xi]_\equiv, n)\mid n \in \omega,\,\xi\textup{ is an element}\}$.
    Thus the elements of $Tree$, with the exception of the special
    moment $\dag$, are $\equiv$-equivalence classes of elements
    coupled with natural numbers. Such moments we will call
    \emph{standard} moments, and the left projection of a
    standard moment $m$ we will call its \emph{core} (and write $\overrightarrow{m}$), while the
    right projection of such moment we will call its \emph{height} (and write
    $|m|$). In this way, we get the equality $m = (\overrightarrow{m}, |m|)$
    for every standard $m \in Tree$.
    We further define that the length of a standard moment $m$ is
    the length of its core. For the sake of completeness, we extend the above notions
    to $\dag$ setting both length and height of this moment to $0$ and defining that $\overrightarrow{\dag} = \dag$.
\item We set that $(\forall m \in Tree\setminus\{ \dag \})(\dag
\lhd m \& m \ntriangleleft \dag)$. We further set that for any two
standard moments $m$ and $m'$, we have that $m \lhd m'$ iff either
(1) there exists a $\xi \in
    \overrightarrow{m}$ such that for every $\tau\in
    \overrightarrow{m'}$, $\xi$ is a proper initial segment of $\tau$,
     or (2) $ \overrightarrow{m} =  \overrightarrow{m'}$ and $|m'| < |m|$.
    The relation $\unlhd$ is then defined as the reflexive
    companion to $\lhd$.
\end{itemize}

With this settings, we claim that the restraints imposed by our
semantics on $Tree$ and $\unlhd$ are satisfied:
\begin{lemma}\label{leq}
The relation $\unlhd$, as defined above, is a partial order on
$Tree$, which satisfies both historical connection and no backward
branching constraints.
\end{lemma}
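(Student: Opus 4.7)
The proof splits into three parts: (i)~$\unlhd$ is a partial order, (ii)~historical connection, (iii)~no backward branching. Throughout, calls to \textbf{clause (1)} and \textbf{clause (2)} refer to the two alternatives in the definition of $\lhd$ between two standard moments.

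\emph{Partial order.} Reflexivity is built into the definition of $\unlhd$. For antisymmetry, I suppose $m \lhd m'$ and $m' \lhd m$ with both standard (otherwise one of them is $\dag$ and the other is not, contradicting $m \ntriangleleft \dag$ for all $m$), and split on the witness clauses. Two applications of clause~(2) give $|m'| < |m| < |m'|$, absurd. If at least one is clause~(1), picking the clause-(1) witness $\xi \in \overrightarrow{m}$ and a suitable $\tau \in \overrightarrow{m'}$ (possibly $\xi$ itself, using that $\overrightarrow{m} = \overrightarrow{m'}$ in the mixed case), one finds that $\xi$ is a proper initial segment of some sequence of which $\xi$ itself is an initial segment, again absurd. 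Transitivity is a similar four-case analysis on the witnesses for $m_1 \lhd m_2$ and $m_2 \lhd m_3$: two applications of clause~(2) give $\overrightarrow{m_1} = \overrightarrow{m_3}$ with $|m_3| < |m_1|$; two applications of clause~(1) chain the ``proper initial segment'' relation; and the two mixed cases both deliver a clause-(1) witness for $m_1 \lhd m_3$. The $\dag$-cases are trivial, since no moment precedes $\dag$.

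\emph{Historical connection.} Since by definition $\dag \lhd m$ for every standard $m$ and $\dag \unlhd \dag$, we may always take $m_2 = \dag$.

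\emph{No backward branching.} Let $m_1, m_2 \unlhd m$. If any of the three is $\dag$, the statement is immediate from $\dag \unlhd m'$ for every $m'$. So assume all three are standard and in fact $m_1 \lhd m$, $m_2 \lhd m$ (the reflexive subcases are trivial). I split on which clause witnesses each of these. If both are witnessed by clause~(2), then $\overrightarrow{m_1} = \overrightarrow{m} = \overrightarrow{m_2}$ and the heights $|m_1|, |m_2|$ both exceed $|m|$, so they are linearly ordered in $\omega$, and one of $m_1 \unlhd m_2$, $m_2 \unlhd m_1$ holds by clause~(2). If $m_1 \lhd m$ is clause~(1), witnessed by $\xi \in \overrightarrow{m_1}$, and $m_2 \lhd m$ is clause~(2), so $\overrightarrow{m_2} = \overrightarrow{m}$, then the very same $\xi$ is a proper initial segment of every element of $\overrightarrow{m_2}$, i.e.\ $m_1 \lhd m_2$; the symmetric case is identical.

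The only delicate subcase is when both $m_1 \lhd m$ and $m_2 \lhd m$ are witnessed by clause~(1), say by $\xi_1 \in \overrightarrow{m_1}$ and $\xi_2 \in \overrightarrow{m_2}$. Fix any $\tau \in \overrightarrow{m}$: then $\xi_1$ and $\xi_2$ are both proper initial segments of the same finite sequence $\tau$, so they are comparable as sequences. Here I use the structural fact that any two elements of a single $\equiv$-class agree on all but their last entry, so a proper initial segment of one element of a class is a proper initial segment of every element of that class. If $\xi_1$ is a proper initial segment of $\xi_2$, this fact applied to $\overrightarrow{m_2}$ gives $m_1 \lhd m_2$ via clause~(1); symmetrically if $\xi_2$ is a proper initial segment of $\xi_1$. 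If $\xi_1 = \xi_2$, then $\overrightarrow{m_1} = \overrightarrow{m_2}$, and comparing heights (including the possibility $|m_1| = |m_2|$, hence $m_1 = m_2$) yields $m_1 \unlhd m_2$ or $m_2 \unlhd m_1$ via clause~(2).

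I expect the main obstacle to be the bookkeeping in this last clause-(1)/clause-(1) subcase of no backward branching, where one must translate ``proper initial segment of a single element of $\overrightarrow{m_i}$'' into ``proper initial segment of every element of $\overrightarrow{m_i}$'' via the shape of the equivalence relation $\equiv$, and combine this with the elementary fact that two initial segments of the same finite sequence are linearly comparable.
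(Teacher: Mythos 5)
Your proof is correct. Note that the paper itself does not prove this lemma at all: it is one of the results deferred wholesale to the canonical-model construction for JA-STIT (Lemma 4 of \cite{OLexpl-gen}), so there is no in-paper argument to compare against. Your direct case analysis on the two clauses of the definition of $\lhd$ is exactly the verification that is being outsourced, and every case checks out against the definitions given here; in particular you correctly isolate the one non-trivial ingredient, namely that elements of a single $\equiv$-class agree on all but their last entry, so ``proper initial segment of some $\tau$ in the core'' and ``proper initial segment of every $\tau$ in the core'' coincide, which is what makes both the transitivity chaining and the clause-(1)/clause-(1) subcase of no backward branching go through.
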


Before we move on to the other components of the canonical model
$\mathcal{M}$ to be defined in this section, we formulate some
important facts about the structure of
$Hist(\mathcal{M}^{Ag}_{\mathcal{CS}})$ as induced by the
above-defined $Tree$ and $\unlhd$. We start by defining basic
sequences of elements. A \emph{basic sequence} of elements is a
set of elements of the form $\{ \xi_1,\ldots,\xi_n,\ldots, \}$
such that for every $n \geq 1$:
\begin{itemize}
    \item $\xi_n$ is of length $n$;
    \item $\xi_n$ is the greatest proper initial segment of
    $\xi_{n + 1}$.
\end{itemize}
Basic sequences will be denoted by capital Latin letters $S$, $T$,
and $U$ with subscripts and superscripts when needed. Every given
basic sequence $S$ induces the following $[S] \subseteq Tree$:
$$
[S] = \{\dag\} \cup \bigcup_{n \in \omega}\{ ([\xi_n]_\equiv, k)
\mid k \in \omega \}.
$$
It is immediate that every basic sequence $S$ induces a unique
$[S] \subseteq Tree$ in this way. It is, perhaps, less immediate
that the mapping $S \mapsto [S]$ is injective:
\begin{lemma}\label{injective}
Let $S$, $T$ be basic sequences of elements. Then:
$$
[S] = [T] \Leftrightarrow S = T.
$$
\end{lemma}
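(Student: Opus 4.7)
The right-to-left direction is immediate from the definition of $[S]$, so the whole content lies in the implication $[S]=[T]\Rightarrow S=T$. My plan is to extract equalities component-by-component, using the key fact that $\equiv$ pins down all but the last coordinate of an element.

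Suppose $S=\{\xi_1,\xi_2,\ldots\}$ and $T=\{\tau_1,\tau_2,\ldots\}$ are basic sequences with $[S]=[T]$. First I would separate $[S]$ and $[T]$ by length: the standard moments in $[S]$ of length exactly $n$ form the set $\{([\xi_n]_\equiv,k)\mid k\in\omega\}$, and analogously for $[T]$ with $\tau_n$ in place of $\xi_n$. (Here I use that $\xi_n$ has length $n$ and that length is a well-defined invariant of $\equiv$-classes, as noted right after the definition of $\equiv$.) Equating these two descriptions of the length-$n$ standard moments of $[S]=[T]$ and projecting to the first coordinate gives $[\xi_n]_\equiv=[\tau_n]_\equiv$, i.e.\ $\xi_n\equiv\tau_n$, for every $n\geq 1$.

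The potentially tricky point is that $\xi_n\equiv\tau_n$ by itself does \emph{not} force $\xi_n=\tau_n$: the definition of $\equiv$ only constrains the final coordinate of the two elements up to $\Box$-closure. To upgrade equivalence to equality I would move one step up in the basic sequence. Write $\xi_{n+1}=(\Gamma_1,\ldots,\Gamma_n,\Gamma_{n+1})$ and $\tau_{n+1}=(\Delta_1,\ldots,\Delta_n,\Delta_{n+1})$. From the already-established $\xi_{n+1}\equiv\tau_{n+1}$, the definition of $\equiv$ yields $\Gamma_i=\Delta_i$ for all $i\leq n$. Since $\xi_n$ is by definition the greatest proper initial segment of $\xi_{n+1}$, we have $\xi_n=(\Gamma_1,\ldots,\Gamma_n)$, and similarly $\tau_n=(\Delta_1,\ldots,\Delta_n)$, so $\xi_n=\tau_n$.

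Running this for every $n\geq 1$ gives $S=T$ and finishes the proof. The only real subtlety is the two-step trick in the previous paragraph (comparing $\xi_{n+1}$ with $\tau_{n+1}$ to obtain equality at level $n$); everything else is bookkeeping about lengths and cores of the standard moments in $[S]$ and $[T]$.
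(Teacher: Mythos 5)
Your proof is correct: the reduction to $\xi_n\equiv\tau_n$ via the length-$n$ standard moments, followed by the step up to $\xi_{n+1}\equiv\tau_{n+1}$ to recover the full equality $\xi_n=\tau_n$ from the first $n$ coordinates fixed by $\equiv$, is exactly the right argument. The paper itself omits the proof of Lemma \ref{injective}, deferring to Lemma 5 of \cite{OLexpl-gen}, so your write-up is in fact more self-contained than the text; I see no gap.
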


Another striking fact is that basic sequences can be used to
characterize $Hist(\mathcal{M}^{Ag}_{\mathcal{CS}})$ through this
injection:
\begin{lemma}\label{structure}
The following statements hold:

\begin{enumerate}
\item If $S = \{ \xi_1,\ldots,\xi_n,\ldots, \}$ is a basic
sequence, then $[S] \in Hist(\mathcal{M}^{Ag}_{\mathcal{CS}})$,
and the following presentation gives $[S]$ in the
$\unlhd$-ascending order:
$$
\dag,\ldots,([\xi_1]_\equiv, k),\ldots, ([\xi_1]_\equiv,
0),\ldots,([\xi_n]_\equiv, k),\ldots, ([\xi_n]_\equiv, 0),\ldots,
$$

\item $Hist(\mathcal{M}^{Ag}_{\mathcal{CS}}) = \{ [S] \mid
S\textup{ is a basic sequence}\}$.
\end{enumerate}
\end{lemma}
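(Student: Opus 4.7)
The plan is to treat the two parts of Lemma \ref{structure} separately. For Part 1, I verify that $[S]$ is linearly ordered by $\unlhd$, that the displayed enumeration is its $\unlhd$-ascending order, and that $[S]$ is maximal. Linearity splits into cases. For two standard moments sharing a core $[\xi_n]_\equiv$, clause (2) in the definition of $\lhd$ orders them by descending height. For moments with cores $[\xi_n]_\equiv$ and $[\xi_{n'}]_\equiv$ with $n < n'$, the basic-sequence condition gives that $\xi_n$ is a proper initial segment of $\xi_{n'}$; since any $\tau \in [\xi_{n'}]_\equiv$ shares its first $n'-1 \geq n$ coordinates with $\xi_{n'}$, $\xi_n$ is a proper initial segment of every such $\tau$, so clause (1) delivers $([\xi_n]_\equiv, k) \lhd ([\xi_{n'}]_\equiv, k')$. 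For maximality, I take $m \in Tree \setminus [S]$, necessarily standard of some length $n \geq 1$, and compare it with $([\xi_n]_\equiv, 0) \in [S]$: since $\overrightarrow{m} \neq [\xi_n]_\equiv$ clause (2) fails, and since both cores have length $n$ no element of one can be a proper initial segment of an element of the other, so clause (1) fails too.

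For Part 2, given a maximal chain $h$ I construct a basic sequence $S$ with $h = [S]$. First, $\dag \in h$ by maximality. Next, a maximality-plus-case-analysis check shows that if $(C,k) \in h$ then $(C,k') \in h$ for every $k' \in \omega$, so the standard moments in $h$ are partitioned by their cores, one core per length that appears. A downward argument then shows that if $h$ contains a length-$n$ moment with representative $\xi = (\xi_1,\dots,\xi_n)$ and $n \geq 2$, the moment $([(\xi_1,\dots,\xi_{n-1})]_\equiv, 0)$ is comparable with every element of $h$ and hence belongs to $h$; the set $L(h)$ of lengths represented in $h$ is therefore an initial segment of $\omega \setminus \{0\}$, and the unique length-$n$ core $C_n$ equals $[(\eta_1,\dots,\eta_n)]_\equiv$ for any $\eta$ representing any core of greater length. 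A dual upward argument, using Lemma \ref{elementcontinuation} to extend a representative of the topmost core, rules out finite $L(h)$: if $L(h) = \{1,\dots,N\}$, extending $\xi \in C_N$ to $\xi'$ of length $N+1$ yields $([\xi']_\equiv, 0)$ which, by the alignment just established, is comparable with every existing moment of $h$, contradicting maximality. With every length present, I pick for each $n$ any $\eta \in C_{n+1}$ and set $\xi_n = (\eta_1,\dots,\eta_n)$; the alignment makes this independent of the choice of $\eta$ and ensures neighboring $\xi_n$ sit as initial segments of each other, so $S = \{\xi_1, \xi_2, \dots\}$ is a basic sequence with $[S] = h$.

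The main obstacle I anticipate is the upward-extension step ruling out finite $L(h)$: the new moment $([\xi']_\equiv, 0)$ must be $\unlhd$-comparable with every $(C_n, k) \in h$ for $n \leq N$, and this is only visible after the downward analysis has established that each such $C_n$ is the $\equiv$-class of the length-$n$ initial segment of $\xi$. Sequencing the downward and upward arguments so that the needed alignment is in hand when invoked is the main piece of bookkeeping; the remaining case analyses are routine applications of the two clauses defining $\lhd$.
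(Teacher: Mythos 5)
The paper does not actually prove this lemma: it is one of the results in subsection \ref{borrowed} whose proof is omitted and delegated, verbatim, to Lemma 9 of \cite{OLexpl-gen}, so there is no in-paper argument to compare against line by line. Judged directly against the definitions given here, your reconstruction is correct and is the natural (essentially forced) argument. The key observations you rely on all check out: within one core, clause (2) of the definition of $\lhd$ linearly orders the copies by descending height; across cores of different lengths, every $\tau\equiv\xi_{n'}$ agrees with $\xi_{n'}$ on its first $n'-1$ coordinates, so the length-$n$ prefix of one representative is a prefix of \emph{all} representatives and clause (1) applies; two distinct cores of equal length are never $\lhd$-comparable, which gives both the maximality in Part 1 and the ``one core per length'' fact in Part 2; and the $\equiv$-invariance of the first $n-1$ coordinates makes your prefix-alignment well defined, so the downward and upward closure arguments and the final extraction of the basic sequence $S$ with $[S]=h$ go through, with Lemma \ref{elementcontinuation} supplying the needed extension. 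Two small points you should state explicitly in a written-out version: (i) a proper initial segment of an element is again an element (immediate, since the defining conditions are quantified over $k<n$, but it is used when you form $\xi_n=(\eta_1,\dots,\eta_n)$); and (ii) the degenerate case $h=\{\dag\}$, i.e.\ $L(h)=\emptyset$, which your ``$L(h)=\{1,\dots,N\}$'' phrasing does not literally cover but which is dismissed at once since any standard moment is $\unlhd$-comparable with $\dag$. Neither is a genuine gap.
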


It follows from Lemmas \ref{injective} and \ref{structure} that
not only every basic sequence generates a unique $h \in
Hist(\mathcal{M}^{Ag}_{\mathcal{CS}})$, but also for every $h \in
Hist(\mathcal{M}^{Ag}_{\mathcal{CS}})$ there exists a unique basic
sequence $S$ such that $h = [S]$. We will denote this unique $S$
for a given $h$ by $]h[$. It is immediate from Lemmas
\ref{injective} and \ref{structure} that for every $h \in
Hist(\mathcal{M}^{Ag}_{\mathcal{CS}})$, $h = [(]h[)]$. Likewise,
for every basic sequence $S$, we have $S = ]([S])[$. As a further
useful piece of notation, we introduce the notion of
\emph{intersection} of a standard moment $m$ with a history $h \in
H_m$. Assume that $m$ is of the length $n$ and that $]h[  = \{
\xi_1,\ldots,\xi_n,\ldots, \}$. Then $m$ must be of the form
$([\xi_n]_\equiv, k)$ for some $k\in\omega$, and we will also have
$\overrightarrow{m} \cap ]h[ = \{ \xi_n \}$. We now define the
only member of the latter singleton as the result $m \sqcap h$ of
the intersection of $m$ and $h$, setting $m \sqcap h = \xi_n$. It
can be shown that for any element $\xi$ in the core of a given
standard moment $m$ there exists an $h \in H_m$ such that $\xi = m
\sqcap h$:
\begin{lemma}\label{sqcap}
Let $(\Gamma_1,\ldots,\Gamma_k)$ be an element. Then, for every $n
\in \omega$ there is at least one history $h \in
H_{([(\Gamma_1,\ldots,\Gamma_k)]_\equiv, n)}$ such that
$([(\Gamma_1,\ldots,\Gamma_k)]_\equiv, n) \sqcap h =
(\Gamma_1,\ldots,\Gamma_k)$.
\end{lemma}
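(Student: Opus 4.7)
The plan is to construct a history passing through the moment $m = ([(\Gamma_1,\ldots,\Gamma_k)]_\equiv, n)$ whose intersection with $m$ is exactly $(\Gamma_1,\ldots,\Gamma_k)$, and to do so by assembling a suitable basic sequence of elements and invoking Lemma \ref{structure}.

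First I would observe that initial segments of an element are themselves elements: all three clauses in Definition \ref{element} are inherited by truncations. Hence, for every $i \leq k$, the sequence $\xi_i := (\Gamma_1,\ldots,\Gamma_i)$ is an element, and $\xi_{i}$ is the greatest proper initial segment of $\xi_{i+1}$ by construction. To obtain the tail of the would-be basic sequence, I would apply Lemma \ref{elementcontinuation} inductively: given $\xi_i$ for $i \geq k$, the lemma furnishes a $\mathcal{CS}$-maxiconsistent $\Gamma_{i+1}$ such that $\xi_{i+1} := (\Gamma_1,\ldots,\Gamma_{i+1})$ is an element extending $\xi_i$ in the required way. This yields a countable family $S = \{\xi_1, \xi_2, \ldots\}$ satisfying both defining properties of a basic sequence (each $\xi_i$ has length $i$, and $\xi_i$ is the greatest proper initial segment of $\xi_{i+1}$).

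Next I would apply Lemma \ref{structure}.1 to conclude that $[S] \in Hist(\mathcal{M}^{Ag}_{\mathcal{CS}})$, with $[S]$ containing, for each $i \in \omega$ and each $k' \in \omega$, the standard moment $([\xi_i]_\equiv, k')$. Taking $i = k$ and $k' = n$, and noting that $[\xi_k]_\equiv = [(\Gamma_1,\ldots,\Gamma_k)]_\equiv$, we get $m \in [S]$, so that $h := [S] \in H_m$.

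It remains to verify that $m \sqcap h = (\Gamma_1,\ldots,\Gamma_k)$. Since $h = [S]$, Lemma \ref{injective} (or rather its immediate consequence noted right after it) gives $]h[ = S$. By the definition of intersection, $m \sqcap h$ is the unique member of $\overrightarrow{m} \cap ]h[ = [(\Gamma_1,\ldots,\Gamma_k)]_\equiv \cap S$. The element $\xi_k = (\Gamma_1,\ldots,\Gamma_k)$ itself lies in this set, and uniqueness is automatic because all elements in a given $\equiv$-class share the same length (namely $k$ here) while distinct members of the basic sequence $S$ have distinct lengths. This pins down $m \sqcap h = (\Gamma_1,\ldots,\Gamma_k)$, completing the proof. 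There is no real obstacle here; the only point requiring attention is the bookkeeping showing that the constructed $S$ genuinely is a basic sequence, which follows directly from Lemma \ref{elementcontinuation}.
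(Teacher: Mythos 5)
Your proof is correct and follows what is clearly the intended route: the paper itself omits the argument (deferring to Lemma 10 of \cite{OLexpl-gen}), but the surrounding infrastructure --- Lemma \ref{elementcontinuation} for extending elements and Lemma \ref{structure} for realizing basic sequences as histories --- is set up for exactly the construction you give. Your closing observation that uniqueness of the intersection follows from the length bookkeeping (all members of $\overrightarrow{m}$ have length $k$, while distinct members of $S$ have distinct lengths) is the right way to pin down $m \sqcap h$.
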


An immediate but important corollary of Lemma \ref{sqcap} is that
the core of a given moment $m$ is exactly the set of $m$'s
intersections with the histories passing through $m$:
\begin{corollary}\label{c-sqcap}
Let $m \in Tree$. Then $\{ \xi \mid \xi \in \overrightarrow{m} \}
= \{ m \sqcap h \mid h \in H_m \}$.
\end{corollary}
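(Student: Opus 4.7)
The plan is to prove the two inclusions separately, using Lemma \ref{sqcap} for one direction and the definition of $\sqcap$ for the other. We may clearly restrict attention to standard $m$, since $\sqcap$ has only been defined in that case.

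For the inclusion $\{\xi\mid\xi\in\overrightarrow{m}\}\subseteq\{m\sqcap h\mid h\in H_m\}$, suppose $\xi\in\overrightarrow{m}$. Then $\overrightarrow{m}=[\xi]_\equiv$, so $m=([\xi]_\equiv,|m|)$. Applying Lemma \ref{sqcap} to the element $\xi$ and the natural number $|m|$, we obtain a history $h\in H_{([\xi]_\equiv,|m|)}=H_m$ with $m\sqcap h=\xi$, as required.

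For the reverse inclusion $\{m\sqcap h\mid h\in H_m\}\subseteq\{\xi\mid\xi\in\overrightarrow{m}\}$, let $h\in H_m$ be arbitrary and let $n$ denote the length of $m$. By Lemma \ref{structure}, $]h[$ is a basic sequence $\{\xi_1,\ldots,\xi_k,\ldots\}$ and, tracing how $m$ sits in $[\,]h[\,]=h$, we see that $m=([\xi_n]_\equiv,k)$ for some $k\in\omega$, so $\xi_n\in\overrightarrow{m}$. By the very definition of $\sqcap$, we then have $m\sqcap h=\xi_n\in\overrightarrow{m}$.

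The only subtlety to watch is that the definition of $m\sqcap h$ pins down \emph{one} representative of the equivalence class $\overrightarrow{m}$ from the basic sequence $]h[$, and conversely Lemma \ref{sqcap} is precisely what lets us realize an arbitrary representative of $\overrightarrow{m}$ as such a pick along some history; there is no hard calculation beyond invoking these two previously established facts.
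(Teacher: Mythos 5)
Your proof is correct and is exactly the argument the paper has in mind: the paper states this corollary without proof as ``immediate'' from Lemma \ref{sqcap}, and your two inclusions (Lemma \ref{sqcap} for realizing an arbitrary $\xi\in\overrightarrow{m}$ as some $m\sqcap h$, and the definition of $\sqcap$ for the converse) are precisely the intended justification. The restriction to standard $m$ is a sensible observation, since $\sqcap$ and the set-theoretic reading of $\overrightarrow{m}$ are only meaningful there.
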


We offer some general remarks on what we have shown thus far.
Lemma \ref{structure} shows that every history in the canonical
model has a uniform order structure, namely, it consists of $\dag$
followed by $\omega$ copies of the set of negative integers.
Another general observation is that histories in
$\mathcal{M}^{Ag}_{\mathcal{CS}}$ can only branch off at moments
of height $0$, so that at moments of other heights all the
histories remain undivided. This last fact does not follow from
the lemmas proved thus far but it can be proved in the same way as
we have proved the similar fact in \cite[Corollary 3]{OLexpl-gen}.

We now define the choice function for our canonical model:
\begin{itemize}
    \item $Choice^m_j(h) = \{ h' \in H_m \mid
    (\forall A \in Form)([j]A \in end(h \sqcap m) \Rightarrow A \in end(h' \sqcap
    m))\}$, if $m \neq \dag$ and $|m| = 0$;
    \item $Choice^m_j = H_m$, otherwise.
\end{itemize}
Since for every $j \in Ag$, $[j]$ is an S5-modality, $Choice$
induces a partition on $H_m$ for every given $m \in Tree$. We
claim that the choice function verifies the relevant semantic
constraints:
\begin{lemma}\label{choice}
The tuple $\langle Tree, \unlhd, Choice\rangle$, as defined above,
verifies both independence of agents and no choice between
undivided histories constraints.
\end{lemma}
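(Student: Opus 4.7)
My plan is to split the argument by the definition of $Choice$, which yields the trivial partition $\{H_m\}$ whenever $m = \dag$ or $|m| \neq 0$, and a nontrivial $\equiv$-based partition only when $m \neq \dag$ and $|m| = 0$. In the trivial cases, both constraints are immediate: no choice between undivided histories holds since $Choice^m_j(h) = H_m = Choice^m_j(h')$ for all $h, h' \in H_m$, while independence of agents reduces to $H_m \neq \emptyset$, which follows from historical connection via Lemma \ref{leq}.

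For the no-choice-between-undivided-histories constraint in the nontrivial case, I will show that whenever $m \neq \dag$, $|m| = 0$, and $h \approx_m h'$, we in fact have $h \sqcap m = h' \sqcap m$, making the defining condition for $Choice^m_j$ trivially symmetric. Let $m' \rhd m$ witness the undividedness. Since $|m| = 0$, clause (2) in the definition of $\lhd$ is unavailable, so clause (1) applies and furnishes some $\xi \in \overrightarrow{m}$ that is a proper initial segment of every element of $\overrightarrow{m'}$. The basic-sequence structure of $]h[$ and $]h'[$ coming from Lemma \ref{structure} then forces the length-$n$ entry of each (where $n$ is the length of $m$) to coincide with $\xi$, so $h \sqcap m = \xi = h' \sqcap m$.

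The substantive work lies in independence of agents for the nontrivial case. Enumerate $Ag = \{j_1,\ldots,j_n\}$ and, for each $j \in Ag$, fix $h_j \in H_m$ with $f(j) = Choice^m_j(h_j)$ and set $\xi_j := h_j \sqcap m$. By Corollary \ref{c-sqcap} all $\xi_j$ lie in $\overrightarrow{m}$, hence share a common prefix $(\Gamma_1,\ldots,\Gamma_{k-1})$. I will assemble a common witness by forming
\begin{align*}
\Sigma := \bigcup_{j \in Ag}\{A \mid [j]A \in end(\xi_j)\} \cup \{\Box B \mid \Box B \in end(\xi_{j_1})\}
\end{align*}
and verifying $\mathcal{CS}$-consistency. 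If $\Sigma$ were inconsistent, I would pick a finite offending subset, form finite conjunctions $A_j^*$ per agent with $[j]A_j^* \in end(\xi_j)$, pass to $\Diamond[j]A_j^* \in end(\xi_j)$ by \eqref{A1}, and then transport each $\Diamond[j]A_j^*$ into $end(\xi_{j_1})$ via Lemma \ref{element-new}.2, since $\vdash_{\mathcal{CS}} \Diamond[j]A_j^* \to \Box\Diamond[j]A_j^*$ by S5 for $\Box$. One application of \eqref{A3} then yields $\Diamond\bigwedge_{i=1}^{n}[j_i]A_{j_i}^* \in end(\xi_{j_1})$, and the standard S5 derivation of $\Diamond C \to \neg\Box B^*$ from $\vdash C \to \neg B^*$ contradicts the assumed inconsistency. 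With $\Sigma$ consistent, I extend it to a $\mathcal{CS}$-maxiconsistent $\Delta$; Lemma \ref{element} then certifies that $(\Gamma_1,\ldots,\Gamma_{k-1},\Delta)$ is an element $\equiv$-equivalent to $\xi_{j_1}$, and Lemma \ref{sqcap} produces an $h^* \in H_m$ with $h^* \sqcap m = (\Gamma_1,\ldots,\Gamma_{k-1},\Delta)$. Since $\{A \mid [j]A \in end(\xi_j)\} \subseteq \Delta$ for every $j$, this $h^*$ lies in $f(j)$ for every $j$, so $\bigcap_{j \in Ag}f(j) \neq \emptyset$.

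The main obstacle is the consistency check for $\Sigma$: combining \eqref{A3} with the stability of $\Diamond$-formulas across $\equiv$-classes is the hinge point where the canonical-model machinery meets the axiomatic content, and the rest is bookkeeping around Lemmas \ref{element} and \ref{sqcap}.
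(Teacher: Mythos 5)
Your proof is correct, and it follows exactly the route the paper intends: the paper omits the argument and defers to Lemma 12 of \cite{OLexpl-gen}, whose proof is the same standard canonical-model argument — trivial partitions off height $0$, coincidence of $m \sqcap h$ for undivided histories via clause (1) of the order definition, and independence of agents via a consistency argument combining \eqref{A3} with the $\Box$-stability of $\Diamond[j]A$ across the $\equiv$-class. The only cosmetic quibble is that $H_m \neq \emptyset$ in the trivial case comes from Lemma \ref{sqcap} (every moment lies on a history) rather than from historical connection, but nothing hinges on this.
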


The next two lemmas can be viewed as `stit versions' of Lemmas
\ref{element} and \ref{element-new}.
\begin{lemma}\label{element-stit}
Let $j \in Ag$, let $(\Gamma_1,\ldots,\Gamma_n, \Gamma)$ be an
element, let $\Delta \subseteq Form^{Ag}$ be
$\mathcal{CS}$-maxiconsistent and let:
$$
\{ [j]A \mid [j]A \in \Gamma \} \subseteq \Delta.
$$
Then $(\Gamma_1,\ldots,\Gamma_n, \Delta)$ is also an element, and
for $m = ((\Gamma_1,\ldots,\Gamma_n, \Gamma)_\equiv, 0)$, whenever
$m \sqcap h = (\Gamma_1,\ldots,\Gamma_n, \Gamma)$, there exists a
$g \in Choice^m_j(h)$ such that:
$$
(\Gamma_1,\ldots,\Gamma_n, \Delta) = m \sqcap g.
$$
\end{lemma}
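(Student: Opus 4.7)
The plan is to split the statement into two tasks: first, verifying that $(\Gamma_1,\ldots,\Gamma_n,\Delta)$ is an element, and then producing a history $g$ with the required properties. Both tasks rest on the same observation: since $[j]$ is an S5 modality (by \eqref{A1}), the T axiom $[j]A \to A$ together with the hypothesis $\{[j]A \mid [j]A \in \Gamma\} \subseteq \Delta$ yields the derived inclusion $\{A \mid [j]A \in \Gamma\} \subseteq \Delta$.

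For the element claim, the first $n$ transitions in $(\Gamma_1,\ldots,\Gamma_n,\Delta)$ are inherited from the original element, so only the transition from $\Gamma_n$ to $\Delta$ needs verification. Suppose $KA \in \Gamma_n$. By the element property of $(\Gamma_1,\ldots,\Gamma_n,\Gamma)$ we have $KA \in \Gamma$, and then \eqref{T3} followed by \eqref{A2} delivers $[j]KA \in \Gamma$; the hypothesis on $\Delta$ yields $[j]KA \in \Delta$, and one further application of T gives $KA \in \Delta$. The case $Prove(j',t,A) \in \Gamma_n$ is entirely parallel: it implies $Proven(t,A) \in \Gamma$, and \eqref{T4} and \eqref{A2} push $Proven(t,A)$ from $\Gamma$ to $\Delta$ via its $[j]$-box.

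For the second task, I would first observe that \eqref{A2} also yields the inclusion $\{\Box A \mid \Box A \in \Gamma\} \subseteq \{B \mid [j]B \in \Gamma\}$, so the displayed implication above shows $\{\Box A \mid \Box A \in \Gamma\} \subseteq \Delta$ in the sense that $\Box A \in \Gamma \Rightarrow A \in \Delta$. Hence $(\Gamma_1,\ldots,\Gamma_n,\Gamma) \equiv (\Gamma_1,\ldots,\Gamma_n,\Delta)$, so both tuples belong to $\overrightarrow{m}$. Applying Lemma \ref{sqcap} to $(\Gamma_1,\ldots,\Gamma_n,\Delta)$ at height $0$, I obtain a history $g \in H_m$ with $m \sqcap g = (\Gamma_1,\ldots,\Gamma_n,\Delta)$. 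Since $|m| = 0$ and $m \sqcap h = \Gamma$, the definition of $Choice^m_j(h)$ reduces exactly to the condition $[j]A \in \Gamma \Rightarrow A \in \Delta$, which has already been secured; therefore $g \in Choice^m_j(h)$.

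I do not anticipate any serious obstacle: the entire argument is driven by the S5 character of $[j]$ and the three theorems \eqref{T3}, \eqref{T4}, \eqref{A2} that let us convert $K$- and $Proven$-commitments into $[j]$-commitments. The only step requiring a bit of care is remembering that the closure condition for elements must be verified separately for $K$-formulas and for $Prove$-formulas, rather than inheriting it wholesale from $(\Gamma_1,\ldots,\Gamma_n,\Gamma)$.
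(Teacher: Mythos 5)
Your proposal is correct and follows essentially the same route as the paper: both arguments hinge on converting the relevant commitments of $\Gamma$ into $[j]$-boxed formulas via \eqref{A2} (together with the S5 laws for $\Box$ and $[j]$) so that the hypothesis transports them into $\Delta$, and then conclude with Lemma \ref{sqcap} and the definition of $Choice^m_j(h)$. The only organizational difference is that the paper first establishes $\{\Box A \mid \Box A \in \Gamma\} \subseteq \Delta$ and then cites Lemma \ref{element} to get both the element property and the $\equiv$-equivalence in one stroke, whereas you re-verify the element closure conditions directly through $[j]$-boxes.
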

\begin{proof}
First of all, note that whenever $\Box A \in \Gamma$ we have, in
virtue of $\mathcal{CS}$-maxiconsistency of $\Gamma$, that
$\Box\Box A \in \Gamma$ (by \eqref{A1}) and, further, that
$[j]\Box A \in \Gamma$ (by \eqref{A2}). Therefore, we must have
$[j]\Box A \in \Delta$ and in view of
$\mathcal{CS}$-maxiconsistency of $\Delta$ and \eqref{A1} we will
also have $\Box A \in \Delta$. Thus we have established that $\{
\Box A \mid \Box A \in \Gamma \} \subseteq \Delta$ and it follows,
by Lemma \ref{element}, that $(\Gamma_1,\ldots,\Gamma_n, \Delta)$
is an element and that $(\Gamma_1,\ldots,\Gamma_n, \Delta)\in
\overrightarrow{m}$. Now, if $h$ is chosen as in the lemma, use
Lemma \ref{sqcap} to pick a $g \in H_m$ such that
$(\Gamma_1,\ldots,\Gamma_n, \Delta) = m \sqcap g$ holds. Recall
that we have $|m| = 0$. By the construction of $\Delta$ and
\eqref{A1} we must then have $g \in Choice^m_j(h)$.
\end{proof}
Before moving on with the lemmas, we introduce two further
notations, which are similar to the notations we used for $Act$,
but refer to the inner structures of
$\mathcal{M}^{Ag}_{\mathcal{CS}}$:
$$
end_m = \bigcap_{h \in H_m}end(m \sqcap h);\qquad\quad
end_{(m,h,j)} = \bigcap_{g \in Choice^m_j(h)}end(m \sqcap g).
$$
\begin{lemma}\label{element-new-stit}
Let $m \in Tree$ be such that $|m| = 0$, and let $A \in
Form^{Ag}$. Then:
$$
A \in end_{(m,h,j)} \Leftrightarrow [j] A \in end(m \sqcap h)
\Leftrightarrow [j]A \in end_{(m,h,j)}.
$$
\end{lemma}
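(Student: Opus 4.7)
The plan is to establish the three key implications that chain together the two equivalences, following closely the pattern of Lemma \ref{element-new} but in the stit setting rather than the historical-necessity setting.

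First I would dispatch the easy directions. For $[j]A \in end(m \sqcap h) \Rightarrow [j]A \in end_{(m,h,j)}$, I would apply the S5 axiom $[j]A \to [j][j]A$ together with $\mathcal{CS}$-maxiconsistency of $end(m \sqcap h)$ to conclude $[j][j]A \in end(m \sqcap h)$; then, by the definition of $Choice^m_j(h)$ (applicable because $|m| = 0$), for every $g \in Choice^m_j(h)$ we obtain $[j]A \in end(m \sqcap g)$, giving $[j]A \in end_{(m,h,j)}$. The reverse inclusion follows at once since $h \in Choice^m_j(h)$. For $[j]A \in end_{(m,h,j)} \Rightarrow A \in end_{(m,h,j)}$, I would simply apply the T-axiom for $[j]$ at each $g \in Choice^m_j(h)$.

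The substantive step is $A \in end_{(m,h,j)} \Rightarrow [j]A \in end(m \sqcap h)$, which I would prove by contraposition. Writing $m \sqcap h = (\Gamma_1,\ldots,\Gamma_n, \Gamma)$ and assuming $[j]A \notin \Gamma$, I would consider
$$
\Sigma := \{[j]B \mid [j]B \in \Gamma\} \cup \{\neg A\}.
$$
If $\Sigma$ were $\mathcal{CS}$-inconsistent, then $\vdash_{\mathcal{CS}} ([j]B_1 \wedge \ldots \wedge [j]B_k) \to A$ for some $[j]B_1, \ldots, [j]B_k \in \Gamma$, whence by S5-reasoning for $[j]$ we would have $\vdash_{\mathcal{CS}} ([j]B_1 \wedge \ldots \wedge [j]B_k) \to [j]A$, and hence $[j]A \in \Gamma$ by $\mathcal{CS}$-maxiconsistency, contradicting our assumption. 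So $\Sigma$ is $\mathcal{CS}$-consistent and extends to a $\mathcal{CS}$-maxiconsistent $\Delta$. Since $\{[j]B \mid [j]B \in \Gamma\} \subseteq \Delta$, Lemma \ref{element-stit} applies, yielding a $g \in Choice^m_j(h)$ with $m \sqcap g = (\Gamma_1,\ldots,\Gamma_n,\Delta)$. But $\neg A \in \Delta$ forces $A \notin end(m \sqcap g)$, contradicting $A \in end_{(m,h,j)}$.

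The only real obstacle is this Lindenbaum-style step, and it is handled exactly as in Part 1 of Lemma \ref{element-new}, with $\Box$ replaced throughout by $[j]$ and the reference to Lemma \ref{element} replaced by its stit analogue, Lemma \ref{element-stit}. Everything else is mechanical S5 manipulation together with the definition of $Choice^m_j(h)$ at moments of height $0$.
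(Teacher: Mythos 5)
Your proof is correct and takes essentially the same route as the paper: the identical Lindenbaum-style saturation argument, discharged via Lemma \ref{element-stit}, does all the real work. The only (immaterial) difference is that the paper runs that argument at an arbitrary $g \in Choice^m_j(h)$ to obtain $[j]A \in end_{(m,h,j)}$ directly, whereas you run it only at $h$ itself and then propagate to the whole choice cell via $[j]A \to [j][j]A$ and the definition of $Choice^m_j(h)$.
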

\begin{proof}
If $[j]A \in end_{(m,h,j)}$, then, by $h \in Choice^m_j(h)$, $[j]A
\in end(m \sqcap h)$, whence, by $|m| = 0$ we get that $A \in
end_{(m,h,j)}$. On the other hand, if $A \in end_{(m,h,j)}$, then
choose an arbitrary $g \in Choice^m_j(h)$. If $[j]A \in m \sqcap
g$, then we are done. Otherwise, we can obtain a contradiction as
follows. Consider the set
$$
\Gamma = \{ [j]B \mid [j]B \in end(m \sqcap g) \} \cup \{\neg A\}.
$$
If $\Gamma$ were $\mathcal{CS}$-inconsistent, then we would have:
$$
\vdash_\mathcal{CS}([j]B_1 \wedge\ldots\wedge [j]B_n) \to A,
$$
for some $[j]B_1,\ldots, [j]B_n \in end(m \sqcap g)$, whence by S5
reasoning for $[j]$ we would also have that:
$$
\vdash_\mathcal{CS}([j]B_1 \wedge\ldots\wedge [j]B_n) \to [j]A.
$$
By $\mathcal{CS}$-maxiconsistency of $end(m \sqcap g)$ it would
follow then that $[j]A \in end(m \sqcap g)$, contrary to our
assumption. Therefore, $\Gamma$ is $\mathcal{CS}$-consistent and
we can extend it to a $\mathcal{CS}$-maxiconsistent $\Delta$.
Consider the inner structure of $m \sqcap g$. We must have $m
\sqcap g = (\Gamma_1,\ldots,\Gamma_n, end(m \sqcap g))$ for
appropriate $n \geq 0$ and $\Gamma_1,\ldots,\Gamma_n \subseteq
Form^{Ag}$. But then, by Lemma \ref{element-stit}, we must also
have that $(\Gamma_1,\ldots,\Gamma_n, \Delta)$ is an element, and,
moreover, that $(\Gamma_1,\ldots,\Gamma_n, \Delta) = m \sqcap h'$
for some $h' \in Choice^m_j(g) = Choice^m_j(h)$ (the latter
equality obtains by $g \in Choice^m_j(h)$). But then, note that by
$\Gamma \subseteq \Delta$ we have $\neg A \in \Delta$, whence by
$\mathcal{CS}$-consistency $A \notin \Delta = end(m \sqcap h')$ in
contradiction with our assumption that $A \in end_{(m,h,j)}$. This
contradiction shows that for no $g \in Choice^m_j(h)$ can we have
$[j]A \notin end(\tau)$ so that we must end up having $[j]A \in
end_{(m,h,j)}$.
\end{proof}
We sum up the implications of the above lemmas for our modalities
as follows:
\begin{corollary}\label{p-mod}
Let $m \in Tree$, $h \in H_m$, $A \in Form^{Ag}$, $t \in Pol$, and
$j \in Ag$. Then:
\begin{enumerate}
\item $\alpha \in end(m \sqcap h) \Leftrightarrow \alpha \in
end_m$, for all $\alpha \in \{ \Box A, t\co A, KA, Proven(t,A)
\}$;

\item $\alpha \in end(m \sqcap h) \Leftrightarrow \alpha \in
end_{(m,h,j)}$, for all $\alpha \in \{ [j]A, Prove(j,t,A) \}$.
\end{enumerate}
\end{corollary}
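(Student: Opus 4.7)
The plan is to derive the corollary as an immediate combination of Lemmas \ref{element-new} and \ref{element-new-stit}, together with Corollary \ref{c-sqcap}. The key preliminary identification I lean on throughout is that, by Corollary \ref{c-sqcap}, $\{m \sqcap g \mid g \in H_m\}$ coincides with the core $\overrightarrow{m}$, which in turn is precisely the $\equiv$-equivalence class $[m \sqcap h]_\equiv$; consequently $end_m = \bigcap_{\tau \equiv m \sqcap h} end(\tau)$.

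For Part 1, each of the four formula shapes $\alpha$ listed satisfies a theorem of the form $\vdash_{\mathcal{CS}} \alpha \to \Box\alpha$: \eqref{A1} covers $\Box A$, \eqref{T3} covers $KA$, \eqref{T2} covers $t\co A$, and \eqref{T4} covers $Proven(t,A)$. Applying Lemma \ref{element-new}.2 to each such $\alpha$ in turn delivers $\alpha \in end(m \sqcap h) \Leftrightarrow \alpha \in \bigcap_{\tau \equiv m \sqcap h}end(\tau) = end_m$, which is exactly what Part 1 asks for.

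For Part 2 I first treat the principal case $|m|=0$. The clause for $[j]A$ is literally the second equivalence supplied by Lemma \ref{element-new-stit}. For the $Prove(j,t,A)$ clause I use the conjunct $Prove(j,t,A) \to [j]Prove(j,t,A)$ built into axiom \eqref{B9}: the forward direction passes through $[j]Prove(j,t,A) \in end(m \sqcap h)$ (by maxiconsistency) and then invokes Lemma \ref{element-new-stit} with $[j]Prove(j,t,A)$ as the formula in question, obtaining $Prove(j,t,A) \in end_{(m,h,j)}$; the backward direction is immediate because $h \in Choice^m_j(h)$. The case $|m|\neq 0$ is then handled by observing that the choice partition has become trivial, so $end_{(m,h,j)}$ collapses to $end_m$, and transferring the equivalence from the companion moment $m' = (\overrightarrow{m},0)$, which by Lemma \ref{structure} shares both its set of histories with $m$ and satisfies $m \sqcap g = m' \sqcap g$ for every $g \in H_m$.

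The main obstacle is keeping the bookkeeping straight: three distinct intersections are in play ($end_m$, $end_{(m,h,j)}$, and $\bigcap_{\tau \equiv m \sqcap h}end(\tau)$), and each formula shape has to be paired with the right $\alpha \to \Box\alpha$ theorem before the prior lemmas can fire. Once the identifications from Corollary \ref{c-sqcap} and the case split on $|m|$ are in hand, however, each clause of the corollary reduces to essentially a one-line invocation of either Lemma \ref{element-new}.2 or Lemma \ref{element-new-stit}.
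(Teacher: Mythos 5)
Your Part 1 and your treatment of Part 2 at $|m|=0$ coincide with the paper's intended proof: Part 1 is exactly the combination of Corollary \ref{c-sqcap} (which identifies $end_m$ with $\bigcap_{\tau\equiv m\sqcap h}end(\tau)$) and Lemma \ref{element-new}.2 applied to the theorems $\alpha\to\Box\alpha$ supplied by \eqref{A1}, \eqref{T2}, \eqref{T3}, and \eqref{T4}; Part 2 at height $0$ is Lemma \ref{element-new-stit} directly for $[j]A$, and for $Prove(j,t,A)$ it goes through the conjunct $Prove(j,t,A)\to[j]Prove(j,t,A)$ of \eqref{B9} plus maxiconsistency, exactly as the paper's proof indicates.

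The extra case $|m|\neq 0$ in Part 2 is where your argument breaks down, and no argument can save it, because the claim is false there. You correctly observe that for $|m|>0$ the choice partition is trivial, so $end_{(m,h,j)}=end_m=\bigcap_{\tau\equiv m\sqcap h}end(\tau)$; but the proposed transfer from $m'=(\overrightarrow{m},0)$ does not go through: $end_{(m',h,j)}$ is an intersection over the cell $Choice^{m'}_j(h)$, which is in general a \emph{proper} subset of $H_{m'}=H_m$, hence $end_{(m',h,j)}\supseteq end_m$, and membership in the former does not yield membership in the latter. Concretely, by Lemma \ref{element-new}.1, $\alpha\in end_m$ forces $\Box\alpha\in end(m\sqcap h)$; yet $[j]A\wedge\neg\Box[j]A$ is consistent (that is the whole point of the stit layer), and \eqref{B9} even makes $\neg\Box Prove(j,t,A)$ a consequence of $Prove(j,t,A)$, so whenever $Prove(j,t,A)\in end(m\sqcap h)$ one gets $Prove(j,t,A)\notin end_m=end_{(m,h,j)}$ at positive height. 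The statement as printed quantifies over all $m\in Tree$, but the paper's own proof tacitly restricts Part 2 to $|m|=0$ (the hypothesis of Lemma \ref{element-new-stit}), and that is the only case in which Part 2 is ever invoked (in the truth lemma). The right repair is to add the hypothesis $|m|=0$ to Part 2, not to try to prove the unrestricted version.
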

\begin{proof}
Part 1 we get by \eqref{A1}, \eqref{T2}--\eqref{T4}, Lemma
\ref{element-new}.2, and Corollary \ref{c-sqcap}. Part 2 we get by
\eqref{A1}, \eqref{B9}, $\mathcal{CS}$-maxiconsistency of end sets
of elements, and Lemma \ref{element-new-stit}.
\end{proof}

Turning to the justifications-related components, we first define
$R$ as follows:
\begin{itemize}
    \item $R(([(\Gamma_1,\ldots,\Gamma_n, \Gamma)]_\equiv, k), m')\Leftrightarrow$

     $\qquad\qquad\qquad\quad\Leftrightarrow(m' \neq \dag)\&(\forall A \in Form^{Ag})(KA \in \Gamma \Rightarrow KA \in
    end_{m'})$;
    \item $R(\dag,m)$, for all $m \in Tree$.
\end{itemize}

Now, for the definition of $\mathcal{E}$:
\begin{itemize}
    \item For
    all $t \in Pol$: $\mathcal{E}(\dag, t) = \{ A \in Form^{Ag} \mid
\vdash t\co A \}$;

\item For all $t \in Pol$ and $m \neq \dag$: $(\forall A \in
Form^{Ag})(A \in \mathcal{E}(m, t) \Leftrightarrow t\co A \in
end_m)$.

\end{itemize}
We start by mentioning a straightforward corollary to the above
definition:
\begin{lemma}\label{proven}
For all $m \in Tree$ and $t \in Pol$ it is true that $\{ A \in
Form^{Ag} \mid \vdash t\co A \} \subseteq \mathcal{E}(m,t)$.
\end{lemma}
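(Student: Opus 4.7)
The plan is to prove the inclusion by case analysis on whether $m = \dag$ or $m \neq \dag$, using the definition of $\mathcal{E}$ on both branches.

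For the case $m = \dag$, the claim is immediate since by definition $\mathcal{E}(\dag, t) = \{ A \in Form^{Ag} \mid \vdash t\co A \}$, so the inclusion is actually an equality.

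For the case $m \neq \dag$, I would fix an arbitrary $A$ with $\vdash t\co A$ (i.e.\ $\vdash_{\mathcal{CS}} t\co A$) and show $t\co A \in end_m$, which by the definition of $\mathcal{E}$ on standard moments yields $A \in \mathcal{E}(m,t)$. To see that $t\co A \in end_m$, pick any $h \in H_m$ and consider the element $m \sqcap h$; its last component $end(m \sqcap h)$ is by Definition \ref{element} a $\mathcal{CS}$-maxiconsistent subset of $Form^{Ag}$, so any $\mathcal{CS}$-theorem, in particular $t\co A$, must lie in it (otherwise $\neg t\co A$ would belong to $end(m \sqcap h)$, contradicting $\mathcal{CS}$-consistency together with $\vdash_{\mathcal{CS}} t\co A$). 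Since $h \in H_m$ was arbitrary, $t\co A \in \bigcap_{h \in H_m} end(m \sqcap h) = end_m$, as required.

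There is no real obstacle here: the lemma is a direct consequence of the definition of $\mathcal{E}$ and the fact that every $\mathcal{CS}$-theorem belongs to every $\mathcal{CS}$-maxiconsistent set. The only mild point worth noting is that on standard moments the definition of $\mathcal{E}(m,t)$ refers to $end_m$ rather than to individual $end(m \sqcap h)$, which is handled by a trivial intersection argument.
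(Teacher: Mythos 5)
Your proof is correct and follows essentially the same route as the paper's: the $\dag$ case is immediate from the definition of $\mathcal{E}$, and for standard $m$ the claim reduces to the fact that every $\mathcal{CS}$-maxiconsistent set $end(m \sqcap h)$ contains every theorem, so $t\co A \in end_m$. Your version merely spells out the intersection over histories that the paper leaves implicit.
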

\begin{proof}
This holds simply by the definition of $\mathcal{E}$ when $m =
\dag$. If $m \neq \dag$, then, for every $\xi \in
\overrightarrow{m}$, $end(\xi)$ is a maxiconsistent subset of
$Form^{Ag}$ and must contain every provable formula.
\end{proof}

Note that it follows from Lemma \ref{proven}, that the
above-defined function $\mathcal{E}$ satisfies the
$\mathcal{CS}$-normality condition on jstit models. We now mention
the respective adequacy claims:

\begin{lemma}\label{r}
The relation $R$, as defined above, is a preorder on $Tree$, and,
together with $\unlhd$, verifies the future always matters
constraint.
\end{lemma}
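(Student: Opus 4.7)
The plan is to verify each of the three required conditions: reflexivity of $R$, transitivity of $R$, and the inclusion $\unlhd \subseteq R$. Before doing any of this, I would note that the definition of $R$ on standard moments is independent of the choice of representative $(\Gamma_1,\ldots,\Gamma_n,\Gamma)$: by \eqref{T3} together with \eqref{A1} and $\mathcal{CS}$-maxi\-consistency, $KA \in \Gamma$ iff $\Box KA \in \Gamma$, so $K$-membership is constant across an $\equiv$-class (this is also an instance of Lemma \ref{element-new}.2). Consequently, for any standard $m$, $R(m,m')$ can be restated as ``$m' \neq \dag$ and $KA \in end_m \Rightarrow KA \in end_{m'}$ for all $A$,'' where we use Corollary \ref{p-mod}.1 to identify membership in the representative with membership in $end_m$.

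Reflexivity is then immediate. For $\dag$ it holds by the second clause of the definition. For a standard $m$, the condition $KA \in end_m \Rightarrow KA \in end_m$ is trivial, and $m \neq \dag$. Transitivity is also immediate: if $R(\dag,m_2)$ and $R(m_2,m_3)$, then $R(\dag,m_3)$ by definition; if $m_1$ is standard and $R(m_1,m_2)$, $R(m_2,m_3)$ both hold, then $m_2,m_3 \neq \dag$, and chaining $KA \in end_{m_1} \Rightarrow KA \in end_{m_2} \Rightarrow KA \in end_{m_3}$ yields $R(m_1,m_3)$.

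The substantive step is the future always matters constraint. Assume $m_1 \unlhd m_2$. The reflexive case reduces to reflexivity, and the case $m_1 = \dag \lhd m_2$ is given by the second clause of the definition of $R$. So suppose $m_1,m_2$ are both standard with $m_1 \lhd m_2$; then by the definition of $\lhd$ we are in one of two subcases. In subcase (2), $\overrightarrow{m_1} = \overrightarrow{m_2}$, whence $end_{m_1} = end_{m_2}$ by Corollary \ref{c-sqcap}, so $R(m_1,m_2)$ holds trivially. The main obstacle is subcase (1): pick the witness $\xi \in \overrightarrow{m_1}$ which is a proper initial segment of every $\tau \in \overrightarrow{m_2}$. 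Suppose $KA \in end_{m_1}$; by Corollary \ref{p-mod}.1, $KA \in end(\xi)$. Fix any $\tau = (\Gamma_1,\ldots,\Gamma_l) \in \overrightarrow{m_2}$ and let $\xi = (\Gamma_1,\ldots,\Gamma_k)$ with $k < l$. By the third clause of Definition \ref{element} applied repeatedly to the element $\tau$, $KA \in \Gamma_k$ forces $KA \in \Gamma_{k+1}, \ldots, KA \in \Gamma_l = end(\tau)$. Since $\tau$ was arbitrary and $end_{m_2} = \bigcap_{\tau \in \overrightarrow{m_2}} end(\tau)$ by Corollary \ref{c-sqcap}, we conclude $KA \in end_{m_2}$. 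Together with $m_2 \neq \dag$ this gives $R(m_1,m_2)$.
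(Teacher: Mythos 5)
Your proof is correct. Note that the paper itself does not prove Lemma \ref{r}; it is one of the results in subsection \ref{borrowed} whose proofs are imported verbatim from the JA-STIT paper (the table points to Lemma 14 of \cite{OLexpl-gen}), so there is no in-text argument to compare against. Your self-contained verification follows the route one would expect: the preliminary observation that the first clause of the definition of $R$ is independent of the chosen representative (via $KA \leftrightarrow \Box KA$ from \eqref{T3} and \eqref{A1}, i.e.\ Lemma \ref{element-new}.2) is exactly the point that makes the restatement ``$KA \in end_m \Rightarrow KA \in end_{m'}$'' legitimate and the rest of the argument clean; reflexivity and transitivity are then trivial, and the two subcases of $\lhd$ for the future-always-matters constraint are handled correctly, with the proper-initial-segment case resting on the $K$-persistence condition in the definition of elements. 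One trivial slip: that condition is the \emph{second} clause of Definition \ref{element}, not the third (the third is the $Prove$/$Proven$ persistence clause); this does not affect the argument.
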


\begin{lemma}\label{e}
The function $\mathcal{E}$, as defined above, satisfies both
monotonicity of evidence and evidence closure properties.
\end{lemma}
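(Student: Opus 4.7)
The plan is to split both claims on whether the moment involved equals the root $\dag$. When $m = \dag$, the membership $A \in \mathcal{E}(\dag, t)$ unfolds to $\vdash_{\mathcal{CS}} t\co A$, so monotonicity $\mathcal{E}(\dag,t) \subseteq \mathcal{E}(m', t)$ is immediate from Lemma \ref{proven}, and the three closure properties reduce to derived provabilities obtained by applying axioms \eqref{A4}, \eqref{A6}, and \eqref{A5} together with modus ponens. When $m \neq \dag$, I would use the characterization $A \in \mathcal{E}(m,t) \Leftrightarrow t\co A \in end_m$, and then invoke Corollary \ref{p-mod}.1 to pass freely between $t\co A \in end_m$ and $t\co A \in end(m \sqcap h)$ for any fixed $h \in H_m$, thereby reducing every verification to reasoning inside the $\mathcal{CS}$-maxiconsistent set $end(m \sqcap h)$.

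For the closure properties in this non-root case, I fix such an $h$ and combine $\mathcal{CS}$-maxiconsistency of $end(m \sqcap h)$ with the relevant axiom: property (a) follows from \eqref{A4} by two applications of modus ponens, property (b) follows from \eqref{A6} by disjunction introduction, and property (c) follows from \eqref{A5} by taking its first conjunct. Corollary \ref{p-mod}.1 then moves the witness formula back into $end_m$, yielding the required membership in $\mathcal{E}(m,\cdot)$. For monotonicity when $m \neq \dag$, note that the definition of $R$ forces $m' \neq \dag$ as well, so $A \in \mathcal{E}(m', t)$ amounts to $t\co A \in end_{m'}$. Starting from $t\co A \in end_m$, I would use theorem \eqref{T1} and $\mathcal{CS}$-maxiconsistency to promote this to $K t\co A \in end_m$, apply the defining clause of $R$ to propagate $K t\co A$ into $end_{m'}$, and finally bring it back down to $t\co A \in end_{m'}$ via the reflexivity axiom $KB \to B$ from \eqref{A7}.

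The only subtle point is this last bridge between the notion of evidence, which is intrinsically moment-determinate, and the $K$-formulation of the accessibility relation $R$; once theorem \eqref{T1} is invoked, everything collapses to routine manipulation of maxiconsistent end-sets via Corollary \ref{p-mod}.1. The evidence-closure cases themselves are essentially direct translations of the justification-logic axioms into the canonical model, so I expect no genuine obstacle there.
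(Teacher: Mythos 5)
Your proposal is correct, and it is the standard verification one would expect: note that the paper does not actually print a proof of Lemma \ref{e} but defers it to Lemma 15 of \cite{OLexpl-gen}, whose argument (given that $R$ and $\mathcal{E}$ are defined verbatim as there) proceeds by exactly the same case split on $\dag$ versus standard moments, using Lemma \ref{proven} for the root, Corollary \ref{p-mod}.1 plus the justification axioms \eqref{A4}--\eqref{A6} inside the maxiconsistent end-sets for closure, and \eqref{T1} together with the $K$-clause of $R$ and reflexivity from \eqref{A7} for monotonicity. I see no gap in your argument.
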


\subsection{$Act$ and $V$}

It remains to define $Act$ and $V$ for our canonical model, and we
define them as follows:
\begin{itemize}
    \item $(m,h) \in V(p) \Leftrightarrow p \in end(m \sqcap h)$,
    for all $p \in Var$;
    \item $Act(\dag, h) = \emptyset$ for all $h \in Hist(\mathcal{M})$;
    \item $Act(m,h) = \{ t \in Pol \mid (\exists A \in Form^{Ag}, j \in Ag)(Proven(t,A) \in end(m \sqcap
    h)\vee Prove(j,t,A) \in end(m \sqcap
    h))
    \}$, if $m \neq \dag$, $|m| = 0$ and $h \in H_m$;
    \item $Act(m,h) = \{ t \in Pol \mid (\exists A \in Form^{Ag})(Proven(t,A) \in end(m \sqcap
    g))
    \}$, if $m \neq \dag$, $|m| > 0$ and $h \in H_m$
\end{itemize}

Since in the definition of $Act$ we have used the proving
modalities not available in JA-STIT, we can no longer rely on
constructions carried over for the canonical model of
\cite{OLexpl-gen}.

We first draw some of the immediate consequences of the above
definitions:
\begin{lemma}\label{act-technical}
Assume that $m \in Tree \setminus \{ \dag \}$, $h \in H_m$, and $t
\in Pol$. Then the following statements are true:
\begin{enumerate}
\item $(\exists A \in Form^{Ag})(Proven(t,A) \in end(m \sqcap h))
\Leftrightarrow t \in Act_m$;

\item If $|m| > 0$ and $h, h' \in H_m$, then $Act(m,h) =
Act(m,h')$;

\item If $h,h' \in H_m$ and $m \sqcap h = m \sqcap h'$, then
$Act(m,h) = Act(m,h')$.
\end{enumerate}
\end{lemma}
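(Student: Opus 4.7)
The plan is to dispatch Parts 3 and 2 immediately, handle the forward direction of Part 1 directly, and spend the main effort on the backward direction when $|m| = 0$. For Part 3, observe that $m \sqcap h = m \sqcap h'$ implies $end(m \sqcap h) = end(m \sqcap h')$; since every clause of the definition of $Act(m,\cdot)$ depends only on this end set, $Act(m,h) = Act(m,h')$. For Part 2, use Corollary \ref{p-mod}.1: when $|m|>0$, $Act(m,h)$ is defined purely in terms of the membership of the moment-determinate formulas $Proven(t,A)$ in $end(m \sqcap h)$, so the set is the same for any $h \in H_m$.

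For the forward direction of Part 1, Corollary \ref{p-mod}.1 upgrades $Proven(t,A) \in end(m \sqcap h)$ to $Proven(t,A) \in end(m \sqcap g)$ for every $g \in H_m$, and the definition of $Act$ then puts $t$ into every $Act(m,g)$. The backward direction for $|m|>0$ is immediate from the definition of $Act(m,h)$ in that case.

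The nontrivial subcase is the backward direction when $|m|=0$. I would argue by contradiction: assume no $A$ satisfies $Proven(t,A) \in end(m \sqcap h)$. Using Corollary \ref{p-mod}.1 together with the biconditional $\neg Proven(t,A) \leftrightarrow \Box \neg Proven(t,A)$ (derivable from \eqref{T4} by S5 for $\Box$), the assumption propagates to every $g \in H_m$ and every $A$, so every witness for $t \in Act(m,g)$ must be of the $Prove(i,t,A)$ kind. I would then attempt to build a witness-free $g^*$ by Lindenbaum-extending
\[
\Sigma := \{\Box C : \Box C \in end(m \sqcap h)\} \cup \{\neg Prove(i,t,A) : i \in Ag,\; A \in Form^{Ag}\}
\]
to a $\mathcal{CS}$-maxiconsistent $\Delta$, then invoking Lemma \ref{element} and Corollary \ref{c-sqcap} to realise $\Delta$ as $m \sqcap g^*$ for some $g^* \in H_m$. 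This $g^*$ would contradict $t \in Act_m$.

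The hard part will be the consistency of $\Sigma$. If it failed, one would extract $\vdash \Box C \to \bigvee_{l=1}^n \bigvee_{i \in Ag} Prove(i,t,A_l)$ with $\Box C \in end(m \sqcap h)$. I plan to close this by first deriving the auxiliary theorem $Prove(i,t,A) \to \neg Proven(t,B)$ for every $B$ (a short chain through \eqref{B9}, \eqref{B10}, \eqref{B11}), which rewrites the implication as $\vdash \Box C \to \bigvee_l \neg Proven(t,A_l)$; upgrading via $KC \to \Box C$ (\eqref{T0}) and applying rule \eqref{S4} to obtain $\vdash KC \to \bigvee_l \bigwedge_{i \in Ag} \neg Prove(i,t,A_l)$; and then iterating the Lindenbaum construction, using \eqref{T5} to prune the finite list $\{A_1,\ldots,A_n\}$ at each step, until a direct contradiction is reached. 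This iterative narrowing over a finite list, rather than the extraction of a single derivation, is where I expect the technically most demanding bookkeeping, in particular the need to keep a suitable $K$-antecedent living in $end(m \sqcap h)$ as the list shrinks.
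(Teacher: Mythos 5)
Parts 2 and 3, the forward direction of Part 1, and the reduction of the backward direction to the case $|m|=0$ in which every history $g \in H_m$ carries only a $Prove$-witness for $t$ all coincide with the paper's proof. The divergence, and the gap, is in how you close that last case. You want to manufacture a fresh history $g^*$ through $m$ whose end set contains $\neg Prove(i,t,A)$ for \emph{all} $i$ and $A$, which stands or falls with the consistency of
$$
\Sigma = \{\Box C \mid \Box C \in end(m\sqcap h)\}\cup\{\neg Prove(i,t,A) \mid i\in Ag,\ A\in Form^{Ag}\}.
$$
Your plan for that consistency claim routes through rule \eqref{S4}, and this is where it breaks: \eqref{S4} only yields conclusions with antecedent $KC$, and nothing puts $KC$ (as opposed to $\Box C$) into $end(m\sqcap h)$. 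The first component of $\Sigma$ is forced to consist of $\Box$-formulas (that is what Lemma \ref{element} and Corollary \ref{c-sqcap} need to realise $\Delta$ as $m\sqcap g^*$), $K$ is only S4, and $\neg Proven(t,A)\to K\neg Proven(t,A)$ is not a theorem, so no $K$-antecedent is ever available to discharge. The ``iterative pruning'' you gesture at does not supply one.

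The consistency of $\Sigma$ is in fact provable, but by a different engine: from $\vdash \Box C \to \bigvee_s Prove(i_s,t,B_s)$ with $\Box C\in end(m\sqcap h)$, maxiconsistency puts some $Prove(i_{s_0},t,B_{s_0})$, hence by \eqref{B9} and \eqref{T2} also $\Box t\co B_{s_0}$, into $end(m\sqcap h)$; \eqref{B10} gives $\vdash (\Box C\wedge t\co B_{s_0})\to\bigvee_s Prove(i_s,t,B_{s_0})$, now a disjunction over a \emph{single} formula; necessitating and distributing $\Box$ then contradicts \eqref{T5}. This is precisely the engine of the paper's own proof, which moreover skips the Lindenbaum detour altogether: it takes the witnesses $Prove(j_g,t,A_g)$ already sitting on the actual histories $g\in H_m$, unifies them to $Prove(j_g,t,A_h)$ via \eqref{B9} and \eqref{B10}, uses the finiteness of $Ag$ together with Lemma \ref{element-new}.1 and Corollary \ref{c-sqcap} to place $\Box\bigvee_g Prove(j_g,t,A_h)$ into $end(m\sqcap h)$, and contradicts \eqref{T5}. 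So your architecture could be repaired, but the one step it actually hinges on is argued with the wrong tool; \eqref{S4} belongs to the proof of Lemma \ref{elementcontinuation}, not here.
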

\begin{proof}
(Part 1). Assume that for some $A \in Form^{Ag}$ we have
$Proven(t,A) \in end(m \sqcap h)$. Then, by Corollary
\ref{p-mod}.1, we must also have $Proven(t,A) \in end_m$ whence,
by definition of $Act$, it follows that $t \in Act_m$. In the
other direction, let $t \in Act_m$. If for some $g \in H_m$ and
some $A \in Form^{Ag}$ we have $Proven(t,A) \in end(m \sqcap g)$,
then, by Corollary \ref{p-mod}.1, we must also have $Proven(t,A)
\in end_m$, whence also $Proven(t,A) \in end(m \sqcap h)$.
Otherwise, we obtain a contradiction.

Indeed, if for no $g \in H_m$ and $A \in Form^{Ag}$ we have
$Proven(t,A) \in end(m \sqcap g)$, then, by definition of $Act$,
for every $g \in H_m$ there must be an $A_g \in Form^{Ag}$ and a
$j_g \in Ag$ such that $Prove(j_g, t, A_g) \in end(m \sqcap g)$.
Now, consider $A_h$. We have $Prove(j_h, t, A_h) \in end(m \sqcap
h)$, therefore, by \eqref{B9} and $\mathcal{CS}$-maxiconsistency
of $end(m \sqcap h)$, we must also have $t\co A_h \in end(m \sqcap
h)$. It follows by  Lemma \ref{p-mod}.1, that $t\co A_h \in
end_m$. Therefore, given the $\mathcal{CS}$-maxiconsistency of end
sets in elements, we must have $Prove(j_g, t, A_g)\wedge t\co A_h
\in end_m$. By \eqref{B10}, we further get that $Prove(j_g, t,
A_h) \in end_m$. Note that $\{j_g \mid g \in H_m \} \subseteq Ag$
and hence must be finite. Therefore $\bigvee_{g \in
H_m}(Prove(j_g,t,A_h))$ is in fact a finite disjunction, and,
again using $\mathcal{CS}$-maxiconsistency of of end sets in
elements, we obtain that $\bigvee_{g \in H_m}Prove(j_g,t,A_h) \in
end_m$, whence, by Lemma \ref{element-new}.1 and Corollary
\ref{c-sqcap}, it follows that $\Box(\bigvee_{g \in
H_m}(Prove(j_g,t,A_h)) \in end(m \sqcap h)$. By \eqref{T5}, the
latter is in contradiction with $\mathcal{CS}$-maxiconsistency of
$end(m \sqcap h)$, which finishes the proof of Part 1.

(Part 2). In the assumptions of this part, we get, by Corollary
\ref{p-mod}.1, that:

\begin{align*}
t \in Act(m,h) &\Leftrightarrow (\exists A \in
Form^{Ag})(Proven(t,A) \in end(m
\sqcap h))\\
&\Leftrightarrow (\exists A \in Form^{Ag})(Proven(t,A) \in
end_m)\\
&\Leftrightarrow (\exists A \in Form^{Ag})(Proven(t,A) \in end(m
\sqcap h'))\\
&\Leftrightarrow t \in Act(m,h'),
\end{align*}

for an arbitrary $t \in Pol$.

(Part 3). Note that $Act(m,h)$ and $Act(m,h')$ are fully
determined by $end( m \sqcap h)$ and $end( m \sqcap h')$,
respectively, and that, by our assumptions, we must have $end( m
\sqcap h) = end( m \sqcap h')$.
\end{proof}

We now check that the remaining semantic constraints on jstit
models are satisfied:

\begin{lemma}\label{act}
$\mathcal{M}^{Ag}_{\mathcal{CS}}$ satisfies the constraints as to
the expransion of presented proofs, no new proofs guaranteed,
presenting a new proof makes histories divide, and epistemic
transparency of presented proofs.
\end{lemma}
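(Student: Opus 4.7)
The plan is to treat the four constraints in turn, relying on Lemma \ref{act-technical} and the $Prove$-to-$Proven$ clause from the definition of an element, with the $m = \dag$ and $m' = \dag$ cases disposed of by $Act(\dag, h) = \emptyset$ or by the exclusion in the definition of $R$.

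For expansion of presented proofs, assume $m' \lhd m$ with $m' \neq \dag$, fix $h \in H_m \subseteq H_{m'}$, and take $t \in Act(m', h)$. I split on the two clauses defining $\lhd$. If $\overrightarrow{m} = \overrightarrow{m'}$ (so $|m| < |m'|$, forcing $|m'| > 0$), then $m \sqcap h$ and $m' \sqcap h$ are both the unique element of $\overrightarrow{m} \cap {]h[}$, their end-sets coincide, and the inclusion is immediate. Otherwise $m' \sqcap h$ is a proper initial segment of $m \sqcap h$; writing $m \sqcap h = (\Gamma_1, \dots, \Gamma_k, \Gamma_{k+1}, \dots, \Gamma_n)$ and $m' \sqcap h = (\Gamma_1, \dots, \Gamma_k)$, a witness $Prove(j, t, A) \in \Gamma_k$ (possible only when $|m'| = 0$) produces $Proven(t, A) \in \Gamma_{k+1}$ by the element definition, while a witness $Proven(t, A) \in \Gamma_k$ already has the desired shape. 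In either case $Proven(t, A)$ propagates from $\Gamma_i$ to $\Gamma_{i+1}$ by applying in succession \eqref{B11} (to obtain $KProven(t, A) \in \Gamma_i$), the $K$-continuation clause of the element definition (to pass $KProven(t, A)$ into $\Gamma_{i+1}$), and the $T$ axiom in \eqref{A7} (to recover $Proven(t, A) \in \Gamma_{i+1}$). Iterating up to $\Gamma_n$ yields $Proven(t, A) \in end(m \sqcap h)$, so $t \in Act(m, h)$ regardless of whether $|m| = 0$ or $|m| > 0$.

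For no new proofs guaranteed, the case $m = \dag$ is vacuous; otherwise take $m' = (\overrightarrow{m}, |m|+1)$, so that $m' \lhd m$ by the same-core clause and $m' \sqcap h = m \sqcap h$ for every $h \in H_m$. Any $t \in Act_m$ admits, by Lemma \ref{act-technical}.1, a witness $Proven(t, A) \in end(m \sqcap h) = end(m' \sqcap h)$; since $|m'| > 0$, this gives $t \in Act(m', h)$. For presenting a new proof makes histories divide, the case $m = \dag$ is trivial and the case $|m| > 0$ is Lemma \ref{act-technical}.2. When $|m| = 0$ and $h \approx_m h'$ is witnessed by $m'' \rhd m$ with $h, h' \in H_{m''}$, the relation $m \lhd m''$ must fall under the different-cores clause (the same-core alternative being ruled out by $|m| = 0$); then $m \sqcap h$ and $m \sqcap h'$ are the length-$\mathrm{len}(m)$ proper initial segments of $m'' \sqcap h$ and $m'' \sqcap h'$, which both lie in $\overrightarrow{m''}$ and therefore agree on all but their last entry. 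Hence $m \sqcap h = m \sqcap h'$, and Lemma \ref{act-technical}.3 concludes.

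For epistemic transparency, suppose $R(m, m')$; the case $m = \dag$ is vacuous and $m' = \dag$ is excluded by the definition of $R$. For any $t \in Act_m$, Lemma \ref{act-technical}.1 supplies $A$ with $Proven(t, A) \in end_m$; \eqref{B11} promotes this to $KProven(t, A) \in end_m$; the definition of $R$ transfers $KProven(t, A)$ into $end_{m'}$; and the $T$ axiom in \eqref{A7} delivers $Proven(t, A) \in end_{m'}$. A further application of Lemma \ref{act-technical}.1 then gives $t \in Act_{m'}$. The main obstacle is the different-cores subcase of expansion: one has to bridge the one-step $Prove$-to-$Proven$ jump supplied by the element definition with a many-step propagation of $Proven$ through a possibly long tail of element entries, for which the combined use of \eqref{B11}, the $K$-continuation clause, and the $T$ axiom for $K$ is indispensable.
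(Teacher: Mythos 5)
Your proof is correct and follows essentially the same route as the paper's: the same case split on $\dag$ versus standard moments, the same use of Lemma \ref{act-technical}, the element-definition clauses, \eqref{B11} plus the $K$-continuation plus the $T$ axiom of \eqref{A7} for propagating $Proven$, and the definition of $R$ for epistemic transparency. If anything, you are slightly more explicit than the paper in treating the same-core subcase of the expansion constraint and in ruling out the same-core alternative in the histories-divide argument.
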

\begin{proof}
We consider the \textbf{expansion of presented proofs} first. Let
$m' \lhd m$ and let $h \in H_m$. If $m' = \dag$, then  we have
$Act(\dag, h) = \emptyset$, so that the expansion of presented
proofs holds. If $m' \neq \dag$, then $m$ is also standard.
Consider then $m' \sqcap h$ and $m \sqcap h$. Both these elements
must be in the basic sequence $]h[$, therefore, one of them must
be an initial segment of another. By $m' \lhd m$ we know that $m'
\sqcap h$ must be a proper initial segment of $m \sqcap h$. So we
may assume that $m' \sqcap h = (\Gamma_1,\ldots, \Gamma_k)$ and $m
\sqcap h = (\Gamma_1,\ldots, \Gamma_n)$ for some appropriate
$\Gamma_1,\ldots, \Gamma_n \subseteq Form^{Ag}$ and $n > k$. Now,
if $t \in Act(m',h)$, then for some $A \in Form^{Ag}$ we must have
either that $Prove(j,t,A) \in end(m' \sqcap h) = \Gamma_k$ for
some $j \in Ag$, or that $Proven(t,A) \in end(m' \sqcap h)$. In
the latter case we will also have $KProven(t,A) \in end(m' \sqcap
h)$ by \eqref{B11} and $\mathcal{CS}$-maxiconsistency of
$\Gamma_k$. Then, since $(\Gamma_1,\ldots, \Gamma_n)$ is an
element, we must have $KProven(t,A) \in \Gamma_n$, whence, by
\eqref{A7} and $\mathcal{CS}$-maxiconsistency of $\Gamma_n$, we
further obtain that $Proven(t,A) \in \Gamma_n = end(m \sqcap h)$.
Hence we must have $t \in Act(m,h)$.

In the former case we also invoke the fact that $(\Gamma_1,\ldots,
\Gamma_n)$ is an element, which in this case directly entails that
$Proven(t,A) \in \Gamma_{k+1}$ and, given that $k + 1 \leq n$, the
rest is the same as in the previous case.

We consider next the \textbf{no new proofs guaranteed} constraint.
Let $m \in Tree$. If $m = \dag$, then $Act_m = \bigcup_{m' \lhd m,
h \in H_m}(Act(m',h)) = \emptyset$ and the constraint is trivially
satisfied. Assume that $m \neq \dag$ and that $t \in Act_m$ and
choose an arbitrary $h \in H_m$. Consider $m \sqcap h$, say $m
\sqcap h =(\Gamma_1,\ldots, \Gamma_n)$. We get then that $m =
([(\Gamma_1,\ldots, \Gamma_n)]_\equiv, k)$ for some $k \in
\omega$. By Lemma \ref{act-technical}.1, we further obtain that
for some $A \in Form^{Ag}$ we will have $Proven(t,A) \in
\Gamma_n$. Now, consider $m' = ([(\Gamma_1,\ldots,
\Gamma_n)]_\equiv, k+1)$. We clearly have $m' \lhd m$, therefore,
by Lemma \ref{technical}.3 $h \in H_{m'}$. Moreover, it is clear
that $m' \sqcap h$ also equals $(\Gamma_1,\ldots, \Gamma_n)$, so
that we get $Proven(t,A) \in \Gamma_n = end(m' \sqcap h)$, whence
$t \in Act(m',h)$ as desired.

We turn next to the \textbf{presenting a new proof makes histories
divide} constraint. Consider $m, m' \in Tree$ such that $m \lhd
m'$ and arbitrary $h, h' \in H_{m'}$. We immediately get then that
$h, h' \in H_{m}$.  If $m = \dag$, then the constraint is verified
trivially. If $m \neq \dag$, then we have two cases to consider:

\emph{Case 1}. $\overrightarrow{m} = \overrightarrow{m'}$ and $|m|
> |m'|$. Then we must have $|m| > 0$, and by Lemma \ref{act-technical}.2 it follows
that in this case for all $h, h' \in H_m$ we will have $Act(m, h)
= Act(m,h')$ so that the constraint is verified.

\emph{Case 2}. There is a $\xi \in \overrightarrow{m}$ such that
$\xi$ is a proper initial segment of every $\tau \in
\overrightarrow{m'}$. Consider then $m' \sqcap h$ and $m' \sqcap
h'$. These are elements in $\overrightarrow{m'}$, and hence $\xi$
is a proper initial segment of both $m' \sqcap h$ and $m' \sqcap
h'$. Moreover, we know that $m \sqcap h$ is also in $]h[$ and
hence must be an initial segment of $m' \sqcap h$  of the length
equal to the length of $\xi$. The same holds for $m \sqcap h'$ and
$m' \sqcap h'$, respectively. It follows that $m \sqcap h = m
\sqcap h' = \xi$ whence, by Lemma \ref{act-technical}.3, we
immediately get $Act(m,h) = Act(m,h')$.

It remains to check the \textbf{epistemic transparency of
presented proofs} constraint. Assume that $m, m' \in Tree$ are
such that $R(m,m')$. If we have $m = \dag$, then, by definition,
we must have $Act_m = \emptyset$, and the constraint is verified
in a trivial way. If, on the other hand, $m \neq \dag$, then, by
$R(m,m')$, we must also have $m' \neq \dag$. Assume that $t \in
Act_m$. Then, by Lemma \ref{act-technical}.1, we also have
$Proven(t,A) \in end(m \sqcap h)$ for some $A \in Form^{Ag}$. By
$R(m,m')$ and Corollary \ref{c-sqcap}, it follows that
$Proven(t,A) \in \bigcap_{g \in H_{m'}}end(m \sqcap g)$, whence,
again by Lemma \ref{act-technical}.1, we know that $t \in
Act_{m'}$.
\end{proof}

\subsection{The truth lemma}
It follows from Lemmas \ref{leq}, \ref{choice}, \ref{e}, and
\ref{act}, that our above-defined canonical model
$\mathcal{M}^{Ag}_{\mathcal{CS}}$ is a unirelational jstit model
for $Ag$. By Lemma \ref{proven} we know that
$\mathcal{M}^{Ag}_{\mathcal{CS}}$ is $\mathcal{CS}$-normal. Now we
need to supply a truth lemma:
\begin{lemma}\label{truth}
Let $A \in Form$, let $m \in Tree \setminus \{ \dag \}$ be such
that $|m| = 0$, and let $h \in H_m$. Then:
$$
\mathcal{M}^{Ag}_{\mathcal{CS}},m,h \models A \Leftrightarrow A
\in end(m \sqcap h).
$$
\end{lemma}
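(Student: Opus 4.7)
\emph{Plan.} The proof goes by induction on the complexity of $A$, handling each case using the lemmas already accumulated for the canonical model. The base case $A = p$ is immediate from the definition of $V$, and the Boolean cases are routine from maxiconsistency of $end(m \sqcap h)$. For $\Box A$, applying Lemma \ref{element-new}.1 to $A$ together with Corollary \ref{c-sqcap} converts $\Box A \in end(m \sqcap h)$ into ``$A \in end(m \sqcap h')$ for every $h' \in H_m$'', which the inductive hypothesis closes. For $[j]A$, Lemma \ref{element-new-stit} (whose $|m| = 0$ precondition matches the one in our statement) plays the analogous role, yielding ``$A \in end(m \sqcap g)$ for all $g \in Choice^m_j(h)$''. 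For $Proven(t,A)$, decompose the semantic clause into $t \in Act_m$ and $\mathcal{M},m,h \models t\co A$: Lemma \ref{act-technical}.1 matches the first with the syntactic presence of some $Proven(t,B)$ in $end(m \sqcap h)$, and the second reduces to the $t\co$ subcase. For $Prove(j,t,A)$, decompose into its three conjuncts and invoke the definition of $Act$ at height-zero moments (which explicitly contains the $Prove$-contributions), Corollary \ref{p-mod}.2 for $[j]$-propagation, and Lemma \ref{act-technical}.1 for $t \notin Act_m$.

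The $KA$ and $t\co A$ cases are the most delicate, as both require evaluating subformula truth at arbitrary $R$-successors. For $KA$ $(\Rightarrow)$ (contrapositive): assuming $KA \notin end(m \sqcap h)$, I would extend $\{ KB \mid KB \in end(m \sqcap h)\} \cup \{\neg A\}$ to a maxiconsistent $\Delta$ (consistency by $S4$ reasoning for $K$), take the length-one element $(\Delta)$, and set $m^{*} = ([(\Delta)]_{\equiv}, 0)$; the relation $R(m, m^{*})$ follows from Lemma \ref{element-new}.2 applied to each $KB$ together with \eqref{T3}, and picking $h^{*}$ with $m^{*} \sqcap h^{*} = (\Delta)$ by Lemma \ref{sqcap} lets the IH conclude $\mathcal{M}, m^{*}, h^{*} \not\models A$. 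For the $(\Leftarrow)$ direction, $R(m, m')$ combined with $KA \in end(m \sqcap h)$ gives $KA \in end_{m'}$ by the definition of $R$, hence $A \in end(m' \sqcap h')$ by \eqref{A7}; at a height-zero $m'$, the IH finishes immediately. The $t\co A$ case proceeds in parallel, with the $\mathcal{E}$-conjunct handled by the definition of $\mathcal{E}$ and the $R$-universal conjunct reducing to the $K$ case via \eqref{A5}.

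The main obstacle is the $(\Leftarrow)$ direction of $KA$ (and similarly $t\co A$) at $R$-successors $m'$ of positive height, where the IH cannot be applied directly and where certain formulas, notably $Prove(j, t, B)$, are semantically false even when present in $end(m' \sqcap h')$. The plan to address this is to pass to the companion height-zero moment $m'' = (\overrightarrow{m'}, 0)$, choosing $h'' \in H_{m''}$ with $m'' \sqcap h'' = m' \sqcap h'$ via Lemma \ref{sqcap}; since the defining condition for $R$ depends only on cores, $R(m, m'')$ also holds, and by the IH $\mathcal{M}, m'', h'' \models A$. The remaining step --- transferring the satisfaction of $A$ from $(m'', h'')$ to $(m', h')$ --- I expect to justify by exploiting the moment-determinacy of the relevant subformulas (via Corollary \ref{p-mod}.1) together with the provable inconsistency of $K$-prefixed $Prove$-formulas (derivable from \eqref{B9} and \eqref{T0}), which excludes the genuinely problematic subformulas from appearing consistently under a $K$.
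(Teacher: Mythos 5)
Your overall architecture (induction on $A$, with Lemma \ref{element-new}.1 plus Corollary \ref{c-sqcap} for $\Box$, Lemma \ref{element-new-stit} for $[j]$, and Lemma \ref{act-technical}.1 for the $Act_m$-components of the proving modalities) is the same as the paper's, and your treatment of $Proven(t,B)$ essentially coincides with the paper's. But two steps do not close.

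First, in the $Prove(j,t,B)$ case you never invoke \eqref{B13}, and it is indispensable. In the direction where $Prove(j,t,B)\notin end(m\sqcap h)$ while $t\co B$ and $\neg Proven(t,B)$ lie in $end(m\sqcap h)$, you must exhibit a history $g\in Choice^m_j(h)$ with $t\notin Act(m,g)$ in order to refute $t\in Act_{(m,h,j)}$; the paper obtains such a $g$ precisely by pushing $\langle j\rangle\bigwedge_{i\in Ag}\neg Prove(i,t,B)$ (from \eqref{B13}) through Lemma \ref{element-new-stit} and then using \eqref{B10} and \eqref{B12} to rule $t$ out of $Act(m,g)$. ``Decomposing into the three conjuncts'' together with Lemma \ref{act-technical}.1 does not produce this witness, and an analogous witness is also needed in the forward direction to certify $t\notin Act_m$ semantically.

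Second, and more seriously, your resolution of the obstacle you correctly identify in the $K$ and $t\co$ cases rests on a false premise. It is true that $\vdash\neg K\,Prove(j,t,B)$ (by \eqref{A7}, \eqref{T0} and \eqref{B9}), but this only excludes a $Prove$-formula as the \emph{immediate} scope of $K$; it does not prevent $Prove$-formulas from occurring as proper subformulas of a consistent $K$-formula. For instance, $K(\neg p\to Prove(j,t,B))\wedge\neg p\wedge Prove(j,t,B)$ is satisfiable over jstit models, hence consistent, and there the $Prove$-disjunct is exactly what makes the scope of $K$ true at the actual pair. For such a scope the truth value at a positive-height successor $(m',h')$ need not agree with its value at the height-zero companion $(m'',h'')$: every $Prove(i,s,C)$ is automatically false at a positive-height moment (there $Choice^{m'}_i(h')=H_{m'}$, so $Act_{(m',h',i)}=Act_{m'}$ and the conjuncts $t\in Act_{(m',h',i)}$, $t\notin Act_{m'}$ clash), while it may well be true at $(m'',h'')$. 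Corollary \ref{p-mod}.1 does not rescue the transfer, since the scope of $K$ is an arbitrary formula rather than one of the moment-determinate shapes. So the step on which your whole $K$/$t\co$ case turns is unsupported; the paper defers these cases to the JA-STIT construction, so the burden of supplying a correct argument here is entirely on your side, and what you sketch does not discharge it.
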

\begin{proof}
As is usual, we prove the lemma by induction on the construction
of $A$. The basis of induction with $A = p \in Var$ we have by
definition of $V$, whereas Boolean cases for the induction step
are trivial. The cases for $A$ being of the form $\Box B$, $[j]B$,
$KB$ or $t\co B$ for some $j \in Ag$ and $t \in Pol$, are treated
exactly as for JA-STIT (see \cite{OLexpl-gen}, although Corollary
\ref{p-mod} already provides a clear hint for the proof of cases
$A = \Box B, [j]B$). We consider the cases for the two proving
modalities:

\emph{Case 1}. $A = Proven(t,B)$ for some $t \in Pol$. If
$Proven(t,B) \in end(m \sqcap h)$, then, by Lemma
\ref{act-technical}.1, we must have $t \in Act_m$. Furthermore, by
$\mathcal{CS}$-maxiconsistency of $end(m \sqcap h)$ and
\eqref{B11}, we must also have $t\co B \in end(m \sqcap h)$,
whence by the induction case for $t \co B$ it follows that
$\mathcal{M}^{Ag}_{\mathcal{CS}},m,h \models t\co B$. Summing this
up with $t \in Act_m$, we get that
$\mathcal{M}^{Ag}_{\mathcal{CS}},m,h \models Proven(t,B)$.

In the other direction, assume that $Proven(t,B) \notin end(m
\sqcap h)$. We have then two subcases to consider:

\emph{Case 1.1}. $t\co B \notin end(m \sqcap h)$. Then, by the
induction case for $t \co B$ it follows that
$\mathcal{M}^{Ag}_{\mathcal{CS}},m,h \not\models t\co B$, whence,
by definition of satisfaction relation, we get that
$\mathcal{M}^{Ag}_{\mathcal{CS}},m,h \not\models Proven(t,B)$.

\emph{Case 1.2}. $t\co B \in end(m \sqcap h)$. Now, if $t \in
Act_m$, then, by Lemma \ref{act-technical}.1, there must be some
$C \in Form^{Ag}$ such that $Proven(t, C) \in end(m \sqcap h)$. By
\eqref{B12} and $\mathcal{CS}$-maxiconsistency of $end(m \sqcap
h)$, this means that $Proven(t,B) \in end(m \sqcap h)$ contrary to
our assumption. The obtained contradiction shows that $t \notin
Act_m$, whence $\mathcal{M}^{Ag}_{\mathcal{CS}},m,h \not\models
Proven(t,B)$, as desired.

\emph{Case 2}. $A = Prove(j,t,B)$ for some $j \in Ag$ and $t \in
Pol$. If $Prove(j,t,B) \in end(m \sqcap h)$, then, by
$\mathcal{CS}$-maxiconsistency of $end(m \sqcap h)$ and
\eqref{B9}, we must have
\begin{equation}\label{E:fin0}
t \co B, \neg\Box Prove(j,t,B), \neg Proven(t,B) \in end(m \sqcap
h).
\end{equation}
This immediately gives us, by the induction case for $t \co B$,
that:
\begin{equation}\label{E:fin1}
    \mathcal{M}^{Ag}_{\mathcal{CS}},m,h \models t\co B.
\end{equation}
Moreover, we can infer by Corollary \ref{p-mod}.2 that
$Prove(j,t,B) \in end_{(m,h,j)}$, whence it follows, by $|m| = 0$
and the definition of $Act$, that:
\begin{equation}\label{E:fin2}
t \in Act_{(m,h,j)}.
\end{equation}
Next, we observe that $\neg\Box Prove(j,t,B) \in end(m \sqcap h)$
yields, by $\mathcal{CS}$-maxiconsistency of $end(m \sqcap h)$,
that $\Box Prove(j,t,B) \notin end(m \sqcap h)$, whence, by Lemma
\ref{element-new}.1 and Corollary \ref{c-sqcap} we get that
$Prove(j,t,B) \notin end_m$. Therefore, we choose a $g \in H_m$
such that $Prove(j,t,B) \notin end(m \sqcap g)$. By
$\mathcal{CS}$-maxiconsistency of $end(m \sqcap g)$ and
\eqref{B13} we get that:
\begin{equation}\label{E:fin3}
    \langle j\rangle\bigwedge_{i \in Ag}\neg Prove(i,t,B) \in end(m
\sqcap g).
\end{equation}
It follows by Lemma \ref{element-new-stit}, that we can choose a
$g' \in Choice^m_j(g) \subseteq H_m$ such that:
\begin{equation}\label{E:fin4}
    \bigwedge_{i \in Ag}\neg Prove(i,t,B) \in end(m
\sqcap g').
\end{equation}
We now show that assuming $t\in Act(m,g')$ leads to a
contradiction. Indeed, if $t\in Act(m,g')$, then we would have
either $Prove(i,t,C)$ or $Proven(t,C)$ in $end(m \sqcap g')$ for
some $i \in Ag$ and $C \in Form^{Ag}$. In the former case note
that, by Corollary \ref{p-mod}.1, it would follow from
\eqref{E:fin0} that $t\co B \in end(m \sqcap g')$, whence by
$\mathcal{CS}$-maxiconsistency of $end(m \sqcap g')$ and
\eqref{B10} we would further obtain that $Prove(i,t,B) \in end(m
\sqcap g')$, contrary to \eqref{E:fin4}. In the latter case we
would have $Proven(t, C)\in end(m \sqcap h)$, by $g', h \in H_m$,
and Corollary \ref{p-mod}.1. In virtue of
$\mathcal{CS}$-maxiconsistency of $end(m \sqcap h)$,
\eqref{E:fin1}, and \eqref{B12} this would further imply that
$Proven(t, B)\in end(m \sqcap h)$, in contradiction with
\eqref{E:fin0}. The obtained contradiction shows that we must
have:
\begin{equation}\label{E:fin5}
 t\notin Act(m,g').
\end{equation}
Adding up \eqref{E:fin1}, \eqref{E:fin2}, and \eqref{E:fin5},
yields that   $\mathcal{M}^{Ag}_{\mathcal{CS}},m,h \models
Prove(j,t,B)$.

In the other direction, assume that $Prove(j,t,B) \notin end(m
\sqcap h)$. We have to consider three further subcases.

\emph{Case 2.1}. $t\co B \notin end(m \sqcap h)$. Then, by
induction case for $t\co B$, it follows that
$\mathcal{M}^{Ag}_{\mathcal{CS}},m,h \not\models t\co B$, whence
by \eqref{B9} we get that $\mathcal{M}^{Ag}_{\mathcal{CS}},m,h
\not\models Prove(j,t,B)$.

\emph{Case 2.2}. $t\co B, Proven(t,B) \in end(m \sqcap h)$. Then
it follows, by Case 1, that $\mathcal{M}^{Ag}_{\mathcal{CS}},m,h
\models Proven(t,B)$, whence, again by \eqref{B9}, we get that
$\mathcal{M}^{Ag}_{\mathcal{CS}},m,h \not\models Prove(j,t,B)$.

\emph{Case 2.3}. $t\co B, \neg Proven(t,B)  \in end(m \sqcap h)$.
Then, by Corollary \ref{p-mod}.1 and
$\mathcal{CS}$-maxiconsistency of $end(m \sqcap h)$, it follows
that $t\co B \in end_m$. Now, observe that $Prove(j,t,B) \notin
end(m \sqcap h)$ implies, by $\mathcal{CS}$-maxiconsistency of
$end(m \sqcap h)$ and \eqref{B13}, that:
\begin{equation}\label{E:fin6}
    \langle j\rangle\bigwedge_{i \in Ag}\neg Prove(i,t,B) \in end(m
\sqcap h).
\end{equation}
It follows by Lemma \ref{element-new-stit}, that we can choose a
$g \in Choice^m_j(h)$ such that:
\begin{equation}\label{E:fin7}
    \bigwedge_{i \in Ag}\neg Prove(i,t,B) \in end(m
\sqcap g).
\end{equation}
We now show that assuming $t\in Act(m,g)$ leads to a
contradiction. Indeed, if $t\in Act(m,g)$, then we would have
either $Prove(i,t,C)$ or $Proven(t,C)$ in $end(m \sqcap g)$ for
some $i \in Ag$ and $C \in Form^{Ag}$. In the former case note
that we also have $t\co B \in end(m \sqcap g)$, whence by
$\mathcal{CS}$-maxiconsistency of $end(m \sqcap g)$ and
\eqref{B10} we would further obtain that $Prove(i,t,B) \in end(m
\sqcap g')$, contrary to \eqref{E:fin7}. In the latter case, again
using $t\co B \in end(m \sqcap g)$, we would have $Proven(t, B)\in
end(m \sqcap g)$, by $\mathcal{CS}$-maxiconsistency of $end(m
\sqcap g)$ and \eqref{B12}, which, by Corollary \ref{p-mod}.1 and
the fact that $g \in H_m$ would mean that $Proven(t, B)\in end(m
\sqcap h)$, making $end(m \sqcap h)$ $\mathcal{CS}$-inconsistent.
The obtained contradiction shows that we must have:
\begin{equation}\label{E:fin9}
 t\notin Act(m,g),
\end{equation}
and since $g \in Choice^m_j(h)$, this immediately implies that
$\mathcal{M}^{Ag}_{\mathcal{CS}},m,h \not\models Prove(j,t,B)$, as
desired.

This finishes the list of the modal induction cases at hand, and
thus the proof of our truth lemma is complete.
\end{proof}

\section{The main result and conclusions}\label{main}

We are now in a position to prove Theorem \ref{completeness}. The
proof proceeds as follows. One direction of the theorem was proved
as Corollary \ref{c-soundness}. In the other direction, assume
that $\Gamma \subseteq Form^{Ag}$ is $\mathcal{CS}$-consistent.
Then $\Gamma$ can be extended to a $\mathcal{CS}$-maxiconsistent
$\Delta$. But then consider $\mathcal{M}^{Ag}_{\mathcal{CS}} =
\langle Tree, \unlhd, Choice, Act, R, \mathcal{E}, V\rangle$, the
canonical model defined in Section \ref{canonicalmodel}. It is
clear that $(\Delta)$ is an element, therefore $m =
([(\Delta)]_\equiv, 0) \in Tree$. By Lemma \ref{sqcap}, there is a
history $h \in H_m$ such that $(\Delta) = ([(\Delta)]_\equiv, 0)
\sqcap h$. For this $h$, we will also have $\Delta =
end(([(\Delta)]_\equiv, 0) \sqcap h)$. By Lemma \ref{truth}, we
therefore get that:
$$
\mathcal{M}, ([(\Delta)]_\equiv, 0), h \models \Delta \supseteq
\Gamma,
$$
and thus $\Gamma$ is shown to be satisfiable in a
$\mathcal{CS}$-normal jstit unirelational model, hence in a normal
jstit model.

\textbf{Remark}. Note that the canonical model used in this proof
is universal in the sense that it satisfies every subset of
$Form^{Ag}$ which is $\mathcal{CS}$-consistent.

As an obvious corollary of Theorem \ref{completeness} we get the
compactness property.

Thus, building up on an earlier work on justification stit
formalisms, we have defined the explicit fragment of basic jstit
logic introduced in \cite{OLWA}. For this logic we have presented
a strongly complete axiomatization which is stable relative to
extensions with constant specifications. This result is similar to
the completeness theorem obtained earlier for JA-STIT in
\cite{OLexpl-gen} and also borrows from this paper some techniques
and results related to the construction of canonical model. We
also note that Proposition \ref{proposition} proven above in
Section \ref{basic} shows that explicit jstit logic, just like
JA-STIT, can distinguish between the class of models with a
discrete temporal substructure and the general class of models,
even though it apparently has less expressive powers. We observe
that the formula $A$ used in the proof of Proposition
\ref{proposition} is clearly related to the formula $K(\neg \Box
Ex \vee \Box Ey) \to (\neg Ex \vee Ey)$ used to prove a similar
proposition for JA-STIT in \cite{OLexpl-gen}. The latter formula
was shown to admit of an easy generalization which led to an
axiomatization of JA-STIT over the class of jstit models based on
discrete time. It would be natural to look for an axiomatization
of explicit jstit logic over the same class of models. However,
this time the reduced expressive power of explicit jstit logic may
actually prove to be an obstacle, since the generalization pattern
which we applied in the case of JA-STIT does not seem to work in
the case of explicit jstit logic.

Another natural, but even more uphill, task for the future
research would be to try and attempt an axiomatization of full
basic jstit logic now that we have positive experience with
axiomatizations of both its implicit and its explicit fragment.

\section{Acknowledgements}
To be inserted.

}

\end{document}